\renewcommand*{\bibnamedash}{%
	\leavevmode\raise +0.6ex\hbox to 5.5ex{\hrulefill}.\space\space}
\newenvironment{proposition}
{\pushQED{\qed}\propositionx}
{\popQED\endpropositionx}
\newenvironment{propositionp}
{\pushQED{\qed}\propositionx}
{\popQED\endpropositionx}
\newenvironment{theorem}
{\pushQED{\qed}\theoremx}
{\popQED\endtheoremx}
\newenvironment{theoremp}
{\pushQED{\qed}\theoremx}
{\popQED\endtheoremx}
\newenvironment{corollary}
{\pushQED{\qed}\corollaryx}
{\popQED\endcorollaryx}
\newenvironment{lemma}
{\pushQED{\qed}\lemmax}
{\popQED\endlemmax}
\theoremstyle{remark}
\newtheorem*{remark}{Remark}
\newcommand{\bbB}{\mathbb{B}}
\newcommand{\bbC}{\mathbb{C}}
\newcommand{\bbE}{\mathbb{E}}
\newcommand{\bbN}{\mathbb{N}}
\newcommand{\bbP}{\mathbb{P}}
\newcommand{\bbQ}{\mathbb{Q}}
\newcommand{\bbR}{\mathbb{R}}
\newcommand{\bbS}{\mathbb{S}}
\newcommand{\bbT}{\mathbb{T}}
\newcommand{\bbZ}{\mathbb{Z}}
\newcommand{\calF}{\mathcal{F}}
\newcommand{\calI}{\mathcal{I}}
\newcommand{\calU}{\mathcal{U}}
\newcommand{\scrC}{\mathscr{C}}
\newcommand{\scrD}{\mathscr{D}}
\newcommand{\scrH}{\mathscr{H}}
\newcommand{\scrL}{\mathscr{L}}
\newcommand{\scrS}{\mathscr{S}}
\newcommand{\scrX}{\mathscr{X}}
\newcommand{\scrY}{\mathscr{Y}}
\newcommand{\bfa}{\mathbf{a}}
\newcommand{\bfb}{\mathbf{b}}
\newcommand{\bmdelta}{\bm{\delta}}
\newcommand{\bmmu}{\bm{\mu}}
\newcommand{\bmsigma}{\bm{\sigma}}
\newcommand{\dd}{\,\mathrm{d}}
\title{The microlocal irregularity of Gaussian noise}
\author{Ethan Sussman}
\email{ethanws@mit.edu}
\address{Department of Mathematics, Massachusetts Institute of Technology, Massachusetts 02139-4307, USA}
\date{November 10th, 2022 (Last Update), April 25th, 2022 (Published), December 13th, 2020 (Original)}
\subjclass[2020]{60G15, 60G60}
\begin{document}

\begin{abstract}
	The study of random Fourier series, linear combinations of trigonometric functions whose coefficients are independent (in our case Gaussian) random variables with polynomially bounded means and standard deviations, dates back to Norbert Wiener in one of the original constructions of Brownian motion \cite{Wiener}.
	A geometric generalization -- relevant e.g.\ to Euclidean quantum field theory with an infrared cutoff -- is the study of random Gaussian linear combinations of the eigenfunctions of the Laplace-Beltrami operator on an arbitrary compact Riemannian manifold $(M,g)$,  \textit{Gaussian noise} $\Phi$.

	I will prove that, when our random coefficients are independent Gaussians whose standard deviations obey polynomial asymptotics and whose means obey a corresponding polynomial upper bound, the resultant random $\scrH^s$-wavefront set $\operatorname{WF}^s(\Phi)$ (defined as a subset of the cosphere bundle $\bbS^*M$) is either almost surely empty  or almost surely the entirety of $\bbS^*M$, depending on $s \in \bbR$, and we will compute the threshold $s$ and the behavior of the wavefront set at it. Consequently, the random $C^\infty$-wavefront set $\operatorname{WF}(\Phi)$ is almost surely the entirety of the cosphere bundle. 
	The method of proof is as follows: using Sazonov's theorem and its converse, it suffices to understand which compositions of microlocal cutoffs and inclusions of $L^2$-based fractional order Sobolev spaces are Hilbert-Schmidt (HS), and the answer follows from general facts about the HS-norms of the elements of the pseudodifferential calculus of Kohn and Nirenberg. 
\end{abstract}

\maketitle

\tableofcontents

\section{Introduction}
\label{sec:introduction}

This paper gives a microlocal application of an approach  due largely to the participants of the \emph{S\'eminaire Maurey-Schwartz} \cite{SeminaireSchwartz69}, Laurent Schwartz in particular, to understanding the pathwise regularity of certain random distributions -- e.g.\ Brownian motion (a.k.a.\ the Wiener process \cite{Wiener}), the Ornstein-Uhlenbeck process, the Gaussian free field, white noise in arbitrarily many dimensions, and sufficiently small perturbations thereof. This approach
\begin{enumerate}
	\item blackboxes all of the probabilistic and measure-theoretic reasoning into Sazonov's theorem \cite{Sazonov} (or its variants, generalizations, corollaries, and converses), and then
	\item proceeds \emph{non}-probabilistically by way of classical functional analytic estimates, e.g.\ the various Sobolev embedding theorems and their various refinements. 
	\label{item:2}
\end{enumerate}
The study of the regularity of random distributions began with the $\alpha$-H\"older regularity of the Wiener process, initiated by Wiener himself. 
We consider here Sobolev regularity, which in the 1D case seems to have first been considered by Kushner \cite{Kushner}.

Partial motivation comes from constructive quantum field theory. Nelson \cite{Nelson1}\cite{Nelson2} constructed the scalar field in Euclidean spacetime, amounting to a Gaussian measure
\begin{equation} 
\Gamma_m :\operatorname{Borel}(\scrS'(\bbR^d)) \to [0,1],
\label{eq:2}
\end{equation} 
($m>0$ is the ``mass'' of the field) 
on the conuclear space $\scrS'(\bbR^d) = \scrS(\bbR^d)^*$ of tempered distributions. See \Cref{figures} for samples. Physicists typically attempt to define interacting quantum field theories via so-called path-``integrals'' --- these are ill-defined (except in some formal perturbation-theoretic sense), and attempts to mathematically rehabilitate their approach beyond the level of formal perturbation theory run into difficulty because Nelson's measure is only supported on low regularity Sobolev spaces. 
In the $d=1$ case, the regularity is not too low for the construction to make sense.  The result is the \emph{Feynman-Kac formula} \cite{SimonFunctionalIntegration}. In the $d\geq 2$ case, the singularities are more severe, and the resultant difficulties are more serious. They can, in some cases, be circumvented, but only after rather serious work \cite{JGPhi43}\cite{GJorigI}\cite{GJorigII}\cite{GJorigIII}\cite{GJorigIV} (on the Lorentzian side) and \cite{Feldman}\cite{Guerra}\cite{Sokal} (as an obviously incomplete sample of the Euclidean side) --- see \cite{SimonQFT}\cite{GlimmJaffe}\cite[\S24]{SimonFunctionalIntegration} for a summary and further references. 
The recent and extensive work of Martin Hairer \cite{HairerTheory}\cite{HairerTheory2} and collaborators  -- e.g.\ \cite{HairerCollab2}\cite{HairerCollab1}\cite{HairerCollab3}\cite{HairerCollab5}\cite{HairerCollab6}\cite{HairerCollab4} -- on the dynamical $\Phi^4_2,\Phi^4_3$ models and variants can be considered an outgrowth of these earlier investigations, albeit from the perspective of SPDE.
One of the earliest results on the $L^2$-based Sobolev regularity of the free field is \cite{Cannon74}. See also \cite{ColellaLanford}\cite{Reed74}. 

A special case of the main theorem of this paper, stated using some standard notation which is recalled in \S\ref{sec:setup}, is:
\begin{theorem}
	\label{thm:prelim}
	Let $(M,g)$ denote a compact Riemannian manifold, and let $\Gamma : \operatorname{Borel}(\scrD'(M)) \to [0,1]$ denote the Gaussian measure with covariance $(1+\triangle_g)^{-1}:\scrD(M)\to\scrD(M)$. (See \cite[Chp. 6]{GlimmJaffe}.) Then, 
	\begin{equation}
	\operatorname{WF}^s(B) = 
	\begin{cases}
	\varnothing & (s<1-d/2) \\
	\bbS^* M & (s\geq 1-d/2) \\
	\end{cases}
	\label{eq:mm}
	\end{equation}
	for $\Gamma$-almost all $B \in \scrD'(M)$. 
\end{theorem}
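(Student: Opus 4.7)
The plan follows the strategy announced in the abstract: delegate all the probability to Sazonov's theorem and its converse, then reduce to Hilbert--Schmidt estimates in the Kohn--Nirenberg calculus. Throughout, let $\{\phi_n\}_{n \geq 0}$ be an $L^2$-ONB of eigenfunctions of $\triangle_g$ with eigenvalues $\lambda_n \asymp n^{2/d}$ (Weyl's law).

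The easy direction is $s < 1 - d/2$: the Cameron--Martin inclusion $\scrH^1(M) \hookrightarrow \scrH^s(M)$ has squared HS-norm $\sum_n (1+\lambda_n)^{s-1}$, which is finite precisely in this range. Sazonov then gives $\Gamma(\scrH^s) = 1$, hence $\operatorname{WF}^s(B) = \varnothing$ $\Gamma$-a.s.

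For the direction $s \geq 1 - d/2$, fix $(x_0,\xi_0) \in \bbS^*M$ and choose a $\Psi^0$-operator $A$ with small microsupport that is elliptic at $(x_0,\xi_0)$. By microlocal elliptic regularity the events $\{AB \in \scrH^s\}$ and $\{(x_0,\xi_0) \notin \operatorname{WF}^s(B)\}$ coincide, so it suffices to show the pushforward $A_*\Gamma$ --- a Gaussian with covariance $A(1+\triangle_g)^{-1}A^*$ --- assigns measure zero to $\scrH^s$. By the Gaussian zero-one law plus the converse of Sazonov, this is equivalent to the Cameron--Martin operator $A(1+\triangle_g)^{-1/2}: L^2 \to \scrH^s$ failing to be HS, equivalently to $(1+\triangle_g)^{s/2} A (1+\triangle_g)^{-1/2} \in \Psi^{s-1}(M)$ failing to be HS on $L^2(M)$. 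Ellipticity of $A$ forces this composition to have a nonzero principal symbol at $(x_0,\xi_0)$, and the classical HS-criterion --- a $\Psi^m$-operator with non-vanishing principal symbol has kernel singularity $|x-y|^{-(d+m)}$ on the diagonal and is $L^2$ near the diagonal only when $m < -d/2$ --- rules this out in our range $m = s - 1 \geq -d/2$. Hence $(x_0,\xi_0) \in \operatorname{WF}^s(B)$ $\Gamma$-a.s. Intersecting these a.s.\ events over a countable dense subset $D \subset \bbS^*M$ and using the closedness of $\operatorname{WF}^s(B)$ in $\bbS^*M$ upgrades the conclusion to $\operatorname{WF}^s(B) = \bbS^*M$ $\Gamma$-a.s.

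The main obstacle I foresee is formulating the converse of Sazonov's theorem cleanly for Gaussians on the conuclear space $\scrD'(M)$, rather than on a Hilbert space. This is handled by first restricting everything to $\scrH^{-N}$ for $N$ large enough that $\Gamma(\scrH^{-N}) = 1$, reducing to the Hilbert setting where both the converse of Sazonov and the zero-one law are standard. The pseudodifferential HS-criterion itself is classical, but warrants a careful statement in the body of the paper to obtain the sharp threshold.
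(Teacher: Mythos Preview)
Your overall strategy is the paper's: Sazonov and its converse reduce everything to whether $(1+\triangle_g)^{s/2} A (1+\triangle_g)^{-1/2}\in\Psi^{s-1}(M)$ is Hilbert--Schmidt on $L^2(M)$, and the Kohn--Nirenberg HS criterion (the paper's \Cref{HS_criterion}) settles this by comparing the order $s-1$ to $-d/2$. The easy direction and the HS computation are fine.

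There is, however, a logical slip in the hard direction. For a single fixed $A$, the events $\{AB \in \scrH^s\}$ and $\{(x_0,\xi_0) \notin \operatorname{WF}^s(B)\}$ do \emph{not} coincide. Elliptic regularity gives only the inclusion $\{AB \in \scrH^s\} \subseteq \{(x_0,\xi_0) \notin \operatorname{WF}^s(B)\}$; the reverse fails because $(x_0,\xi_0)\notin\operatorname{WF}^s(B)$ only guarantees $\scrH^s$-regularity on \emph{some} neighborhood of $(x_0,\xi_0)$, which need not contain all of $\operatorname{WF}'(A)$. So proving $\Gamma(\{AB\in\scrH^s\})=0$ does not yield $\Gamma(\{(x_0,\xi_0)\notin\operatorname{WF}^s(B)\})=0$, and your density-plus-closedness step does not go through as written.

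The fix is to use microlocality rather than ellipticity to read off the conclusion: from $AB\notin\scrH^s$ with $\operatorname{WF}'(A)\subseteq U$ one gets only $\operatorname{WF}^s(B)\cap U\neq\varnothing$, not membership of a specific point. The paper therefore chooses a countable family $\{\chi_m\}\subset C^\infty(\bbS^*M)$ whose supports form a neighborhood basis of $\bbS^*M$, proves $\operatorname{Op}(\chi_m)B\notin\scrH^s$ a.s.\ for each $m$ (by exactly your HS argument), and concludes that $\operatorname{WF}^s(B)$ is a.s.\ dense, hence all of $\bbS^*M$ by closedness. Your argument is repaired by replacing ``$(x_0,\xi_0)\in\operatorname{WF}^s(B)$ a.s.'' with ``$\operatorname{WF}^s(B)$ meets every neighborhood in a countable basis a.s.''; equivalently, for each point of your dense set take a countable shrinking family of $A$'s rather than a single one. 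Your anticipated obstacle---formulating Sazonov on $\scrD'(M)$---is indeed handled by restricting to a sufficiently negative Sobolev space; the paper does this implicitly via the Karhunen--Lo\`eve expansion and \Cref{Sazonov}.
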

Here $\triangle_g$ is the positive semidefinite (a.k.a.\ ``geometer's'') Laplacian. Given a (not necessarily complete) probability space $(\Omega,\calF,\bbP)$, we say that some subset $A\subseteq \Omega$ occurs ``($\bbP$-)almost surely'' if there exists some set $F\in \calF$ such that $A\supseteq F$ and $\bbP(F) = 1$. In many of the cases relevant to this paper, we can take $A=F$, and this will be sufficiently clear in context that we refrain from pointing it out.
\begin{remark}
	We stated \Cref{thm:prelim} for the massive Gaussian free field, but the framework below applies to the massless free field \cite{GFF1}\cite{GFF2} (i.e.\ the Gaussian noise with ``covariance $\triangle_g^{-1}$'') as well. This is Gaussian noise which, in the language below, has
	\begin{equation} 
	\sigma_n = 
	\begin{cases}
		0 & (n=0)\\ 
		\lambda_n^{-1/2} & (n>0). 
	\end{cases}
	\end{equation}
	(Cf.\ \cite[Eq. 1]{GFF1}.)
	Consequently, \Cref{thm:main} applies to the massless case, with the same conclusion (since the leading order asymptotics of $\smash{\lambda_n^{-1}}$ and $\smash{(1+\lambda_n)^{-1}}$ for $n\to\infty$ agree, $\lambda_n$ being the $n$th eigenvalue of the Laplace-Beltrami operator). 
\end{remark}

\begin{remark}
	\Cref{thm:prelim} constructs (in a probabilistic sense) many distributions on $M$ whose wavefront sets are as large as possible. The construction of distributions with prescribed wavefront set $C\subset \bbS^* M$ (where $C$ is closed) can be found in \cite[Theorem 8.1.4]{HormanderI}, which includes the case $C=\bbS^* M$. 	
\end{remark}

The global version of \cref{eq:mm} can be found in Schwartz's treatise, \cite[Part II-Chapter V, \S3, pg.\ 280]{SchwartzRadon}, and our argument in  \S\ref{sec:main_theorem} is  essentially a microlocal refinement of his.

The connection to Euclidean QFT is as follows: even if the massive Gaussian free field were almost surely irregular somewhere, we could, following H\"ormander \cite[Chapter 8]{HormanderI}, still make direct sense of its powers if it were the case that the wavefront set $\operatorname{WF}(B)$ were almost surely one-sided, meaning that 
\begin{equation}
	(x,\hat{\xi}) \in \operatorname{WF}(B) \subset \bbS^* M \Rightarrow (x,-\hat{\xi}) \not\in \operatorname{WF}(B).
	\label{eq:osd}
\end{equation}
for $\Gamma$-almost all $B\in \scrD'(M)$. 
A reality argument suffices to show that this is not the case for $\Gamma$, but it is not obvious for arbitrary noises. 
\Cref{thm:prelim} proves that \cref{eq:osd} is almost surely as false as possible. 
In contrast, the Schwinger functions of Euclidean QFTs have or are expected to have constrained wavefront sets, and the Lorentzian version of this is an axiom of perturbative QFT on curved spacetimes --- see  \cite{Iagolnitzer}\cite{Radzikowski}\cite{Verch}\cite{Brouder:2014hta}\cite{Fredenhagen} and the references therein.

\Cref{thm:prelim} should be contrasted with quantum ergodicity \cite{Zelditch}. Quantum ergodicity applies on Riemannian manifolds whose geodesic flows are classically chaotic, but our main theorem applies under no restrictions  on $(M,g)$ whatsoever. The fact that random objects constructed out of the eigenfunctions of the Laplacian tend to have better (microlocal) equidistribution properties than deterministic ones has been known since at least \cite{Zelditch92}. The literature on quantum ergodicity is vast, and we cannot give an adequate summary here.

\begin{remark}
	It is not difficult to see that \cref{eq:mm} defines a Borel subset of $\scrD'(M)$, so that $\Gamma(\{B \in \scrD'(M):\text{\cref{eq:mm} holds}\})$ is well-defined for each $s\in \bbR$, and indeed, this is implicit in the discussion below. Moreover, $\operatorname{Borel}(\scrD'(M))$ is the $\sigma$-algebra generated by evaluation against an eigenbasis of the Laplace-Beltrami operator --- cf.\ \Cref{lem:measurability_lemma}. 
	\emph{A priori}, after writing $B$ as a random linear combination of the elements of the given eigenbasis (all of which are smooth), the Kolmogorov zero-one law tells us that
	\begin{equation}
	\Gamma(\{B\in \scrD'(M) :\text{\cref{eq:mm} holds}\}) \in \{0,1\}
	\label{eq:nn}
	\end{equation}
	for each $s$.
	\Cref{thm:prelim} tells us which of these two possibilities holds. The measure theoretic technicalities involved here will be discussed more in \S\ref{subsec:random}. 
\end{remark} 

\begin{corollary}
	\label{cor:Holder}
	Let $(M,g)$ denote a compact Riemannian manifold, and let $\Gamma:\operatorname{Borel}(\scrD'(M))\to [0,1]$ denote the Gaussian measure with covariance $(1+\triangle_g)^{-1} : \scrD(M)\to \scrD(M)$. Then, if $d\geq 2$, 
	\begin{equation}
	\chi  \in \scrC^\infty(M), \chi B \in \scrC^0(M) \Rightarrow \chi = 0
	\end{equation}
	for $\Gamma$-almost all $B\in \scrD'(M)$. That is, almost surely $B$ is nowhere locally equal to a continuous function. 
\end{corollary}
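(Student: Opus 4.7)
The plan is to specialize \Cref{thm:prelim} to the Sobolev order $s=0$. Since $d\geq 2$ we have $0 \geq 1-d/2$, so the theorem gives $\operatorname{WF}^0(B) = \bbS^* M$ for $\Gamma$-almost every $B \in \scrD'(M)$. Fix such a $B$ and suppose $\chi \in \scrC^\infty(M)$ satisfies $\chi B \in \scrC^0(M)$. Since $M$ is compact, $\scrC^0(M) \subset L^2(M) = \scrH^0(M)$, and hence $\chi B \in \scrH^0(M)$.

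The claim is that this forces $\chi \equiv 0$. Assume for contradiction that $\chi(x_0) \neq 0$ for some $x_0 \in M$. Multiplication by $\chi$ is a zeroth-order pseudodifferential operator whose principal symbol is the function $\chi(x)$ itself (independent of the covector), so it is elliptic at every point of the fiber $\bbS^*_{x_0} M$. By the standard microlocal characterization of the Sobolev wavefront set, the fact that $\chi B \in \scrH^0(M)$ then forces $\operatorname{WF}^0(B)$ to be disjoint from $\bbS^*_{x_0} M$. But $\operatorname{WF}^0(B) = \bbS^* M$ contains this entire fiber --- a contradiction. Hence $\chi$ vanishes at every point of $M$.

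There is essentially no obstacle here; the corollary is a direct consequence of \Cref{thm:prelim} combined with two elementary facts, namely the embedding $\scrC^0(M) \subset L^2(M)$ on a compact manifold and the observation that a smooth multiplication operator is microlocally elliptic wherever the multiplier does not vanish. The dimensional hypothesis $d\geq 2$ enters precisely because it is what places the threshold Sobolev order $s=0$ into the ``rough'' regime $s \geq 1-d/2$; in dimension one the same application of \Cref{thm:prelim} would instead yield $\operatorname{WF}^0(B) = \varnothing$, consistently with the familiar fact that the Gaussian free field on a compact one-dimensional manifold is almost surely (H\"older) continuous.
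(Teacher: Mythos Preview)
Your proof is correct and is precisely the argument the paper has in mind: the corollary is stated immediately after \Cref{thm:prelim} without a separate proof, and your deduction---apply \Cref{thm:prelim} at $s=0$, use $\scrC^0(M)\subset \scrH^0(M)$, and invoke microlocal elliptic regularity for the multiplication operator $M_\chi$---is exactly how it is meant to be read off.
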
 
\begin{remark} 
This corollary is expected to be sharp in the sense that, if $d=2$, $(1+\triangle_g)^{-\epsilon} B \in \scrC^0(M)$ for all $\epsilon>0$ for $\Gamma$-almost all $B\in \scrD'(M)$. This is known to hold for $M=\bbT^2$, but we do not pursue the case of general $M$ here. It should follow from $L^p$-based analogues of \Cref{thm:prelim} in conjunction with the Sobolev embedding theorems. 
\end{remark}

\begin{figure}[p]
	\centering
	\includegraphics[width = .7\textwidth]{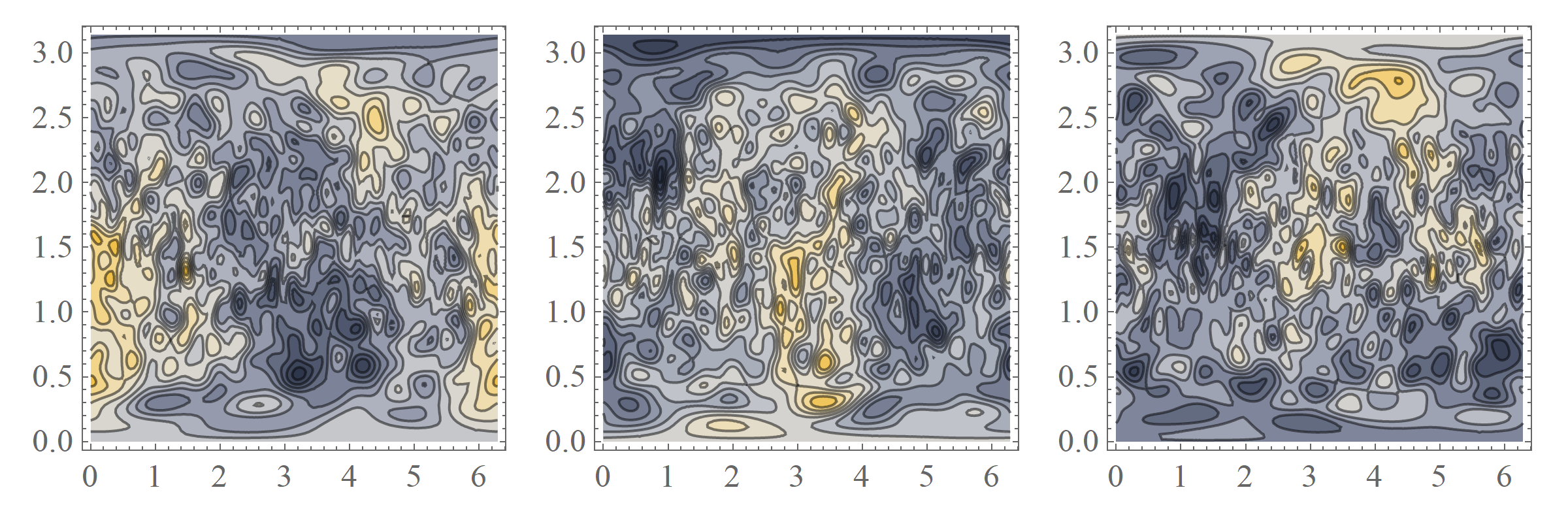}
	\includegraphics[width = .7\textwidth]{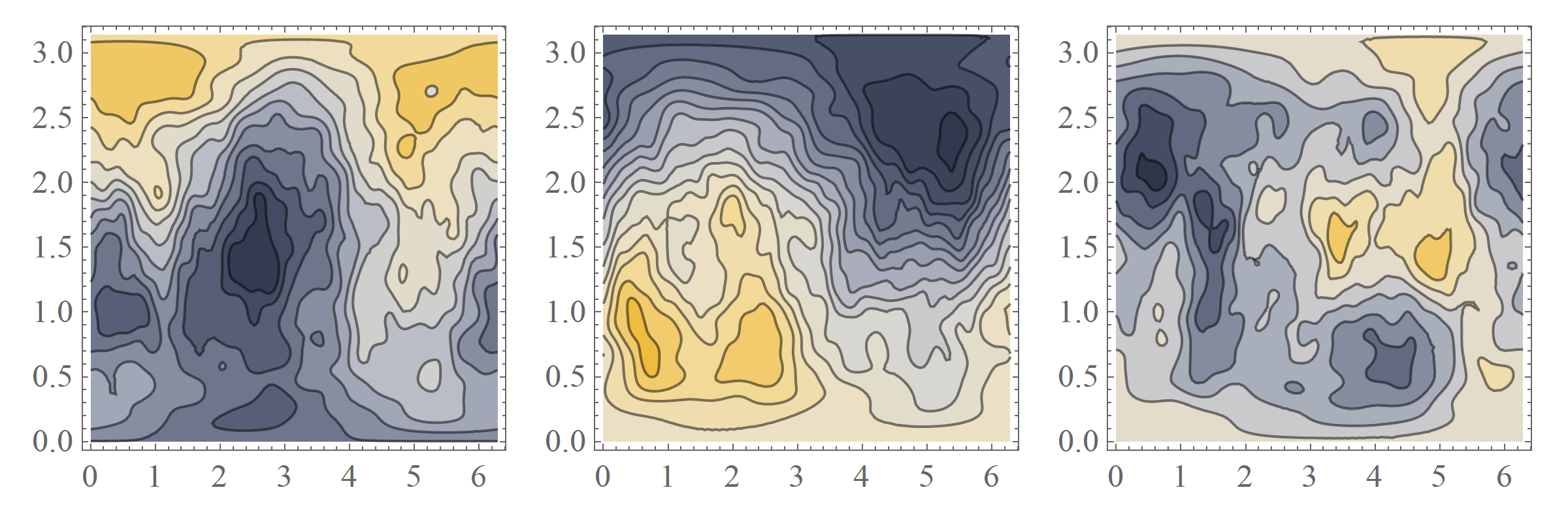}
	\includegraphics[width = .7\textwidth]{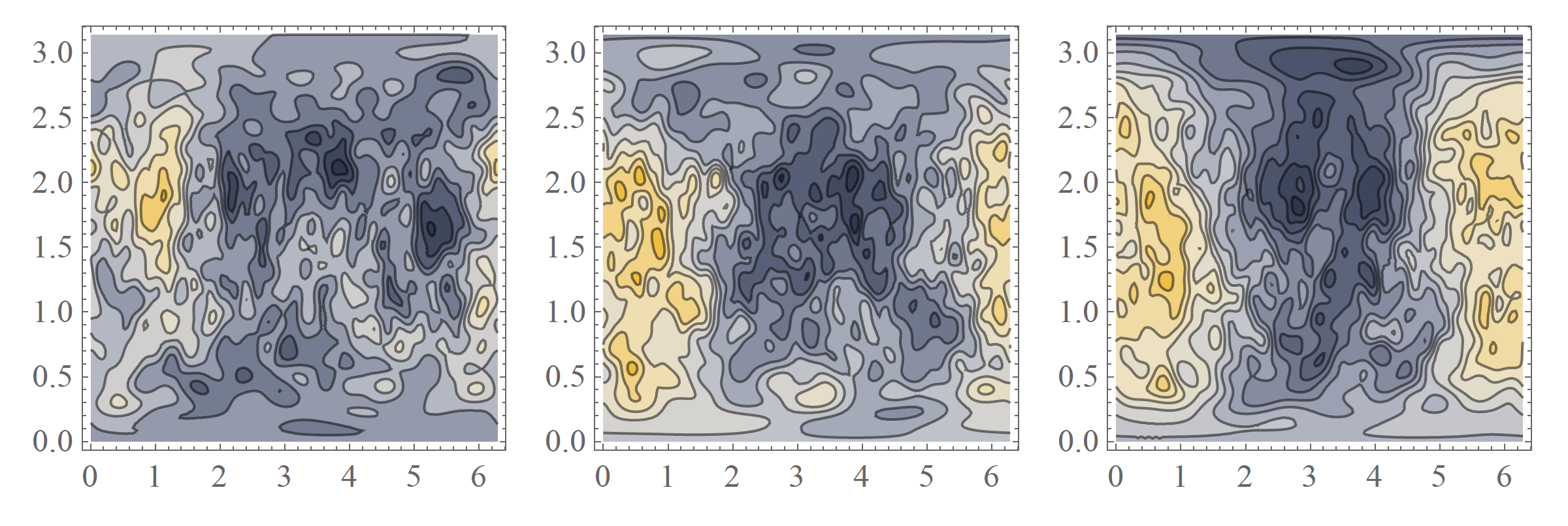}
	\includegraphics[width = .7\textwidth]{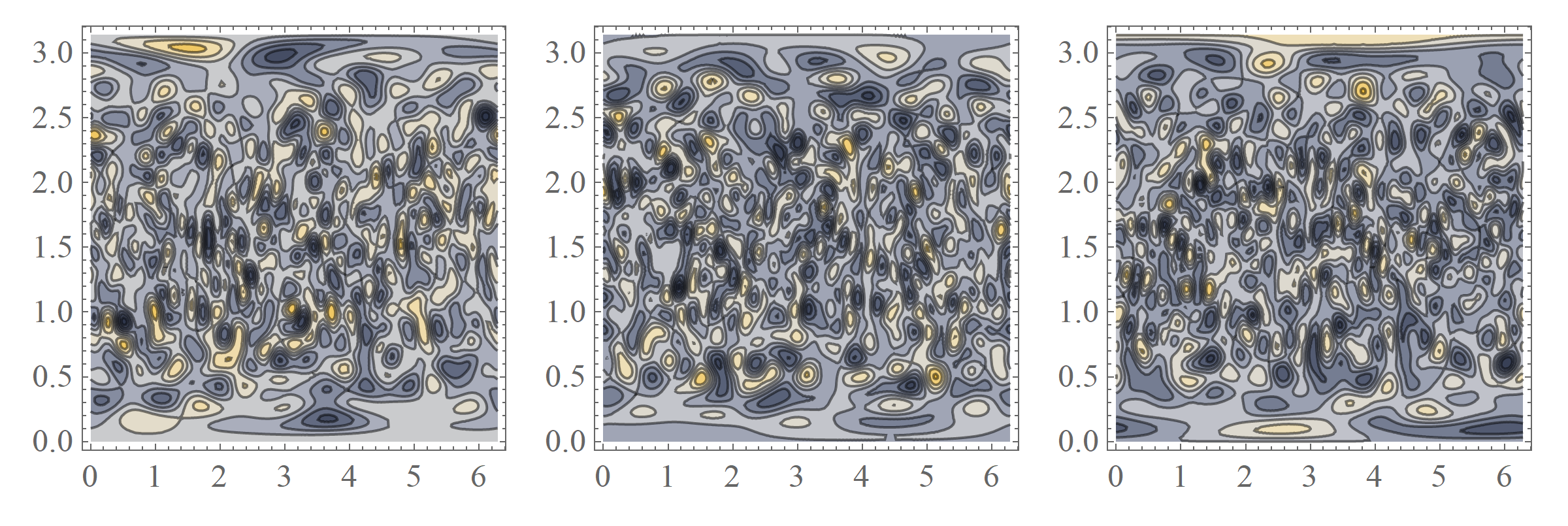}
	\caption{\textit{Top Row}: Three samples $\Phi_{40}(\omega_1),\Phi_{40}(\omega_2),\Phi_{40}(\omega_3)$ of (partial sums of) Gaussian noise on the 2-sphere $\bbS^2$ with damping $\sigma_n \sim 1/n^{1/2}$ (i.e.\ the Euclidean free field \cite{Nelson2}, on the 2-sphere).  By \Cref{thm:prelim}, $\Phi_N$ just barely fails to converge a.s.\ in $L^2(\bbS^2)$ as $N\to\infty$ but converges in all $L^2$-based Sobolev spaces of slightly negative regularity. (Plotted using an equirectangular projection.) \textit{Second Row}: Four samples $\Phi_{40}(\omega_1),\cdots,\Phi_{40}(\omega_4)$ of partial sums of Gaussian noise on the 2-sphere $\bbS^2$ with damping $\sigma_n \sim 1/n$.  By \Cref{global_regularity}, $\Phi_N$ converges in $L^2(\bbS^2)$ almost surely as $N\to\infty$. \textit{Third Row}: Three samples $\Phi_{40}(\omega_1),\Phi_{40}(\omega_2),\Phi_{40}(\omega_3)$ of partial sums of Gaussian noise on the 2-sphere $\bbS^2$ with damping $\sigma_n \sim 1/n^{2/3}$. By \Cref{global_regularity}, $\Phi_N$ converges in $L^2(\bbS^2)$ almost surely. \emph{Bottom Row}: Three samples $\Phi_{40}(\omega_1),\Phi_{40}(\omega_2),\Phi_{40}(\omega_3)$ of partial sums of $L^2(\bbS^2)$-based white noise on the 2-sphere $\bbS^2$, i.e.\ Gaussian noise with damping $\sigma_n=1$. This is the spherical analogue of the noise considered in \cite{Veraar11}. } 
	\label{figures}
\end{figure} 

Both \Cref{thm:prelim} and \Cref{cor:Holder} are not obvious from the construction of $B$ as a random series, i.e.\ the Karhunen-Lo\`eve expansion \cite{Alexanderian},  since our choice of eigenbasis $\{\phi_n\}_{n=0}^\infty\subseteq \scrL^2(M)$, $\triangle_g \phi_n = \lambda_n \phi_n$, might be inhomogeneous and anisotropic. That is to say, the $\phi_n$ are global objects --- they depend on the global geometry of $(M,g)$.

Although it is not central to our exposition (since we \emph{define} Gaussian noises to be particular random linear combinations of eigenfunctions of the Laplacian), it is worth pointing out that if we define $\Gamma$ using a covariance (as above), e.g.\ via the Bochner-Minlos theorem \cite[Part II-Chapter IV]{SchwartzRadon}\cite{SimonFunctionalIntegration}\cite{GlimmJaffe} or some other method, then one can prove that the resultant measure is the law of a random series of eigenfunctions. This can be done by proving that the random series converges in an appropriate function space, computing out the Fourier transform of the random series (which is easy for Gaussian noises), and checking that it agrees with the prescribed covariance. (The key fact here is that Fourier transforms of Radon measures on separable Banach spaces uniquely determine them, and likewise for conuclear spaces \cite[Part II-Chapter IV]{SchwartzRadon}\cite[Appendix E, E.1.16 \& E.1.17]{Hytonen2016analysis}). 

Let's now consider the special case of the circle $M=\bbS^1$, where there exists a preferred choice of eigenbasis, the trigonometric functions. 
In this case there is a simple symmetry-based argument to be found in \cite[Part I, Chp.\ 9, \S7]{KahaneFourierSeries} which shows  that the local regularity of a random formal Fourier series with symmetric and independent  coefficients -- and the Brownian bridge in particular -- does not differ from the global regularity. 
The argument is general enough to apply to both Sobolev and H\"older regularity. The details of this application are left to the reader.
It is easy to see that this local result implies the full microlocal result \cref{eq:mm} for $M=\bbS^1$.  Let $\bbC((e^{i\theta}))$ denote the $\bbC$-vector space of formal Fourier series on  $\bbS^1=\bbR_\theta/2\pi \bbZ$. We imagine for each open, connected, and nonempty $I\subseteq \bbS^1$ -- a.k.a. ``(open) interval'' -- a ``property'' $\mathtt{P}(I)$, which we'll think of as a subset $\mathtt{P}(I)\subseteq \bbC((e^{i\theta}))$, which a given formal Fourier series may or may not satisfy, and we suppose that 
\begin{enumerate}
	\item $\mathtt{P}(I)$ is a $\bbC$-subspace of $\bbC((e^{i\theta}))$ containing all trigonometric polynomials,
	\begin{equation} 
	\{a_n\}_{n\in \bbZ} , \exists N \in \bbN \text{ s.t. } a_n = 0\text{ for }|n|\geq N \Rightarrow \sum_{n\in \bbZ} a_n e^{in\theta} \in \mathtt{P}(I),
	\end{equation}
	\label{item:Kahane_i}
	\item 
	if $I_0,I_1,I_2\subseteq \bbS^1$ are intervals with $I_0\subseteq I_1 \cup I_2$, then 
	\begin{equation} 
	\texttt{P}(I_0) \supseteq \texttt{P}(I_1) \cap \texttt{P}(I_2),
	\end{equation} 
	holds,
	\label{item:Kahane_ii} 
	\item if  $\Phi(\theta) \in \bbC((e^{i\theta}))$ satisfies $\mathtt{P}(I)$, then $\Phi(\theta+\theta_0)$ satisfies $\mathtt{P}(I+\theta_0)$,  
	where the expressions `$\Phi(\theta+\theta_0)$' and `$I+\theta_0$' have the obvious meanings, 
	\begin{equation}
	\Phi(\theta) = \sum_{n\in \bbZ} a_n e^{in\theta} \Rightarrow \Phi(\theta+\theta_0) = \sum_{n\in \bbZ} (a_n e^{in\theta_0}) e^{in\theta}, 
	\end{equation}
	
	\label{item:Kahane_iii}  
	\item if $\Phi(\theta) \in \bbC((e^{i\theta}))$ satisfies $\mathtt{P}(I)$, then the two formal  series $e^{\pm i \theta} \Phi(\theta)$ satisfy $\mathtt{P}(I)$ as well, where `$e^{+i\theta} \Phi(\theta)$' and `$e^{-i\theta}\Phi(\theta)$' have the obvious meanings, 
	\begin{equation}
	\Phi(\theta) = \sum_{n\in \bbZ} a_n e^{in\theta} \Rightarrow e^{\pm i \theta} \Phi(\theta) = \sum_{n\in \bbZ} a_{n\mp 1} e^{in\theta},
	\label{eq:i}
	\end{equation}
	and lastly
	\label{item:Kahane_iv}  
	\item $\texttt{P}(I)$ is a Borel subset of $\bbC((e^{i\theta}))$ for each interval $I\subseteq \bbS^1$, where $\bbC((e^{i\theta}))$ is endowed with the product topology corresponding to the canonical isomorphism 
	\begin{equation} 
	\bbC((e^{i\theta})) \cong \prod_{n\in \bbZ} \bbC, \quad \sum_{n\in \bbZ} a_n e^{in\theta} \mapsto \{a_n\}_{n\in \bbZ}.
	\end{equation}  
	\label{item:Kahane_v}
\end{enumerate}
Certainly, most -- if not all -- reasonable local notions of regularity  satisfy these five conditions. Note that they are tailored to the circle (and flat tori):
while (\ref{item:Kahane_i}), \cref{item:Kahane_ii}), and \cref{item:Kahane_v}) could be made sense of were we to replace $\bbS^1$ by an arbitrary Riemannian manifold -- namely by replacing the trigonometric functions by an arbitrary orthonormal eigenbasis of the Laplace-Beltrami operator -- items \cref{item:Kahane_iii}) and \cref{item:Kahane_iv}) would then have no clear analogue. Item \cref{item:Kahane_v}) can be rephrased slightly by noting that the Borel $\sigma$-algebra of $\bbC((e^{i\theta}))$ is given by 
\begin{equation}
\operatorname{Borel}(\bbC((e^{i\theta}))) = \sigma(a_n : n\in \bbZ) , 
\label{eq:v}
\end{equation}
where `$a_n$' is shorthand for the map $\sum_{n\in \bbZ} a_n e^{in\theta} \mapsto a_n$. The notation on the right-hand side of \cref{eq:v}) denotes the $\sigma$-algebra generated by the given countable set of functions. 

A \emph{random} formal Fourier series consists of the following data: a probability space $(\Omega,\calF,\bbP)$ and random  variables $a_n : \Omega\to \bbC$ for $n\in \bbZ$, yielding a function $\Phi(-):\Omega\to \bbC((e^{i\theta}))$, 
\begin{equation}
\Phi(\omega) = \Phi(\omega)(\theta)= \sum_{n\in \bbZ} a_n(\omega) e^{in\theta}. 
\label{eq:rfs}
\end{equation}
 $a_n$ is called \emph{symmetric} if it is equidistributed with $-a_n$. Note that $\Phi$ is measurable with respect to \cref{eq:v}, so by (\ref{item:Kahane_v}), $\bbP(\Phi^{-1}(\mathtt{P}(I))) \in [0,1]$ is well-defined for every interval $I \subseteq \bbS^1$.  
\begin{proposition}
	\label{prop:Kahane}
	Given a random formal Fourier series $\Phi$ with independent and symmetric coefficients, if with positive probability $\Phi \in \mathtt{P}(I)$ for some random interval $I=I(\omega)$, then $\Phi \in \mathtt{P}(\bbS^1)$ almost surely. 
\end{proposition}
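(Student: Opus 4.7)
The plan is to carry out the standard Kahane-style zero-one argument in three steps: a countability reduction to a single deterministic interval, a Kolmogorov zero-one upgrade from positive probability to probability one, and a propagation to every translate via the symmetry hypothesis and the sheaf property.

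For Step~1, I would set $I_1 = I_2$ in (\ref{item:Kahane_ii}) to obtain the monotonicity $J \subseteq I \Rightarrow \mathtt{P}(I) \subseteq \mathtt{P}(J)$; together with (\ref{item:Kahane_v}) and the fact that every interval of positive length contains a subinterval with rational endpoints, the random-interval event $\{\omega : \Phi \in \mathtt{P}(I(\omega))\}$ sits inside the countable union $\bigcup_J \{\Phi \in \mathtt{P}(J)\}$ over the rational-endpoint intervals, so the positivity hypothesis forces $\mathbb{P}(\Phi \in \mathtt{P}(J_0)) > 0$ for some deterministic $J_0$. For Step~2, property (\ref{item:Kahane_i}) makes $\mathtt{P}(J_0)$ a $\bbC$-linear subspace containing all trigonometric polynomials, so perturbing finitely many coefficients $a_n$ does not change membership and the event is tail in the independent sequence $(a_n)$; Kolmogorov's zero-one law then promotes positive probability to $\mathbb{P}(\Phi \in \mathtt{P}(J_0)) = 1$.

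Step~3 is where the symmetry does essential work. The key computation is that $\Phi(\cdot - \pi) = \sum_n ((-1)^n a_n)\, e^{in\theta}$ has the same law as $\Phi$ --- each $a_n$ is symmetric and they are jointly independent, so $((-1)^n a_n)_n \overset{d}{=} (a_n)_n$ --- and via (\ref{item:Kahane_iii}) this yields $\mathbb{P}(\Phi \in \mathtt{P}(J_0 + \pi)) = 1$. When $|J_0| \geq \pi$ this already combines with iterated (\ref{item:Kahane_ii}) to give $\mathbb{P}(\Phi \in \mathtt{P}(\bbS^1)) = 1$. For smaller $J_0$ I would propagate to all translates $J_0 + \theta$ by observing that $\Phi \in \mathtt{P}(J_0 + \theta)$ is equivalent via (\ref{item:Kahane_iii}) to the series with coefficients $(a_n e^{-in\theta})$ being in $\mathtt{P}(J_0)$. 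Those coefficients are still independent and symmetric, so this is a tail event in $(a_n)$ and has probability in $\{0, 1\}$ by Kolmogorov for every $\theta$. A Fubini-over-uniform-$\theta$ argument, combined with the shift-invariance of the event in coefficient space supplied by (\ref{item:Kahane_iv}), then picks out enough good translates to cover $\bbS^1$ by finitely many of them, and iterated (\ref{item:Kahane_ii}) concludes.

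The main obstacle is precisely the last piece of Step~3: because the $\bbC$-marginals of the $a_n$ are only reflection-symmetric rather than fully rotation-invariant, the rotated sequence $(a_n e^{-in\theta})$ has a genuinely different product law from $(a_n)$ for generic $\theta$, so the full-probability conclusion cannot be transferred from $J_0$ to $J_0 + \theta$ by a direct distribution-invariance argument (as would be available in the rotation-invariant, e.g.\ complex Gaussian, case). The needed symmetrization-plus-Fubini argument --- which applies Kolmogorov's zero-one law separately to every rotated coefficient sequence and averages over $\theta$ --- is the delicate content of Kahane's \cite[Part I, Chp.\ 9, \S7]{KahaneFourierSeries} construction, while the other two steps are routine.
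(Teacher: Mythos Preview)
Your Steps~1 and~2 are correct and coincide with the paper's proof. The gap is in Step~3, and it is not merely that the argument is ``delicate'': the Fubini-over-$\theta$ mechanism you sketch does not close, and it is not what Kahane (and the paper) actually do.

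Concretely, you correctly note that for each fixed $\theta_0$ the event $\{\Phi \in \mathtt{P}(J_0+\theta_0)\}$ is tail, so $f(\theta_0)=\bbP(\Phi\in\mathtt{P}(J_0+\theta_0))\in\{0,1\}$, but you supply no reason why $f(\theta_0)=1$ for any $\theta_0\notin\{0,\pi\}$. Averaging over $\theta$ requires a lower bound on the measure of $\{\theta_0:\Phi(\omega)\in\mathtt{P}(J_0+\theta_0)\}$ for a.e.\ $\omega$, and the abstract axioms give none; the index-shift invariance from (\ref{item:Kahane_iv}) sends $(a_n)\mapsto(a_{n\mp1})$ and does not move $J_0$ at all. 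Reflection symmetry alone only buys $\theta_0=\pi$, which is useless once $|J_0|<\pi$. Your description of Kahane's method as ``Kolmogorov separately for every rotated sequence, then average over $\theta$'' is a mischaracterization.

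The paper's actual argument avoids translating $\Phi$ altogether. Choose $N$ with $|J_0|>2\pi/N$. For each $M\in\{0,\dots,N-1\}$, let $\Phi_M$ be $\Phi$ with the signs of those $a_n$ with $n\equiv M\pmod N$ flipped; by independence and symmetry $\Phi_M\overset{d}{=}\Phi$, so $\Phi_M\in\mathtt{P}(J_0)$ a.s., hence $\tfrac12(\Phi-\Phi_M)=\sum_{n\equiv M}a_ne^{in\theta}\in\mathtt{P}(J_0)$ a.s. Multiplying by $e^{-iM\theta}$ via (\ref{item:Kahane_iv}) yields a series supported on $N\bbZ$, which is $2\pi/N$-periodic; by (\ref{item:Kahane_iii}) it therefore lies in every $\mathtt{P}(J_0+2\pi k/N)$, and these translates cover $\bbS^1$. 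Summing the $N$ residue-class pieces recovers $\Phi\in\mathtt{P}(\bbS^1)$ a.s. The symmetry hypothesis is used not to rotate $\Phi$ but to \emph{extract periodic sub-series}, for which propagation to all translates is automatic.
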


\begin{proof}
	There exists a countable collection $\calI$ of intervals $I_0\subseteq \bbS^1$ such that every other interval contains at least one of them as a subset, so by (\ref{item:Kahane_ii}), $\Phi(\omega) \in \mathtt{P}(I(\omega)) \Rightarrow (\exists I_0 \in \calI) \Phi(\omega) \in \mathtt{P}(I_0)$ (where $I_0 = I_0(\omega)$ might depend on $\omega$). Consequently, 
	\begin{equation}
	0<\bbP[\Phi \in \mathtt{P}(I)] \leq \sum_{I_0 \in \calI} \bbP[\Phi \in \mathtt{P}(I_0)].
	\end{equation}
	This means that there exists some \emph{fixed} (non-random) $I_0 \in \calI$ such that $\Phi \in \mathtt{P}(I_0)$ with positive probability, so we can take $I=I_0$ (and in particular constant) without loss of generality. 
	By (\ref{item:Kahane_i}) and (\ref{item:Kahane_v}), $\mathtt{P}(I)$ is in the tail $\sigma$-algebra $\cap_{N>0} \sigma(a_n : |n|>N)$.
	This implies, by the Kolmogorov zero-one law, $\bbP [\Phi \in \mathtt{P}(I)]  \in \{0,1\}$, so given our choice of $I$, $\Phi \in  \mathtt{P}(I)$ almost surely. Pick $N\in \bbN$ such that $|I|> 2\pi / N$. Since by assumption $\Phi$ has symmetric and independent coefficients,
	\begin{equation}
	\Phi(\omega)  =  \sum_{n\in \bbZ} a_n(\omega) e^{in\theta} \quad \text{ and } \quad \Phi_M(\omega)  =  \sum_{n\in \bbZ} (-1)^{1_{n= M \bmod N}} a_n(\omega) e^{in\theta} 
	\end{equation} 
	are equidistributed, $M\in\{0,\ldots,N-1\}$. (Note that $\Phi_M$ is also measurable, and the term `equidistributed' has the obvious meaning.) We deduce from this that $\Phi_M \in \mathtt{P}(I)$ almost surely, and then from that (and the already made observation that $\Phi \in \mathtt{P}(I)$ almost surely) that $\Phi-\Phi_M \in \mathtt{P}(I)$ almost surely (using the subspace clause of (\ref{item:Kahane_i})). Applying (\ref{item:Kahane_iv}) a number of times, we conclude that 
	\begin{equation}
	\frac{1}{2}\cdot e^{-iM\theta} (\Phi-\Phi_M) = \sum_{n\in \bbZ, n= M \bmod N} a_n e^{i(n-M)\theta}
	\label{eq:3z}
	\end{equation}
	lies in $\mathtt{P}(I)$ almost surely. But \cref{eq:3z} is \emph{periodic} with period $2\pi / N$,  and so by (\ref{item:Kahane_iii})  lies in $\mathtt{P}(I+2\pi/N) \cap \cdots \cap\mathtt{P}(I+2\pi)$ almost surely. By the definition of $N$, the $N$ intervals $I+2\pi / N, \ldots, I+2\pi$ cover $\bbS^1$. Using (\ref{item:Kahane_ii} repeatedly, we conclude that \cref{eq:3z} lies in $\mathtt{P}(\bbS^1)$ almost surely. Adding together the $N$ different formal series \cref{eq:3z}, $M=0,\ldots,N-1$, (and using (\ref{item:Kahane_i}) one last time to say that the sum lies in $\mathtt{P}(\bbS^1)$,) we conclude that $\Phi \in \mathtt{P}(\bbS^1)$ almost surely. 
\end{proof}
\begin{remark}
	As an example of a property $\mathtt{P}$ that satisfies all of the hypotheses except (\ref{item:Kahane_i}), let $\mathtt{P}(I)$ denote the set of formal Fourier series which converge in $\scrD'(\bbS^1)$ to a distribution whose support is contained in the complement of $I$. Then, for $I$ a nonempty open interval, no nonzero trigonometric polynomials are in $\mathtt{P}(I)$. 
	
	Given any symmetric random $\bbR$-valued variable $a$, we can consider $\Phi = a$ as a random formal Fourier series (whose higher Fourier coefficients are all zero). Thus, $\Phi$ has independent and symmetric coefficients. If $a=0$ with positive probability, then $\Phi \in \mathtt{P}(\bbS^1)$ with positive probability, but if $\Phi \in \mathtt{P}(\bbS^1)$ almost surely then $a=0$ almost surely. 
	
	Hence, the proposition does not apply to this particular property $\mathtt{P}$.
\end{remark}
Our main argument is of a rather different sort, though we  make use of the trick (``rerandomization'') of randomly resampling the signs of symmetric random variables in several places.

\begin{remark}
	We obviously cannot dispense of the hypothesis that our coefficients are symmetric and independent, a restriction familiar from e.g.\ \cite{Nordlander}\cite{ItoNisio}\cite{Hoffmann1974}. 
\end{remark} 

We study ``Fourier series'' which are more specific than those in \cref{eq:rfs} in two ways: we restrict attention to Gaussian coefficients (although by appealing to domination results like \cite[Proposition 6.1.15]{Hytonen2016analysis} we could make do with Rademacher coefficients), and those coefficients obey upper and lower polynomial bounds. 
Given the generality of \Cref{prop:Kahane}, it seems plausible that our setup is overly restrictive, but we do not concern ourselves with this possibility here.

When considering regularity statements encodeable via almost sure membership in some Banach space, 
the geometry of that Banach space is relevant, specifically the theory of ($\gamma$-)radonifying maps into it. The $\gamma$-radonifying maps into $L^p$-spaces are closely related to $p$-summing maps.  This is remarked upon in \cite{SchwartzRadon}, but the consequences are only worked out systematically afterwards --- see, for example, either of \cite{SchwartzGeometry}\cite{SchwartzGeometrySummary}. A number of specialized treatments of summing and radonifying operators have appeared since the pioneering work in the 60's, 70's, and early 80's. Some emphasize probabilistic aspects, e.g.\ \cite{vanNeerven2010}, and some emphasize geometric and functional analytic aspects, e.g.\ \cite{Diestel95}.

Later papers on the regularity of  noise have typically focused on refinements of \cite{Reed74} involving $L^p$-based Sobolev spaces (or more generally Besov spaces), or they consider more general sorts of noises. As a sample:
\begin{enumerate}
	\item Kusuoka \cite{Kusuoka}, generalized Reed and Rosen's result to $L^p$-based Sobolev regularity (while keeping track of both logarithmic amounts of regularity and decay).
	\item Ciesielski, Kerkyacharian, and Roynette \cite{Ciesielski91}\Cite{Ciesielski93}\cite{Roynette}\cite{RoynetteSolo} and Hyt\"onen \& Veraar \cite{HytonenVeraar} considered various versions of the Besov regularity of (multidimensional) Brownian motion. 
	\item Fageot, Unser, and collaborators \cite{Fageot17}\cite{FageotPeriodic}\cite{Aziznejad2018wavelet} -- apparently with applications to image processing in mind -- considered the regularity of L\'evy noises, some of which were discussed earlier by Schwartz in \cite[Lecture 7]{SchwartzGeometry}.
	\item Veraar \cite{Veraar11}, in the process of understanding the periodic Korteweg-de Vries (KdV) equation, considered the  Besov regularity of white noise on the multidimensional torus. See also the discussion in \cite[Lecture 9]{SchwartzGeometry}.
\end{enumerate}

The point of restricting attention to the $L^2$-based Sobolev spaces, as we do here, is that we possess a complete classification of the radonifying operators between them, and more generally of the radonifying operators between any two separable Hilbert spaces. They are precisely the Hilbert-Schmidt (HS) operators. This is, in a nutshell, \emph{Sazonov's theorem}, along with its converse (which is actually also due to Sazonov \cite{Sazonov}). Hence, a canonical cylinder set measure (whose covariance is just the Hilbert space's inner product) on some separable Hilbert space is pushed forward by a bounded linear map to a Radon measure if and only if the map is Hilbert-Schmidt. See \cite[Part II-Chapter III, Theorem 2]{SchwartzRadon} for Sazonov's theorem proper and  \cite[Part II-Chapter IV, Theorem 3]{SchwartzRadon} for the converse. 
While we restrict attention to $L^2$-based Sobolev regularity,  we do so in geometrically nontrivial situations (as opposed to e.g.\ the flat torus), and with an eye towards locally and microlocally sharp \emph{ir}regularity statements. We also deal with fairly general Gaussian noises, though it should be clear that this added generality is largely cosmetic.

By ``microlocally sharp'' I intend to refer to the sharpening of 
\begin{itemize}
	\item a global irregularity statement, like 
	\begin{equation} 
	\text{``$\Phi(\omega) \notin \scrH^s(M)$ for $\bbP$-almost all $\omega\in \Omega$,''}
	\label{eq:unsharp_example} 
	\end{equation}
	where $(M,g)$ is our (compact) Riemannian manifold and $(\Omega,\calF,\bbP)$ is some probability space on which a random distribution $\Phi: \Omega\to \scrD'(M)$ is defined, to 
	\item a uniform and microlocal irregularity statement, like 
	\begin{equation} 
	``\operatorname{WF}^{s}(\Phi(\omega)) = \bbS^*M \text{ for $\bbP$-almost all $\omega\in \Omega$,''}
	\label{eq:sharp_example} 
	\end{equation} 
	where $\operatorname{WF}^s(\Phi(\omega))$ is the $s\in \bbR$ order Sobolev wavefront set of $\Phi(\omega)$, which we consider naturally as a subset of the cosphere bundle $\bbS^*M$ over $M$. 
\end{itemize}
Recall that  $\Phi(\omega) \in \scrH^s(M) \iff \operatorname{WF}^s(\Phi(\omega)) = \varnothing$, which is why \cref{eq:sharp_example} is a sharpening of \cref{eq:unsharp_example}.
For the probabilist to whom microlocal analysis might be unfamiliar, we provide a brief overview of the Kohn-Nirenberg calculus in \S\ref{subsec:KN}. Alternatively, accessible expositions can be found in \cite{Grigis}\cite{Vasy18}\cite{HintzNotes}, and the standard reference is H\"ormander's \cite[Chapter 18]{Hormander}. We refer the reader to these resources for any undefined notation related to the pseudodifferential calculi herein.

Now, it is straightforward to construct Gaussian noise on compact Riemannian manifolds, and the \emph{global} $L^2$-based Sobolev regularity is readily and elementarily computed. See \S\ref{subsec:random}. Very little is needed. What is not so clear, and seems to require more technology, is that the local (and microlocal) regularity at any specified point is no better than the global regularity. 
Via our approach -- and restricting attention to local regularity for the moment -- this statement is reduced via Sazonov's theorem (see \Cref{Sazonov}) to estimating the Hilbert-Schmidt norm of the Kohn-Nirenberg pseudodifferential operators 
\begin{equation}
(1+\triangle_g)^{\sigma/2} M_\chi (1+\triangle_g)^{\varsigma/2} 
\label{eq:lh03}
\end{equation}
as acting on the Hilbert space $\scrL^2(M,g)$, if they act on $\scrL^2(M,g)$ at all. Here $\sigma,\varsigma\in \bbR$, and $M_\chi$ is pointwise multiplication by an arbitrary ``bump'' function $\chi \in C^\infty(M)$. Later on, we will consider \cref{eq:lh03} for more general \emph{microlocal cutoffs} in place of $M_\chi$, and this is what allows us to get microlocal statements. The fractional powers of the Laplacian appearing in \cref{eq:lh03} are discussed briefly in \S\ref{subsec:spectral}, and the reader is directed to \cite{Shubin} for details.

The structure of this paper is as follows. 
\begin{itemize}
	\item In \S\ref{sec:setup}, as indicated above, we set up the problem and review the Kohn-Nirenberg pseudodifferential operators ($\Psi$DOs). In \S\ref{subsec:random}, we discuss the global $L^2$-Sobolev regularity of random distributions. All in all, \S\ref{sec:setup} is essentially entirely expository. 
	\item In \S\ref{sec:sazonov}, we prove a consequence, \Cref{Sazonov}, of Sazonov's theorem which reduces questions of pathwise $L^2$-based regularity to functional analytic estimates. In the process, we discuss the various senses in which a random Hilbert space-valued series (with Gaussian coefficients) can converge and exposit their (well-known  \cite[Lecture 1]{SchwartzGeometry}) equivalence, \Cref{prop:Sazonov_equiv}. 
	
	Partly for the sake of completeness, I've attempted to make this section rather thorough in terms of the different ways in which the result can be deduced from other famous probabilistic theorems.  We also state an It\^o-Nisio-type  theorem, \Cref{Ito-Nisio}, essentially a special case of a result of Hoffmann-J{\o}rgensen \cite{Hoffmann1974}, which ought to be useful in the study of random distributions more generally.  Nothing in this section is essentially new, but hopefully some readers will find some utility in our overview, and our particular presentation lends itself to the material in \S\ref{sec:main_theorem}. 
	\item In \S\ref{sec:main_theorem}, we apply as a blackbox the consequence of Sazonov's theorem to the pathwise microlocal regularity of Gaussian noise. Our main theorem is \Cref{thm:main}, and we deduce from it \Cref{thm:prelim}. 
\end{itemize}
It's worth noting that the core of our argument, \S\ref{sec:main_theorem}, is quite easy, once the general results of \S\ref{sec:setup} and \S\ref{sec:sazonov} are at our disposal, and this is one attractive feature of the approach. It seems likely that the main theorem could be deduced via more direct arguments, but they would likely not possess the simplicity of the argument here.

While our scope in this paper is rather narrow, the same sort of analysis is expected to apply (\emph{mutatis mutandis}) more generally:
\begin{enumerate}
	\item While we have singled out the Laplace-Beltrami operator $\triangle_g$, analogous arguments apply to other semibounded self-adjoint elliptic pseudodifferential operators.
	\item While we only consider random distributions, i.e.\ rough sections of the trivial bundle over $M$, the arguments can also handle random distributional sections of arbitrary vector bundles (with $\triangle_g$ replaced appropriately).
	\item While we only deal with $L^2$-based regularity, using the theory of $p$-summing and radonifying operators in conjunction with the $L^p$-boundedness of zeroth order Kohn-Nirenberg pseudodifferential operators it should be possible to get sharpenings of \Cref{thm:main} in the $L^p$-based Sobolev spaces.
	\item Finally, while we have restricted attention to compact manifolds, it is not significantly more effort to handle asymptotically conic manifolds, in which case we need to work with scattering $\Psi$DOs in the sense e.g.\ of Melrose \cite{Melrose}. The results of Reed \& Rosen  and Kusuoka in the case of exact Euclidean space fit into this framework. 
\end{enumerate}
It's also worth noting that it is likely possible to prove that, in a precise sense, given most natural notions of regularity, Gaussian noise on the torus and on a general compact Riemannian manifold have the same level of regularity. ``Natural'' here roughly means invariant under diffeomorphisms and 0th order Kohn-Nirenberg $\Psi$DOs. We do not address here the possibility of deducing (ir)regularity results on $(M,g)$ from the corresponding results on the torus $\bbT^d$, but this possibility seems promising, albeit not obviously more efficient than the direct argument in \S\ref{sec:main_theorem}, at least in the $L^2$-based case. 
These extensions will be left to possible future works.

\section{The setup}
\label{sec:setup}
Suppose that $(M,g)$ is a (nonempty, connected) compact Riemannian manifold -- without boundary and of course smooth -- of positive dimension $d=\dim M$, and let $\triangle_g \in \operatorname{Diff}^2(M)$ denote the positive-semidefinite Laplace-Beltrami operator corresponding to the metric $g$. In local coordinates $x^\bullet: M\supseteq U \to \bbR^d$, $\triangle_g$ is given by 
\begin{equation}
\triangle_g = - \frac{1}{|g|^{1/2}} \sum_{i,j=1}^d \frac{\partial}{\partial x^i} (|g|^{1/2} g^{ij}  \frac{\partial}{\partial x^j}),
\end{equation} 
where $g^{ij}$ is the inverse metric and $|g|= \det (g_{ij})_{i,j}$. The \emph{cosphere bundle} $\bbS^* M$ over $M$ is the boundary of the radial compactification of the cotangent bundle over $M$. 

\subsection{Function Spaces and the Kohn-Nirenberg Calculus}
\label{subsec:KN}

We will denote by 
\begin{equation} 
\scrD(M) = \{C^\infty \text{ functions }\varphi: M\to \bbC \}
\label{eq:test_functions}
\end{equation}  
the complex Fr\'echet space of smooth $\bbC$-valued functions on $M$, whose topology is generated by a $g$-dependent choice of $C^k$ norms for $k\in \bbN$. The norms are $g$-dependent, but the topological vector space (TVS) $\scrD(M)$ is not. The function spaces we consider do not depend on the metric $g$ at the level of sets, and so we will often omit the metric dependence from our notation, at least when the metric dependence is unimportant.
I will denote by $\scrD'(M)$ the TVS-dual of $\scrD(M)$, endowed with the weak-$*$ topology.
This is (abstractly) the LCTVS of distributions on $M$. 

The duality pairing here will be formally written like the $\scrL^2(M,g)$-inner product, 
\begin{align}
\langle - ,- \rangle_{\scrL^2(M,g)}&:\scrD(M)\times \scrD'(M) \ni (\varphi,u) \mapsto \int_M \varphi(x)^* u(x) \dd^d \operatorname{Vol}_g(x) = u(\varphi^*),
\label{eq:duality_pairing} \\
\langle - ,- \rangle_{\scrL^2(M,g)}&:\scrD'(M)\times \scrD(M) \ni (u,\varphi) \mapsto \int_M u(x)^* \varphi(x) \dd^d \operatorname{Vol}_g(x) = u(\varphi^*)^*.
\label{eq:duality_pairing_ii}
\end{align}
No confusion should arise out of the overloaded notation.
The metric $g$ defines a trivialization of $\Omega^1(M)$, the 1-density bundle over $M$, and this induces a $g$-dependent antilinear embedding  
\begin{equation}
\iota_g: \scrD(M) \hookrightarrow \scrD'(M)
\label{eq:function_embedding} 
\end{equation}
of topological vector spaces, $\iota_g(\varphi) = \langle  \varphi,- \rangle_{\scrL^2(M,g)}$. 
We typically identify $\scrD(M)$ with its image under $\iota_g$. Subspaces of $\scrD'(M)$ will be denoted with the mathscr typeface. 

Recall that, for each $s\in \bbR$, we can define a Sobolev space $\scrH^s(M)$ of distributions on $M$ which are locally in $\scrH^s(\bbR^d)$. 
This notion is well-defined, and
naturally a $g$-independent Hilbertizable space. These are indexed in order of increasing regularity, so 
\begin{equation}
\scrD'(M)``=" \scrH^{-\infty}(M) \supsetneq \cdots  \supsetneq \scrH^s(M) \supsetneq \scrH^{s'}(M) \supsetneq \cdots \supsetneq \scrH^{+\infty} ``=" \scrD(M) 
\label{eq:0b3}
\end{equation}
for any $s< s'$. Here we are identifying smooth functions on $M$ with their equivalence classes of functions agreeing almost everywhere with respect to the Lebesgue measure on coordinate charts.
The inclusion \cref{eq:function_embedding} induces a continuous embedding 
\begin{equation} 
\iota_g: \scrH^s(M)\hookrightarrow \scrD'(M), 
\end{equation} 
and we identify each Sobolev space with its image under $\iota_g$ in \cref{eq:0b3} and below. 
Given a metric $g$, we can define a natural inner product $\langle - ,- \rangle_{\scrH^s(M,g)}$ on $\scrH^s(M)$ by writing 
\begin{equation}
\langle u, v \rangle_{\scrH^s(M,g)} = \langle (1+\triangle_g)^{s/2} u , (1+\triangle_g)^{s/2} v \rangle_{\scrL^2(M,g)}
\end{equation}
for $u,v \in \scrH^s(M)$. 
The resultant $g$-dependent Hilbert space will be denoted $\scrH^s(M,g)$.
(See below for a discussion of $(1+\triangle_g)^{s/2}$.)

Central to this paper is the graded $\bbC$-algebra 
 \begin{equation}
 \Psi (M) = \cup_{s\in \bbR} \Psi^s(M) 
 \end{equation}
 \begin{equation}
 \Psi^{-\infty}(M) \subsetneq \cdots \subsetneq \Psi^s(M) \subsetneq \cdots \subsetneq \Psi^{s'}(M) \subsetneq \cdots \subsetneq \Psi^{+\infty}(M) 
 \label{eq:ps}
 \end{equation}
 ($s,s' \in \bbR$, $s<s'$) of \emph{Kohn-Nirenberg} pseudodifferential operators on our compact manifold $M$, as defined e.g.\ in \cite[Chapter XVIII]{Hormander}\cite{Grigis}\cite{Vasy18}\cite{HintzNotes}. 
 The elements of $\Psi(M)$ will be thought of as particular continuous linear maps $\scrD(M)\to \scrD(M)$ that extend to continuous linear maps $\scrD'(M)\to \scrD'(M)$. 
 The ur-example of an element of the vector space $\Psi^s(M)$ is the essentially-self adjoint operator 
 \begin{equation}
 (1+\triangle_g)^{s/2} : \scrD(M)\to \scrD(M)\subseteq \scrL^2(M,g) 
 \label{eq:fractional_power_laplacian}
 \end{equation}
 on $\scrL^2(M,g)$ defined via the functional calculus. The fact that the fractional powers of the Laplacian are elements of $\Psi(M)$ of the indicated orders is a special case of a well-known theorem of Seeley \cite{SeeleyComplex}\cite{SeeleyComplex2}.
 This has several nontrivial consequences  -- e.g.\ pseudolocality (and microlocality) -- which are not obvious from the spectral theoretic construction of \cref{eq:fractional_power_laplacian}. One possible explanation for the brevity of the analysis in this paper is the power of Seeley's theorem.
 
 Kohn-Nirenberg $\Psi$DOs on manifolds are given locally, meaning modulo globally defined smoothing operators, by Kohn-Nirenberg $\Psi$DOs on $\bbR^d$. For $u\in \scrS(\bbR^d)$ a Schwartz function, the action of a Kohn-Nirenberg operator $A \in \Psi^s(\bbR^d)$, $s\in \bbR$, on $u$ is given by the well-defined iterated integral
\begin{equation}
 A u(x) =  \frac{1}{(2\pi)^d} \int_{\bbR^d} e^{ix\cdot \xi} a(x,\xi)\Big( \int_{\bbR^d} e^{-iy\cdot \xi}  u(y) \dd^d y \Big)\dd^d \xi = \frac{1}{(2\pi)^d} \int_{\bbR^d} e^{ix\cdot \xi} a(x,\xi) \hat{u}(\xi) \dd^d \xi
 \label{eq:ll}
\end{equation}
where $a\in C^\infty(\bbR^{2d})$ is the \emph{full symbol} of $A$. In other words, $\Psi^s(\bbR^d)$ can be defined as the set of operators on $\scrS(\bbR^d)$ given by \cref{eq:ll} for $a \in S^s(\bbR^d,\bbR^d)$, the set of Kohn-Nirenberg symbols. This is the ``quantization'' typically known as ``left quantization.'' Other possible quantizations (all equivalent to left quantization by the so-called reduction formula) are Weyl quantization (which seems to be the most common in the literature) and right quantization. The Kohn-Nirenberg symbols are smooth functions satisfying polynomial asymptotics at fiber infinity (in a sense made precise by radial compactification), in the sense that 
\begin{equation}
\sup_{x\in \bbR^d, \xi \in \bbR^d} | \langle \xi \rangle^{-s+|\beta|} \partial_x^\alpha \partial_\xi^\beta  a(x,\xi) | < \infty 
\label{eq:j9}
\end{equation}
for any multi-indices $\alpha,\beta \in \bbN^d$, $|\beta|= \lVert \beta \rVert_{\ell^1}$. Here $x$ is the ``base coordinate'' and $\xi$ is the ``fiber coordinate.'' After a bit of work (meaning a few estimates), the details of which can be found in the references cited above, the action of $A$ on $\scrS'(\bbR^d)$ can be defined via continuity. 

Here are a few key properties of the algebra of Kohn-Nirenberg $\Psi$DOs, the proofs of which can be found in the references cited above or a myriad of other places, in some form or another. I will use the following results mostly without comment (or with at most a brief comment) in \S\ref{sec:main_theorem}. First, $\Psi(M)$ is a graded $\bbC$-algebra, with 
 	\begin{equation}
 	\Psi^s(M) \Psi^\ell(M) = \{P_1P_2: P_1\in \Psi^s(M), P_2\in \Psi^\ell(M)\} = \Psi^{s+\ell}(M) 
 	\end{equation}
 	for all $s,\ell \in \bbR$, where $\Psi^s(M)$ denotes the $s$th level of $\Psi(M)$. (In fact, $\Psi(M)$ is a graded Fr\'echet algebra, since the symbol spaces are Fr\'echet spaces whose algebraic structure is compatible with the algebraic structure of the calculus of $\Psi$DOs.) The $C^\infty(M)$-module $\operatorname{Diff}^k(M)$ of $k$th order ($k\in \bbN$) differential operators on $M$ satisfies 
 	\begin{equation}
 	\operatorname{Diff}^k(M) \subseteq \Psi^k(M). 
 	\end{equation}
 	Moreover, if $A \in \operatorname{Diff}^k(M)$ is invertible and elliptic (see below), then we may consider its inverse as an element $A^{-1} \in \Psi^{-k}(M)$. (This observation can be extended to generalized inverses in a straightforward way -- cf.\ \cite[Theorem 5.45]{HintzNotes}.) 
 	In particular, $(1+\triangle_g)^k \in \Psi^{2k}(M)$ for all $k\in \bbZ$. (And then Seeley's theorem extends this to the $k\notin \bbZ$ case.)

 	If $A : \scrD(M)\to \scrD'(M)$ is a continuous linear map with a smooth Schwartz kernel, then $A \in \Psi^s(M)$ for all $s\in \bbR$, and in fact 
 	\begin{equation} 
 	A:\scrD'(M)\to \scrD(M). 
 	\label{eq:sm}
 	\end{equation} 
 	Writing $\Psi^{-\infty}(M) = \cap_{s\in \bbR} \Psi^s(M)$ as in \cref{eq:ps}, $A \in \Psi^{-\infty}(M)$. Such $A$ are often called \emph{residual} or, by virtue of \cref{eq:sm}, \emph{smoothing}. For each $s\in \bbR$, we have a short exact sequence 
 	\begin{equation}
 	0 \to \Psi^{s-1}(M)\subseteq \Psi^s(M) \overset{\sigma^s}{\to} S^{[s]}(T^* M) \to 0, 
 	\label{eq:ses}
 	\end{equation}
 	where $S^{[s]}(T^* M) = S^s(T^* M)/S^{s-1}(T^* M)$ and $S^s(T^* M)$ is the Fr\'echet space of Kohn-Nirenberg symbols $T^* M\to \bbC$. This is the set of smooth functions satisfying all the estimates \cref{eq:j9} in local coordinates. Also, $\sigma^s$ is the \emph{principal symbol map}. Locally, $\sigma^s$ is given by $A\mapsto a \bmod S^{s-1}(\bbR^d,\bbR^d)$, where $A,a$ are as in \cref{eq:ll}. 
 	Moreover: given $P \in \Psi^s(M), Q \in \Psi^\ell(M)$, 
 	\begin{equation}
 	\sigma^s(P) \sigma^\ell(Q) = \sigma^{s+\ell}(PQ).  
 	\label{eq:pq}
 	\end{equation} 
 	That is, the principal symbol map is a  multiplicative homomorphism. (This is, in some sense, \emph{the} key property of the symbol calculus.) Properly speaking, principal symbols are not functions but equivalence classes of functions. When we write a statement like \cref{eq:pq}, we implicitly mean that the indicated operation -- which makes sense for Kohn-Nirenberg symbols -- induces a well-defined operation on elements of $S^{[s]}(T^* M)$. This abuse of terminology is standard and mostly harmless, as far as I can tell. In \S\ref{sec:main_theorem}, we will work instead with zeroth order classical symbols, which can be considered as functions, but on the cosphere bundle over $M$ rather than the cotangent bundle. Since classical symbols can also be thought of as ordinary symbols, we will not belabor the point with a discussion of their particular properties. 
 	
 	Suppose that $P \in \Psi^s(M)$ is \emph{elliptic} at a point $(x_0,\xi_0) \in \bbS^* M$, $x_0\in M,\xi_0 \in \bbS^*_x M$. This means that we have a lower bound 
 	\begin{equation} 
 		\varsigma^s(P)(x,\xi) \geq c\langle \xi \rangle^s
 	\end{equation}  
 	for some $c>0$ and  all sufficiently large $\xi$, for $(x,\xi)$ in some conic neighborhood $\Gamma \subset T^* X$ of $(x_0,\xi_0)$. Here $\varsigma^s(P)$ is an arbitrary representative of the equivalence class $\sigma^s(P)$.  (Note that this notion is well-defined, so both independent of a choice of coordinates and of the choice of representative.)
 	
 	Then there exists a ``(microlocal) parametrix'' $Q\in \Psi^{-s}(M)$ which microlocally inverts $P$ near $(x,\xi)$ in the sense that
 	\begin{equation}
 	(x,\xi)\notin \operatorname{WF}'(PQ-I) , \operatorname{WF}'(QP-I), 
 	\end{equation} 
 	where $I\in \operatorname{Diff}^0(M)$ is the identity. 
 	Here $\operatorname{WF}'$ denotes the essential support of a $\Psi$DO, roughly the set of points in the cosphere bundle where our operator is not smoothing. This is most elementarily defined in local coordinates: given $A \in \Psi^s(\bbR^d)$ with full symbol $\sigma(A) \in \smash{S^s(\bbR^d_x,\bbR^d_\xi)}$, $(x,\xi) \notin \operatorname{WF}'(A)$ if and only if $a$ vanishes rapidly as $\xi\to \infty$ (or equivalently is of infinitely negative symbolic order) in some conic neighborhood of $(x,\xi)$, and this notion is diffeomorphism invariant and hence extends to a well-defined notion regarding compact manifolds. 
 	Alternatively, one can define $\operatorname{WF}'(A)$ in terms of the wavefront set of the Schwartz kernel $K_A$, or by demoting the statement of microlocality to its definition.  Cf.\ \cite[Chapter 18, Proposition 18.1.26]{Hormander}. Given any $A \in \Psi^s(M)$, 
 	\begin{equation} 
 	\operatorname{WF}^{s_0-s}(A \varphi) \subseteq \operatorname{WF}^{s_0}(\varphi) \cap \operatorname{WF}'(A) 
 	\label{eq:ul}
 	\end{equation} 
 	for all $\varphi \in \scrD'(M)$ and $s_0 \in \bbR$. (See below for the definition of $\operatorname{WF}^s$.) Moreover/in particular, $A:\scrH^{s_0}(M)\to \scrH^{s_0-s}(M)$ is continuous for all $s_0 \in \bbR$. Given any $A\in \Psi^s(M)$, 
 	\begin{equation} 
 	\operatorname{WF}^{s_0-s}(A\varphi) \supseteq \operatorname{WF}^{s_0}(\varphi) \backslash \operatorname{Char}^s(A), 
 	\label{eq:er}
 	\end{equation} 
 	where $\operatorname{Char}^s(A) \subseteq \bbS^* M$ is the \emph{characteristic set} of $A$, i.e.\ the set of points in the cosphere bundle at which $A$ fails to be elliptic.

These properties are interrelated: elliptic regularity, for example, can be derived from microlocality and the construction of an elliptic parametrix. We have omitted properties which, while central to applications of the Kohn-Nirenberg calculus to PDE, do not bear on the discussion in \S\ref{sec:main_theorem}.
Recall that the $\scrH^s$-wavefront set of $u\in \scrD'(M)$ is defined (or can be defined) as 
\begin{equation}
\operatorname{WF}^s(u) = 
\bigcap \{\operatorname{Char}^0(A) : A \in \Psi^0(M), Au \in \scrH^s(M)\}.
\label{eq:wf}
\end{equation}
The properties \cref{eq:ul}, \cref{eq:er} furnish the interpretation of $\operatorname{WF}^s(u)$ as the points in $\bbS^* M$ representing locations at which and directions in which $u$ possesses singularities obstructing the potential inclusion $u \in \scrH^s(M)$. Given $u \in \scrS'(\bbR^d)$, $\operatorname{WF}^s(u)$ is defined analogously to \cref{eq:wf}, and it is straightforward to demonstrate that 
\begin{equation}
\bbS^* \bbR^d\backslash\! \operatorname{WF}^s(u)  = \{(x,\hat{\xi}) \in \bbR^{d}_x\times \bbS^{d-1}_{\hat{\xi}} : \exists \chi_1 \in \scrC_{\mathrm{c}}^\infty(\bbR^d), \chi_2 \in \scrC_{\mathrm{c}}^\infty(\bbS^{d-1}) \text{ s.t. \cref{eq:cc}}\},
\end{equation}
\begin{equation}
\chi_1(x) = 1, \quad  \chi_2(\hat{\xi}) = 1, \quad \text{ and } \quad \int_{\bbR^d} e^{ix\cdot \xi } \tilde{\chi}_2(\xi)\widehat{\chi_1 u}(\xi) \dd^d \xi \in \scrH^s(\bbR^d_x), 
\label{eq:cc}
\end{equation}
where $\tilde{\chi}_2 \in \scrC^\infty(\bbR^d_\xi)$ is given by $\tilde{\chi}_2(\xi) = \chi_2(\hat{\xi}) \chi_3(\lVert \xi \rVert^2)$, $\xi=\hat{\xi} \lVert \xi \rVert$, $\chi_3 \in C^\infty(\bbR)$ vanishing in some neighborhood of zero and identically equal to one outside of some slightly larger neighborhood. \cref{eq:wf} agrees locally with the definition in Euclidean space, in the obvious sense. Note that via Parseval-Plancherel, the last condition in \cref{eq:cc} is equivalent to $\smash{\langle \xi \rangle^{+s} \tilde{\chi}_2(\xi) \widehat{\chi_1 u}(\xi) \in \scrL^2(\bbR^d_\xi)}$, so $\operatorname{WF}^s(u)$ captures the locations $x$ at which and directions $\smash{\hat{\xi}}$ in which the Fourier transform of $u$ (suitably cutoff) fails, 
in any conic neighborhoods of $(x,\smash{\hat{\xi}})$,
\begin{itemize}
	\item ($s\geq 0$) to decay at least as fast as $\langle \xi \rangle^{-s}$ 
	\item ($s\leq 0$) to grow no faster than $\langle \xi \rangle^{-s}$
\end{itemize}
in an $L^2$-averaged sense. $\operatorname{WF}^s(u)$ is a refinement of the $s$th order singular support, 
\begin{equation} 
\operatorname{singsupp}^s(u) =M\backslash\{x\in M : \exists \chi \in \scrC^\infty(M), \chi(x) = 1, \chi u \in \scrH^s(M)\},
\end{equation} 
of $u$ 
in the sense that $\pi(\operatorname{WF}^s(u)) = \operatorname{singsupp}^s(u)$, where $\pi:\bbS^* M\to M$ is the canonical projection. 

\subsection{The Spectral Decomposition of $\triangle_g$}
\label{subsec:spectral}

Standard elliptic theory -- of which this is the ur-example, in some form going all the way back to Weyl -- tells us that for each $f\in \scrD(M)$, any distributional solution $u \in \scrD'(M)$ to the PDE 
\begin{equation}
\triangle_g u  = f 
\end{equation}
is necessarily smooth, i.e.\  $u \in \scrD(M)$. (This follows from elliptic regularity and the Sobolev embedding theorems.) In concert with analytic Fredholm theory -- of which the spectral family of the Laplacian is also the ur-example -- this tells us that there exist smooth functions $\phi_0=(\operatorname{Vol}_g(M))^{-1/2},\phi_1,\phi_2,\ldots \in \scrD(M)$ such that 
\begin{equation}
\{\phi_n\}_{n=0}^\infty \subset \scrL^2(M,g) 
\label{eq:basis} 
\end{equation}
is an $\scrL^2(M,g)$-orthonormal basis of eigenvectors of $\triangle_g$, where the eigenvalues possibly occur with (finite) multiplicity and no accumulation points. The Laplace-Beltrami operator is positive semidefinite and self-adjoint, so its spectrum $\sigma(\triangle_g)$ satisfies $\sigma(\triangle_g)\subseteq [0,\infty)$.  Reindexing \cref{eq:basis} if necessary, we can therefore write 
\begin{equation}
\sigma(\triangle_g) = \{0 = \lambda_0 \leq \lambda_1 \leq \lambda_2 \leq \lambda_3 \leq \cdots \},
\end{equation}
where $\triangle_g \phi_n = \lambda_n \phi_n$ and $\lim_{n\to\infty} \lambda_n=\infty$. We use the physicists' convention in starting our indexing with zero. ($\phi_0$, which is just a constant, is the ground state.) 

Stronger information on the asymptotic distribution of the $\lambda_n$ is given by \emph{Weyl's law}, which (in its most unrefined form) states that the function $N:[0,\infty)\to \bbN$ given by $N(\lambda)=\#\{n\in \bbN : \lambda_n \leq \lambda\}$ has the asymptotics 
\begin{equation}
N(\lambda) = (1+o(1)) \frac{ \operatorname{Vol}(\bbB^d)}{(2\pi)^d}  \operatorname{Vol}_g(M)\lambda^{d/2}
\label{eq:Weyl's law}
\end{equation}
as $\lambda \to \infty$, where $\operatorname{Vol}(\bbB^d)$ is the volume of the unit ball in $\bbR^d$. See \cite[\S14.3.4]{Zworski}. This result is discussed in most introductory accounts of the applications of pseudodifferential operators to spectral geometry. Note that we can invert \cref{eq:Weyl's law} to give an asymptotic formula for $\lambda_n$:

\begin{proposition}
	\label{Weyl's_law}
	Given any compact Riemannian manifold $(M,g)$, the $\lambda_n$ defined above satisfy $
	\lambda_n = (1+o(1)) 4\pi^2(\operatorname{Vol}(\bbB^d)\operatorname{Vol}_g(M))^{-2/d}  n^{2/d}$
	as $n\to \infty$. 
\end{proposition}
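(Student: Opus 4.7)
The plan is to invert Weyl's law \cref{eq:Weyl's law} by the standard sandwich trick, using monotonicity of $N$.  Set
\begin{equation}
C = \frac{\operatorname{Vol}(\bbB^d)\operatorname{Vol}_g(M)}{(2\pi)^d},
\end{equation}
so Weyl's law reads $N(\lambda) = (1+o(1)) C\lambda^{d/2}$ as $\lambda\to\infty$, and the target asymptotic reads $\lambda_n = (1+o(1)) C^{-2/d} n^{2/d}$ (since $C^{-2/d} = 4\pi^2(\operatorname{Vol}(\bbB^d)\operatorname{Vol}_g(M))^{-2/d}$).

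First I would set up the two trivial one-sided comparisons between $n$ and $N(\lambda_n)$ coming from the ordering of eigenvalues.  On the one hand, $\lambda_0,\ldots,\lambda_n \leq \lambda_n$ gives $N(\lambda_n) \geq n+1$.  On the other hand, for any $\mu<\lambda_n$, monotonicity of the sequence $\{\lambda_k\}_{k=0}^\infty$ forces $\{k : \lambda_k\leq \mu\}\subseteq\{0,1,\ldots,n-1\}$, hence $N(\mu) \leq n$; I will apply this with $\mu = \lambda_n - 1$ (valid once $\lambda_n\geq 1$, which holds for all large $n$ since $\lambda_n\to\infty$).

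Next, feed each inequality into Weyl's law.  Since $\lambda_n\to\infty$, we may apply the asymptotic $N(\lambda)=(1+o(1))C\lambda^{d/2}$ at $\lambda=\lambda_n$ to get
\begin{equation}
n+1 \leq (1+o(1))C\lambda_n^{d/2}, \qquad \text{i.e.\ } \lambda_n \geq (1+o(1)) \bigl(n/C\bigr)^{2/d},
\end{equation}
and at $\lambda = \lambda_n - 1$ to get
\begin{equation}
n \geq (1+o(1))C(\lambda_n-1)^{d/2}, \qquad \text{i.e.\ } \lambda_n - 1 \leq (1+o(1)) \bigl(n/C\bigr)^{2/d}.
\end{equation}
The $-1$ is negligible against $(n/C)^{2/d}\to\infty$, so sandwiching yields $\lambda_n = (1+o(1))(n/C)^{2/d}$; substituting the value of $C^{-2/d}$ gives the claimed expression.

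There is no serious obstacle here: the only mild care needed is to handle eigenvalue multiplicities, which is exactly why I invoked $N(\mu)\le n$ with a strict inequality $\mu<\lambda_n$ rather than trying to read off an upper bound on $N(\lambda_n)$ itself (which can exceed $n+1$ if $\lambda_n$ has positive multiplicity beyond $\lambda_{n+1}$).  The trick $\mu=\lambda_n-1$ bypasses any need for quantitative control on multiplicities, since any fixed additive shift is asymptotically absorbed into the $(1+o(1))$ factor.
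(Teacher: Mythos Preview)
Your proof is correct and takes essentially the same approach as the paper: both invert Weyl's law by substituting $\lambda=\lambda_n$ and controlling the discrepancy between $N(\lambda_n)$ and $n$. The only cosmetic difference is that the paper explicitly derives the multiplicity bound $N(\lambda_n)-N(\lambda_n-\varepsilon)=o(\lambda_n^{d/2})$ and writes $N(\lambda_n)=n+o(\lambda_n^{d/2})$, whereas you sandwich via $N(\lambda_n)\geq n+1$ and $N(\lambda_n-1)\leq n$; these are equivalent ways of handling the same issue.
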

\begin{proof} 
	Since $\lambda_n\to \infty$ as $n\to\infty$, \cref{eq:Weyl's law} holds with $\lambda_n$ substituted in for $\lambda$, where the asymptotics are now to be understood as being taken as  $n\to\infty$: 
	\begin{equation}
	N(\lambda_n) = (1+o(1))  \frac{ \operatorname{Vol}(\bbB^d)}{(2\pi)^d} \operatorname{Vol}_g(M) \lambda_n^{d/2}.
	\label{eq:Weyl's law inverted preliminarily}
	\end{equation}
	Weyl's law implies that the multiplicity of any given eigenvalue $\lambda_n$ is  $\lim_{\epsilon\to 0^+} (N(\lambda_n) - N(\lambda_n-\epsilon)) = \smash{o(\lambda_n^{d/2})}$ as $n\to\infty$, so that 
	\begin{equation} 
	N(\lambda_n)=n+o(\lambda_n^{d/2}).
	\label{eq:misc_019}
	\end{equation} 
	(Indeed, by Weyl's law, $N(\lambda_n-\varepsilon) = (1+o(1))(2\pi)^{-d} \operatorname{Vol}(\bbB^d) \operatorname{Vol}_g(M)(\lambda_n-\varepsilon)^{d/2}$. Subtracting this from \cref{eq:Weyl's law inverted preliminarily} and taking $\epsilon\to 0^+$, \cref{eq:misc_019} follows.)
	 Plugging this into  (\ref{eq:Weyl's law inverted preliminarily}) and solving for $\lambda_n$ yields the result. 
\end{proof}

For each $\varsigma \in \bbR$, Let $h^\varsigma(\bbN)$ denote the Hilbert space of all sequences  $\bfa=\{a_n\}_{n=0}^\infty \subset \bbC$ such that 
\begin{equation}
\{ (1+n)^\varsigma a_n\}_{n=0}^\infty \in \ell^2(\bbN), \text{ i.e. s.t. } \lVert \{a_n\}_{n=0}^\infty \rVert_{h^\varsigma}^2 \overset{\mathrm{def}}{=}\sum_{n=0}^\infty (1+n)^{2\varsigma} |a_n|^2 < \infty,
\end{equation}
with the usual inner product $\langle \bfa,\bfb \rangle_{h^\varsigma} = \langle (1+n)^{\varsigma} \bfa, (1+n)^{\varsigma} \bfb \rangle_{\ell^2}$.
Similarly, define 
\begin{equation}
d(\bbN) = \cap_{\varsigma \in \bbR}h^\varsigma(\bbN) \quad \text{ and } \quad  d'(\bbN) = \cup_{\varsigma \in \bbR} h^{\varsigma}(\bbN). 
\end{equation} 
Endow $d(\bbN)$ with the topology generated by the countably many norms $\lVert - \rVert_{h^k}$ for $k\in \bbZ$. $d(\bbN)$ is the Fr\'echet space consisting of all superpolynomially decaying sequences of complex numbers. Given $\bfa=\{a_n\}_{n=0}^\infty \in d'(\bbN)$ and $\bfb=\{b_n\}_{b=0}^\infty \in d(\bbN)$, define 
\begin{equation} \langle \bfa,\bfb \rangle_{\ell^2} = 
\langle \{a_n\}_{n=0}^\infty , \{b_n\}_{n=0}^\infty \rangle_{\ell^2} = \sum_{n=0}^\infty a_n^* b_n \in \bbC.
\label{eq:sequential_duality_pairing} 
\end{equation}
I will call this the $\ell^2$-pairing. The set of functions $d(\bbN)\to \bbC$ of the form $\langle \{a_n\}_{n=0}^\infty ,-\rangle_{\ell^2}$ for some $\{a_n\}_{n=0}^\infty \in d'(\bbN)$ is precisely the set of continuous linear functionals on $d(\bbN)$, so \cref{eq:sequential_duality_pairing} yields an identification $d'(\bbN) \cong d(\bbN)^* $
of  $d'(\bbN)$ and the underlying vector space of the LCTVS-dual of $d(\bbN)$. We correspondingly endow the vector space $d'(\bbN)$ with the compatible weak-$*$ topology.  

\begin{proposition}
	\label{sequence_Sobolev_correspondence}
	For each $\varsigma\in \bbR$ and $s \leq \varsigma d$, we have a ($g$-dependent) continuous linear map $\Sigma: h^\varsigma(\bbN)\to \scrH^s(M)$ given by
	\begin{equation}
	\{a_n\}_{n=0}^\infty \mapsto \sum_{n=0}^\infty a_n \phi_n , 
	\label{eq:generic_sum}
	\end{equation}
	the series on the right-hand side being summable in $\scrH^s(M)$. This is an isomorphism of vector spaces if (and only if) $\varsigma = s/d $, in which case it is an equivalence of Banach spaces. 
\end{proposition}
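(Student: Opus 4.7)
The plan is to reduce everything to the identification $\scrH^s(M,g) \cong h^{s/d}(\bbN)$ of Hilbert spaces (up to equivalent norms) and then to compare the sequence spaces $h^\varsigma(\bbN)$ and $h^{s/d}(\bbN)$. First, I would use the functional calculus for $\triangle_g$ to note that $(1+\triangle_g)^{s/2} \phi_n = (1+\lambda_n)^{s/2} \phi_n$ for each $n$. Consequently,
\begin{equation}
\langle \phi_m, \phi_n \rangle_{\scrH^s(M,g)} = \langle (1+\triangle_g)^{s/2} \phi_m, (1+\triangle_g)^{s/2} \phi_n \rangle_{\scrL^2(M,g)} = (1+\lambda_n)^s \delta_{mn},
\end{equation}
so the rescaled family $\{(1+\lambda_n)^{-s/2}\phi_n\}_{n=0}^\infty$ is orthonormal in $\scrH^s(M,g)$. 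It is moreover a basis, since $(1+\triangle_g)^{s/2}:\scrH^s(M,g)\to \scrL^2(M,g)$ is a unitary isomorphism carrying this family to the $\scrL^2$-orthonormal basis $\{\phi_n\}_{n=0}^\infty$ of \cref{eq:basis}.

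Next, for any $\bfa=\{a_n\}_{n=0}^\infty$, the partial sums $\Sigma_N(\bfa) = \sum_{n=0}^N a_n \phi_n$ satisfy
\begin{equation}
\lVert \Sigma_N(\bfa) - \Sigma_M(\bfa) \rVert_{\scrH^s(M,g)}^2 = \sum_{n=M+1}^N |a_n|^2 (1+\lambda_n)^s
\end{equation}
for $M<N$, via the orthogonality observation above. By \Cref{Weyl's_law}, $\lambda_n = (1+o(1)) C_{M,g} n^{2/d}$ for a positive constant $C_{M,g}$, so $(1+\lambda_n)^s$ and $(1+n)^{2s/d}$ are comparable (each bounded by a constant multiple of the other) uniformly in $n\in \bbN$. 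Thus,  there exist constants $c,C>0$ such that
\begin{equation}
c\lVert \bfa \rVert_{h^{s/d}}^2 \leq \lVert \Sigma(\bfa) \rVert_{\scrH^s(M,g)}^2 = \sum_{n=0}^\infty |a_n|^2 (1+\lambda_n)^s \leq C \lVert \bfa \rVert_{h^{s/d}}^2,
\end{equation}
the series being convergent whenever $\bfa \in h^{s/d}(\bbN)$. This directly yields the equivalence of Banach spaces claim in the case $\varsigma = s/d$: $\Sigma$ is an isometric isomorphism $h^{s/d}(\bbN)\to \scrH^s(M,g)$ up to equivalent norms, with inverse given by the Fourier coefficient map $u\mapsto \{\langle \phi_n, u \rangle_{\scrL^2(M,g)}\}_{n=0}^\infty$.

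For general $\varsigma$ with $s\leq \varsigma d$, equivalently $\varsigma \geq s/d$, the inclusion $h^\varsigma(\bbN) \hookrightarrow h^{s/d}(\bbN)$ is continuous by the trivial inequality $(1+n)^{2 s/d}\leq (1+n)^{2\varsigma}$, and composing with $\Sigma : h^{s/d}(\bbN)\to \scrH^s(M,g)$ gives the asserted continuous map. The image of this composition is precisely $\Sigma(h^\varsigma(\bbN))$, which by the $\varsigma = s/d$ case is the image in $\scrH^s(M,g)$ of $h^\varsigma(\bbN)\subseteq h^{s/d}(\bbN)$. When $\varsigma > s/d$, this is a proper subspace of $h^{s/d}(\bbN)$ (take $a_n = (1+n)^{-\alpha}$ for $s/d < \alpha -1/2 < \varsigma$), so $\Sigma$ fails to be surjective and cannot be an isomorphism, yielding the ``only if'' direction.

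The main substantive input is \Cref{Weyl's_law} and the spectral-theoretic fact that $(1+\triangle_g)^{s/2}$ is the expected unitary between $\scrH^s$ and $\scrL^2$; everything else is bookkeeping with the orthonormal basis $\{\phi_n\}$. I do not anticipate any real obstacle, though some care is warranted in distinguishing the $g$-independent topological vector space structure on $\scrH^s(M)$ from the $g$-dependent Hilbert space structure on $\scrH^s(M,g)$, which is why the statement only asserts ``equivalence'' (rather than ``isometric isomorphism'') of Banach spaces in the $\varsigma = s/d$ case.
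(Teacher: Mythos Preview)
Your proof is correct and follows essentially the same route as the paper: both arguments use Weyl's law (\Cref{Weyl's_law}) to compare $(1+\lambda_n)^s$ with $(1+n)^{2s/d}$ and thereby identify $\scrH^s(M,g)$ with $h^{s/d}(\bbN)$ via the eigenbasis $\{\phi_n\}$. Your presentation is slightly cleaner in that you recognize $\{(1+\lambda_n)^{-s/2}\phi_n\}$ as an orthonormal basis of $\scrH^s(M,g)$ up front (the paper instead constructs the inverse explicitly by pulling back through $(1+\triangle_g)^{s/2}$), and you supply an explicit sequence witnessing the failure of surjectivity when $\varsigma>s/d$, which the paper leaves implicit.
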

\begin{proof}
	Since $\scrH^s(M)$ is complete, the partial series corresponding to the series on the right-hand side of \cref{eq:generic_sum} converge to some element in $\scrH^s(M)$ in the given topology if and only if 
	\begin{equation}
	\sum_{n=0}^\infty |a_n|^2 (1+\lambda_n)^s  < \infty. 
	\label{eq:convergence_estimate} 
	\end{equation}
	Via Weyl's law in the form of \Cref{Weyl's_law}, we see that there exist some $(M,g)$-dependent and $s$-dependent constants $c,C,N_0>0$ such that
	\begin{equation} 
	cn^{2s/d}\leq (1+\lambda_n)^s \leq Cn^{2s/d} 
	\label{eq:cC}
	\end{equation} 
	for all $n>N_0$. So \cref{eq:convergence_estimate} holds if (and only if) $\{a_n\}_{n=0}^\infty \in h^{s/d}(\bbN)$. 	
	So $\Sigma$ does indeed map $h^\varsigma(\bbN)\to \scrH^s(M)$ when $s\leq \varsigma d$.
	The upper estimate also shows that $\lVert \Sigma \{a_n\}_{n=0}^\infty \rVert_{\scrH^s} \leq C' \lVert \{a_n\}_{n=0}^\infty \rVert_{h^\varsigma}$ for those $s,\varsigma$ and some $C'>0$, so that $\Sigma$ is continuous. 
	
	$\Sigma$ is clearly injective. The surjectivity of $\Sigma$ for $\varsigma = s/d$ follows from the completeness of $\phi_0,\phi_1,\ldots$ as an orthonormal basis of $\scrL^2(M,g)$: given $u \in \scrH^s(M)$, there exist $b_0,b_1,b_2,\ldots \in \bbC$ such that $\{b_n\}_{n=0}^\infty \in \ell^2(\bbN)$ and 
	\begin{equation}
	\sum_{n=0}^\infty b_n \phi_n = (1+\triangle_g)^{s/2} u. 
	\end{equation}
	By the continuity of $(1+\triangle_g)^{-s/2} : \scrL^2(M,g)\to \scrH^s(M,g)$, this implies that $
	\sum_{n=0}^\infty (1+\lambda_n)^{-s/2} b_n \phi_n = u$. Set $a_n = (1+\lambda_n)^{-s/2}b_n$. By \cref{eq:cC}, $\bfa = \{a_n\}_{n=0}^\infty$ lies in $h^{s/d}(\bbN)$ and satisfies $\Sigma \bfa = u$. We conclude that $\Sigma$ is surjective, and (e.g.\ by the open mapping theorem or direct estimate)  an equivalence of Banach spaces. 
\end{proof}

\begin{corollary}
	$\Sigma$ yields isomorphisms 
	\begin{equation}
	\Sigma|_{d(\bbN)} : d(\bbN)\to \scrD(M)  \quad \text{ and } \quad \Sigma : d'(\bbN)\to \scrD'(M) 
	\end{equation}
	of TVSs compatible with the duality pairings \cref{eq:sequential_duality_pairing} and \cref{eq:duality_pairing}. 
\end{corollary}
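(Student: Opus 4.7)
The plan is to deduce both isomorphisms from the Banach space equivalences $\Sigma: h^{s/d}(\bbN) \to \scrH^s(M)$ established in the preceding proposition, the first by intersecting over all $s$ and the second by dualizing. By the Sobolev embedding theorem, $\scrD(M) = \bigcap_{k\in \bbN} \scrH^{kd}(M)$ as a set with matching Fréchet topologies (both generated by the countable family $\{\lVert \cdot \rVert_{\scrH^{kd}}\}_{k \in \bbN}$), and by definition $d(\bbN) = \bigcap_{k\in \bbN} h^k(\bbN)$ with topology generated by $\{\lVert \cdot \rVert_{h^k}\}_{k\in \bbN}$. The two-sided bounds $c_k \lVert \bfa \rVert_{h^k} \leq \lVert \Sigma \bfa \rVert_{\scrH^{kd}} \leq C_k \lVert \bfa \rVert_{h^k}$ supplied at each level $k$ then yield a TVS isomorphism $\Sigma|_{d(\bbN)}: d(\bbN) \to \scrD(M)$ upon intersecting and matching seminorms.

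Transposing this Fréchet isomorphism gives a weak-$*$ homeomorphism $\scrD'(M) \to d'(\bbN)$, since each of $\scrD'(M)$ and $d'(\bbN)$ is endowed with the weak-$*$ topology induced by its predual. To check that the inverse of this transpose coincides with the explicit map $\{a_n\}_{n=0}^\infty \mapsto \sum_{n=0}^\infty a_n \phi_n$, I would locate any given $\bfa \in d'(\bbN)$ in some $h^{-k}(\bbN)$, observe via the preceding proposition that the series converges in $\scrH^{-kd}(M) \subseteq \scrD'(M)$, and evaluate it against a test function $\varphi = \Sigma \bfb$ with $\bfb \in d(\bbN)$. Using orthonormality of the $\phi_n$ on finite truncations and passing to the limit in the $\scrH^{-kd}$-$\scrH^{kd}$ duality, the pairing (in the convention of \cref{eq:duality_pairing_ii}) evaluates to $\sum_{n=0}^\infty a_n^* b_n = \langle \bfa, \bfb \rangle_{\ell^2}$, which simultaneously identifies the two maps and confirms compatibility of the duality pairings.

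The argument is essentially a formal consequence of the preceding proposition together with standard facts about projective and (dually) inductive limits. The only mildly delicate point is extending the orthonormality identification from finitely supported sequences (where it is literally an $\scrL^2$ inner product computation) to arbitrary elements of $d'(\bbN) \times d(\bbN)$; this is a routine continuity argument using the seminorm bounds already in hand, and constitutes no real obstacle.
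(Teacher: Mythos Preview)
Your proposal is correct and is precisely the natural way to fill in the argument the paper leaves implicit: the paper states this corollary without proof, treating it as an immediate consequence of the preceding proposition, and your route---intersecting the level-wise Banach equivalences $h^{s/d}(\bbN)\cong \scrH^s(M)$ to obtain the Fr\'echet isomorphism on $d(\bbN)$, then transposing for $d'(\bbN)$ and checking on finitely supported sequences that the transpose agrees with the explicit series map---is exactly the expected elaboration.
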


\subsection{Random Distributions} 
\label{subsec:random}
A \emph{random distribution}, also known as a $\scrD'(M)$-valued random variable, consists of the following data: a (not necessarily complete) probability space $(\Omega,\calF,\bbP)$ and a $(\calF,\operatorname{Borel}(\scrD'(M)))$-measurable function $\Phi :\Omega\to \scrD'(M)$. 
We will use the symbol `$\Phi$', with or without subscripts, to denote random distributions. The pushforward 
\begin{equation} 
\Phi_* \bbP:\operatorname{Borel}(\scrD'(M)) \to [0,1], \quad 
\Phi_* \bbP(S) = \bbP (\Phi^{-1} (S)),
\end{equation} 
of $\bbP$ by a random distribution $\Phi$ is a Borel probability measure on $\scrD'(M)$ called the \emph{law} of $\Phi$. Probabilists often use the word `distribution' instead of `law,'
but the former won't do here for obvious reasons. Despite the overloaded terminology, we will say that two random distributions are \emph{equidistributed} if they have the same law.

Given a Banach space $\scrX$, an $\scrX$-valued random variable is a $(\calF,\operatorname{Borel}(\scrX))$-measurable map $\Omega\to \scrX$. We only consider the case of separable $\scrX$ with separable dual in this paper. If $\Phi$ is an $\scrX$-valued random variable, then $\omega\mapsto \lVert \Phi(\omega) \rVert_{\scrX}$ is a nonnegative real-valued random variable. Likewise, $x^*\circ \Phi$ is a complex-valued random variable for each $x^* \in \scrX^*$. 

\begin{lemma} \label{lem:bof}
	Let $\scrX$ be an arbitrary separable Banach space. 
	Let $\scrX_{\mathrm{weak}}$ denote $\scrX$ endowed with the weak topology, $S\subseteq \scrX^*$ denote a (operator norm) dense set of functionals, and let  $\scrX_S$ denote $\scrX$ endowed with the corresponding weak topology. 
	
	Then, the three Borel $\sigma$-algebras 
	$\operatorname{Borel}(\scrX)$, $\operatorname{Borel}(\scrX_{\mathrm{weak}})$, $\operatorname{Borel}(\scrX_S)$ all agree:
	\begin{equation}
	\operatorname{Borel}(\scrX) = \operatorname{Borel}(\scrX_{\mathrm{weak}}) =  \operatorname{Borel}(\scrX_S).
	\label{eq:kk}
	\end{equation}
	Moreover,
	\begin{equation} 
	\operatorname{Borel}(\scrX_S)=\sigma(\Lambda \in S) 
	\label{eq:jj}
	\end{equation} 
	is the $\sigma$-algebra generated by the elements of $S$. (The same result holds as long as $S$ contains a countable subset $S_0$ such that $\lVert x \rVert_\scrX = \sup_{\Lambda \in S_0} |\Lambda x|$ for all $x \in \scrX$.)
\end{lemma}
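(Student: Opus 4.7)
I would split the chain of equalities into the easy inclusions $\operatorname{Borel}(\scrX_S)\subseteq\operatorname{Borel}(\scrX_{\mathrm{weak}})\subseteq\operatorname{Borel}(\scrX)$, which follow from the inclusions of topologies (stronger topologies have larger Borel $\sigma$-algebras), and the harder reverse chain. The inclusion $\sigma(\Lambda\in S)\subseteq\operatorname{Borel}(\scrX_S)$ is immediate since every $\Lambda\in S$ is by construction $\scrX_S$-continuous. Thus the whole result \cref{eq:kk}, \cref{eq:jj} reduces to proving that every norm-Borel set belongs to $\sigma(\Lambda\in S)$.

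To establish this, first use the separability of $\scrX$ to reduce to showing that every norm-closed ball $\bar{B}(y,r)$ lies in $\sigma(\Lambda\in S)$, since open sets in a separable metric space are countable unions of closed balls. Next, construct a countable norming subset $S_0\subseteq\scrX^*$: let $D\subseteq\scrX$ be countable and norm-dense, and apply Hahn-Banach to select, for each nonzero $x\in D$, a unit-norm $\Lambda_x\in\scrX^*$ with $\Lambda_x(x)=\lVert x\rVert$; a standard triangle-inequality argument (approximating an arbitrary $z\in\scrX$ by $x\in D$) then shows that $S_0:=\{\Lambda_x\}_{x\in D\setminus\{0\}}$ satisfies $\lVert z\rVert=\sup_{\Lambda\in S_0}|\Lambda z|$ for every $z\in\scrX$. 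Consequently
\begin{equation}
\bar{B}(y,r)=\bigcap_{\Lambda\in S_0}\{x\in\scrX:|\Lambda(x-y)|\leq r\},
\end{equation}
which exhibits the ball as a countable intersection of sets in $\sigma(\Lambda\in\scrX^*)$.

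The last step is to show $\sigma(\Lambda\in\scrX^*)\subseteq\sigma(\Lambda\in S)$, and this is where the norm-density hypothesis on $S$ enters. Given $\Lambda\in\scrX^*$, pick $\Lambda_n\in S$ with $\lVert\Lambda_n-\Lambda\rVert_{\scrX^*}\to 0$; then $\Lambda_n(x)\to\Lambda(x)$ pointwise in $x\in\scrX$ (in fact uniformly on bounded sets), so $\Lambda$ is $\sigma(\Lambda\in S)$-measurable as a pointwise limit of $\sigma(\Lambda\in S)$-measurable functions. This completes the argument. For the parenthetical strengthening, the pointwise-limit step is unnecessary: if a countable norming $S_0$ already lies in $S$, the displayed expression for $\bar{B}(y,r)$ directly exhibits it as a set in $\sigma(\Lambda\in S)$. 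The only genuinely delicate moment I anticipate is the Hahn-Banach construction of the norming $S_0$ — in particular, verifying the $\sup$ identity, not merely $\geq$ — while everything else is bookkeeping with $\sigma$-algebras, separability, and density.
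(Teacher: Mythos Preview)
Your argument is correct and follows essentially the same skeleton as the paper's: reduce to closed balls via separability, then express each closed ball as a countable intersection of functional sublevel sets using a countable norming family. The only organizational difference is that the paper takes the norming subset $S_0$ directly inside $S$ (quoting a standard consequence of Hahn--Banach for this), so closed balls land immediately in $\sigma(\Lambda\in S)$; you instead build $S_0\subseteq\scrX^*$ first and then add the pointwise-limit step $\sigma(\Lambda\in\scrX^*)\subseteq\sigma(\Lambda\in S)$ to bridge the gap. Both routes are clean; yours makes the parenthetical generalization slightly more transparent, while the paper's avoids the extra limit argument.
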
 
Recall that any operator norm dense set of functionals on a separable Banach space admits a countable subset with the property stated in parentheses above. See \cite[Lemma 6.7]{Carothers}.  (This is a consequence of the Hahn-Banach theorem.) Given such a countable subset, its elements all have operator norm at most one. One key technicality that we must confront below is that the natural notion in which the series below converge is in the topology of $\scrD'(M)$, which is much weaker than the topology of the function spaces we want to find the noises in. Results such as \Cref{lem:bof} will aid us in the deduction of strong convergence from convergence in these weak topologies. See \S\ref{sec:sazonov}. 
\begin{proof}
	Clearly, \cref{eq:kk} holds with `$\supseteq$' in place of `$=$,' $\operatorname{Borel}(\scrX) \supseteq \operatorname{Borel}(\scrX_{\mathrm{weak}}) \supseteq \operatorname{Borel}(\scrX_S) \supseteq \sigma(\Lambda \in S)$. 
	
	To see the converse, it suffices to note that $\operatorname{Borel}(\scrX)$ is generated by a collection of subsets of $\scrX$ which already lie in $\sigma(\Lambda \in S)$. The collection of closed balls in $\scrX$ works --- 
	\begin{itemize}
		\item the open balls in $\scrX$ generate $\operatorname{Borel}(\scrX)$ (by separability), and each open ball is a countable union of closed balls, and so the closed balls generate $\operatorname{Borel}(\scrX)$ as a $\sigma$-algebra, and 
		\item these are all convex and closed and therefore (by the Hahn-Banach theorem) weakly closed.
	\end{itemize}
	This proves the first equality in \cref{eq:kk}, and 
	since the closed balls are bounded (along with weakly closed) they are in fact closed in $\scrX_S$. This proves \cref{eq:kk}. 
	
	Given any $S_0\subseteq S$, $\sigma(\Lambda \in S_0) \subseteq \sigma(\Lambda \in S)$. Since $\scrX$ is separable, any operator norm dense set of functionals contains a countable subset $S_0$ such that $\lVert x \rVert_\scrX = \sup_{\Lambda \in S_0} |\Lambda x|$ for all $x$.
	Then the closed unit ball in $\scrX$ centered at the origin can be written as an intersection of countably many shifted half-spaces -- 
	\begin{equation}
	\Lambda^{-1} ((-\infty,N^{-1}+\lVert \Lambda \rVert_{\mathrm{op}}]) = \{x\in \scrX : \Lambda(x) \leq N^{-1}+\lVert \Lambda \rVert_{\mathrm{op}}\},
	\end{equation}
	$N\in \bbN^+$ (if $\scrX$ is a real Banach space, with a similar definition if $\scrX$ is a complex Banach space) -- 
	defined by the members of $S_0$. It is therefore in $\sigma(\Lambda \in S_0)$, hence in $\sigma(\Lambda \in S)$, and the same holds for the other closed balls.
\end{proof}
It's worth mentioning that when $\scrX$ is a separable Hilbert space -- e.g.\ the Sobolev spaces $\scrH^k(M,g)$ -- and $S$ contains an orthonormal basis, then the coincidences \cref{eq:kk}, \cref{eq:jj} of the $\sigma$-algebras above can easily be seen directly from the $\sigma(\Lambda\in S)$-measurability of the norm $\lVert - \rVert_\scrX : X\to \bbR$. Since this includes the application of interest here, the proof above is somewhat overkill, but the generality of the result (and the argument) is philosophically suggestive. We do not consider $L^p$-based Sobolev regularity here, but if we were to (as in \cite{SchwartzGeometry}\cite{Kusuoka}\cite{Veraar11}) it seems likely the preceding lemma -- in full generality -- would be useful. 

It follows from the previous lemma that, if $\scrX$ is separable, $\scrX$-valued random variables can be added, so the $\scrX$-valued random variables constitute a vector space --- cf.\ \cite{SchwartzRadon}. 
Moreover:

\begin{corollary}
	\label{cor:wl}
	Suppose that $\Phi_0,\Phi_1,\cdots : \Omega\to \scrX$ is a sequence of $\scrX$-valued random variables, where $\scrX$ is separable, $S\subset \scrX^*$ dense, and suppose that $\Phi_N(\omega)$ converges in $\scrX_S$ as $N\to\infty$ for $\bbP$-almost all $\omega\in \Omega$, i.e.\ for all $\omega$ in some $F\in \calF$ with $\bbP(F)=1$. 
	Then, setting
	\begin{equation}
	\Phi_\infty(\omega) = 
	\begin{cases}
	0 & (\omega\not\in F) \\
	\lim \Phi_n(\omega) & (\omega\in F),
	\end{cases} 
	\end{equation}
	where the limit is taken in $\scrX_S$, $\Phi_\infty:\Omega\to \scrX$ is a well-defined $\scrX$-valued random variable.
\end{corollary}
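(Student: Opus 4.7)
My plan is to reduce the $(\calF, \operatorname{Borel}(\scrX))$-measurability of $\Phi_\infty$ to the measurability of countably many scalar random variables, using the identification of Borel sets furnished by \Cref{lem:bof}. Well-definedness of $\Phi_\infty$ as a map $\Omega \to \scrX$ is automatic: for $\omega \in F$, the hypothesis provides an element of $\scrX$ to which $\Phi_N(\omega)$ converges in $\scrX_S$, and this limit is unique because $S \subseteq \scrX^*$ is operator-norm dense, hence separates points of $\scrX$ by Hahn-Banach (for any $x \neq 0$ one has some $\Lambda' \in \scrX^*$ with $\Lambda' x \neq 0$, and approximating $\Lambda'$ by elements of $S$ produces $\Lambda \in S$ with $\Lambda x \neq 0$). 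For $\omega \notin F$, the value of $\Phi_\infty$ has been stipulated by hand.

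To verify measurability, I would first invoke \Cref{lem:bof} (applied to a countable norming subset of $S$, which exists by the parenthetical remark in that lemma) to identify $\operatorname{Borel}(\scrX) = \sigma(\Lambda : \Lambda \in S)$, reducing the task to showing that $\Lambda \circ \Phi_\infty : \Omega \to \bbC$ is $\calF$-measurable for each $\Lambda \in S$. By the very definition of convergence in $\scrX_S$, on $F$ this function equals the pointwise limit $\lim_{N \to \infty} \Lambda(\Phi_N(\omega))$, while off $F$ it equals $0$. Each $\Lambda \circ \Phi_N$ is $\calF$-measurable as a composition of a continuous linear functional with an $\scrX$-valued random variable, and the pointwise $\limsup$ and $\liminf$ of a sequence of $\bbC$-valued $\calF$-measurable functions are $\calF$-measurable. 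Multiplying by the indicator $\mathbf{1}_F$ (measurable since $F \in \calF$) produces a measurable function agreeing with $\Lambda \circ \Phi_\infty$ on all of $\Omega$, as needed.

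I do not anticipate any substantive obstacle: the corollary is essentially a direct application of \Cref{lem:bof}, which already did the work of showing that the Borel $\sigma$-algebra of $\scrX$ is generated by any operator-norm dense family of functionals. The only mild bookkeeping point is the splitting of the definition of $\Phi_\infty$ across $F$ and its complement $\Omega \setminus F$, but the measurability of $F$ reduces this to an indicator-function multiplication and so is harmless.
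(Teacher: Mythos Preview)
Your proposal is correct and is exactly the intended argument: the paper states the corollary without proof, as an immediate consequence of \Cref{lem:bof}, and your write-up spells out precisely that deduction. One cosmetic point: $\limsup$ and $\liminf$ are not literally defined for $\bbC$-valued functions, so when you invoke them you should split $\Lambda\circ\Phi_N$ into real and imaginary parts (or simply cite the standard fact that a pointwise limit of measurable $\bbC$-valued functions, where it exists, is measurable).
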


\begin{lemma} 
\label{lem:measurability_lemma} 
 $\operatorname{Borel}(\scrD'(M)) = \sigma(\langle -,\phi_n \rangle_{\scrL^2(M,g)}:n\in \bbN)$. Consequently, a function $\Phi:\Omega\to \scrD'(M)$ is $(\calF,\operatorname{Borel}(\scrD'(M)))$-measurable if and only if 
\begin{equation} 
\langle \Phi(-), \phi_n \rangle_{\scrL^2(M,g)} : \Omega\to \bbC 
\label{eq:9}
\end{equation} 
is  Borel measurable for each $n\in \bbN$. 
\end{lemma}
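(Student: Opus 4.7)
The inclusion $\sigma(\langle -, \phi_n \rangle_{\scrL^2(M,g)} : n \in \bbN) \subseteq \operatorname{Borel}(\scrD'(M))$ follows immediately from the continuity of each functional $u \mapsto \langle u, \phi_n \rangle_{\scrL^2(M,g)}$ in the weak-$*$ topology of $\scrD'(M)$. For the converse, the plan is to transfer the claim via the TVS-isomorphism $\Sigma: d'(\bbN) \to \scrD'(M)$ of the preceding corollary to the statement $\operatorname{Borel}(d'(\bbN)) = \sigma(\pi_n : n \in \bbN)$, where $\pi_n : d'(\bbN) \to \bbC$ is the $n$-th coordinate projection; this corresponds under $\Sigma$ to $\langle -, \phi_n \rangle_{\scrL^2(M,g)}$, since $\pi_n$ is weak-$*$ continuous as the pairing with the standard basis vector $e_n \in d(\bbN)$.

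First, I would cover $d'(\bbN) = \bigcup_{k,R \in \bbN^+} B_R^{(-k)}$ by the countable family of $h^{-k}$-balls $B_R^{(-k)} = \{\bfa : \lVert \bfa \rVert_{h^{-k}} \leq R\}$. Each ball is weak-$*$ closed in $d'(\bbN)$ (being the intersection $\bigcap_{N} \{\bfa : \sum_{n=0}^N (1+n)^{-2k} |a_n|^2 \leq R^2\}$ of sets defined by the continuous functionals $\pi_0, \ldots, \pi_N$) and lies in $\sigma(\pi_n : n)$ for the same reason.

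The key step, and the main obstacle, is to verify that on each $B_R^{(-k)}$ the weak-$*$ topology inherited from $d'(\bbN)$ coincides with the product topology on $\bbC^\bbN$ restricted to $B_R^{(-k)}$. Each $\pi_n$ is continuous in both, so it suffices to check that every weak-$*$ continuous functional is a uniform limit on $B_R^{(-k)}$ of product-continuous maps. Given $\bfb \in d(\bbN)$, the Cauchy--Schwarz inequality yields
\begin{equation}
\sup_{\bfa \in B_R^{(-k)}} \Bigl| \sum_{n > N} a_n^* b_n \Bigr| \leq R \Bigl( \sum_{n > N} (1+n)^{2k} |b_n|^2 \Bigr)^{1/2},
\end{equation}
and the right-hand side tends to $0$ as $N \to \infty$, because $\bfb \in d(\bbN) \subseteq h^{k+1}(\bbN)$ guarantees that $\sum_n (1+n)^{2k} |b_n|^2 < \infty$. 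Hence the weak-$*$ continuous functional $\bfa \mapsto \langle \bfa, \bfb \rangle_{\ell^2}$ is a uniform limit on $B_R^{(-k)}$ of the product-continuous finite sums $\bfa \mapsto \sum_{n=0}^N a_n^* b_n$, and is therefore continuous in the product topology.

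Since the relative topologies agree, the relative Borel $\sigma$-algebras do too, and both are generated by $\{\pi_n|_{B_R^{(-k)}}\}_{n \in \bbN}$. For any $E \in \operatorname{Borel}(d'(\bbN))$, each intersection $E \cap B_R^{(-k)}$ lies in this relative $\sigma$-algebra, so there exists $A_{k,R} \in \sigma(\pi_n : n)$ with $E \cap B_R^{(-k)} = A_{k,R} \cap B_R^{(-k)}$. Then $E = \bigcup_{k,R}(A_{k,R} \cap B_R^{(-k)}) \in \sigma(\pi_n : n)$, completing the identity. The measurability characterization is then immediate: a map into a measurable space is measurable iff preimages of a generating family of its $\sigma$-algebra are measurable.
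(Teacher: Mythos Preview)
Your proof is correct and follows the same overall architecture as the paper's: both arguments exhaust $\scrD'(M)$ by a countable family of pieces already lying in $\sigma(\langle -,\phi_n\rangle : n\in\bbN)$, and then verify that on each piece the relative Borel structure is generated by the coordinate functionals. The tactical difference lies in the choice of pieces and in how the relative claim is established. The paper uses the full Sobolev spaces $\scrH^K(M)$ (equivalently $h^{K/d}(\bbN)$ under $\Sigma$) and invokes the preceding \Cref{lem:bof} on separable Banach spaces to identify $\operatorname{Borel}(\scrH^K(M))$ with the $\sigma$-algebra generated by the $\langle -,\phi_n\rangle$; you instead use closed balls $B_R^{(-k)}$ and give a direct, self-contained argument that the inherited weak-$*$ topology on each ball coincides with the product topology (via the uniform Cauchy--Schwarz estimate), so that the corresponding Borel $\sigma$-algebras agree. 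Your route is slightly more elementary in that it does not appeal to the Hahn--Banach-flavored \Cref{lem:bof}, at the cost of carrying out the uniform approximation by hand; the paper's route is more modular, reusing a lemma already in place. Either way the proof goes through.
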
 
\begin{proof} 
The half of the result stating that 
\begin{equation} 
	\operatorname{Borel}(\scrD'(M)) \supseteq \sigma(\langle -,\phi_n \rangle_{\scrL^2(M,g)}:n\in \bbN)
\end{equation} 
is obvious. 

Since, for any $K\in \bbR$, the inclusion $\smash{\scrH^K(M)}\hookrightarrow \scrD'(M)$ is continuous (and therefore measurable), if $S \in \operatorname{Borel}(\scrD'(M))$ then $\scrH^K(M) \cap S$ can be considered as a Borel subset of $\scrH^K(M)$. 
By \Cref{lem:bof},
\begin{equation} 
	\operatorname{Borel}(\scrH^K(M))= \sigma(\langle -,\phi_n \rangle_{\scrL^2(M,g)}|_{\scrH^K(M)}:n\in \bbN).
\end{equation} 
Any element of $\sigma(\langle -,\phi_n \rangle_{\scrL^2(M,g)}|_{\scrH^K(M)}:n\in \bbN)$ has the form $\scrH^K(M)\cap S_0$ for some 
\begin{equation} 
	S_0 \in \sigma(\langle -,\phi_n \rangle_{\scrL^2(M,g)}:n\in \bbN) \subseteq \scrD'(M).
\end{equation} 
So, there exists some such $S_0$ such that $\scrH^K(M)\cap S=\scrH^K(M)\cap S_0$.
Because $\scrH^K(M)=\scrH^K(M,g)$ is in $\sigma(\langle -,\phi_n \rangle_{\scrL^2(M,g)}:n\in \bbN)$ as well, we deduce that 
\begin{equation}
	\scrH^K(M) \cap S \in \sigma(\langle -,\phi_n \rangle_{\scrL^2(M,g)}:n\in \bbN),
\end{equation}
for every $K\in \bbR$. By the Schwartz representation theorem, $\scrD'(M)=\cup_{K\in \bbZ} \scrH^K(M)$, so $S = \cup_{K\in \bbZ} (\scrH^K(M) \cap S) \in \sigma(\langle -,\phi_n \rangle_{\scrL^2(M,g)}:n\in \bbN)$. 
\end{proof} 

Consequently, the sum of two random distributions is a random distribution. (Of course, this can also be proven more directly.) The set of random distributions on $(\Omega,\calF,\bbP)$ is therefore a complex vector space, a fact which we will use without comment below. The analogue of \Cref{cor:wl} holds for random distributions. See \cite[Part II-Chp.\ V]{SchwartzRadon} for a treatment of random vectors (including more general random elements of LCTVSs, not just random elements of Banach and conuclear spaces). We will not be entirely thorough when it comes to spelling out the meaning of terminology which should be interpretable without confusion, but \cite{SchwartzRadon} can be referred to for details (if needed).

Note that if $\scrX\subseteq \scrD'(M)$ is a continuously embedded function space, then an $\scrX$-valued random variable can naturally be considered as a random distribution. In fact the converse is true for the function spaces we consider (namely the $L^2$-based Sobolev spaces, but this also applies to $L^p$-based spaces for $p\in (1,\infty)$, as well as some other more exotic spaces -- e.g. the Besov \cite{Veraar11} or Triebel-Lizorkin spaces -- that arise in functional and harmonic analysis):
\begin{corollary}	
\label{cor:version}
Suppose that we are given a random distribution $\Phi:\Omega \to \scrD'(M)$ and a Banach space $\scrX$ with $\scrD(M)\subseteq \scrX \subseteq \scrD'(M)$, where the inclusions are continuous (which in our terminological usage is what it means for $\scrX$ to be a function space) and suppose further that $\scrD(M)\subseteq \scrX$ is dense, so that we have a natural identification of $\scrX^*$ with a function space:
\begin{equation} 
\scrD(M)\subseteq \scrX^* \subseteq \scrD'(M),
\end{equation} 
with the inclusions continuous. Suppose further that $\scrD(M)\subseteq \scrX^*$ is dense in $\scrX^*$ (in the operator norm topology).
If it is the case that $\Phi(\omega) \in \scrX$ for $\bbP$-almost all $\omega\in\Omega$, then there exists a full measure subset $F\in \calF$ and an $\scrX$-valued random variable 
\begin{equation} \Phi_\scrX:\Omega\to \scrX  
\label{eq:xx}
\end{equation} 
(a ``version'' of $\Phi$) such that $\Phi_{\scrX}(\omega) = \Phi(\omega)$ for all $\omega\in F$. 
\end{corollary}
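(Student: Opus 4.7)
The plan is to take $\Phi_\scrX$ to be the obvious modification of $\Phi$: set $F \in \calF$ to be a set of full measure such that $\Phi(\omega) \in \scrX$ for all $\omega \in F$ (which exists by the almost-sure hypothesis and our convention about ``almost surely''), and define $\Phi_\scrX(\omega) = \Phi(\omega)$ on $F$ and $\Phi_\scrX(\omega) = 0$ on $\Omega \setminus F$. The entire content of the corollary is then to verify that this $\Phi_\scrX$ is $(\calF, \operatorname{Borel}(\scrX))$-measurable when we view its codomain as $\scrX$ rather than $\scrD'(M)$.

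To verify the measurability, the strategy is to invoke \Cref{lem:bof}. Under the standing assumption that the $L^2$-based Sobolev spaces we care about are separable (and in our applications they are, with separable duals as well), $\scrX$ is a separable Banach space and $\scrD(M) \subseteq \scrX^*$ is a dense subspace. Consequently, we may extract a countable sequence $\{\varphi_n\}_{n=0}^\infty \subseteq \scrD(M)$ which is dense in $\scrX^*$ in the operator norm, and then \Cref{lem:bof} (applied to $\scrX$ with $S$ taken to be this sequence, or the dense subspace it spans) gives
\begin{equation}
	\operatorname{Borel}(\scrX) = \sigma(\langle \varphi_n,-\rangle_{\scrL^2(M,g)}|_\scrX : n \in \bbN).
\end{equation}
It therefore suffices to show, for each fixed $n \in \bbN$, that the scalar function $\omega \mapsto \langle \varphi_n, \Phi_\scrX(\omega)\rangle_{\scrL^2(M,g)}$ is Borel measurable on $\Omega$.

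For this, observe that on $F$ the function agrees with $\omega \mapsto \langle \varphi_n, \Phi(\omega)\rangle_{\scrL^2(M,g)}$, which is the composition of the random distribution $\Phi : \Omega \to \scrD'(M)$ with the weak-$*$ continuous (hence Borel) linear functional $\langle \varphi_n, - \rangle_{\scrL^2(M,g)} : \scrD'(M) \to \bbC$. On $\Omega \setminus F$ the function is identically zero. Since $F \in \calF$, the full function is measurable, as a piecewise construction from two measurable pieces on measurable sets. This yields the measurability of $\Phi_\scrX$, completing the proof.

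The only subtle point -- and the step most easily overlooked -- is the compatibility of the two competing continuous pairings: $\langle \varphi_n, -\rangle_{\scrL^2(M,g)}$ is continuous both on $\scrD'(M)$ (by the weak-$*$ topology) and on $\scrX$ (since $\varphi_n \in \scrX^*$), and these restrict to the same function on the continuously embedded image of $\scrX$ in $\scrD'(M)$. This compatibility, built into the corollary's hypothesis that $\scrX \hookrightarrow \scrD'(M)$ is a continuous embedding which identifies $\scrX^*$ with a space of distributions via $\iota_g$, is what lets us pull measurability back from $\scrX$-valued tests to the already established $\scrD'(M)$-measurability of $\Phi$. Everything else is bookkeeping.
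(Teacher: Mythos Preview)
Your proof is correct and is precisely the argument the paper has in mind: the corollary is stated without proof, being an immediate consequence of \Cref{lem:bof} applied with $S$ a countable subset of $\scrD(M)\subseteq \scrX^*$, exactly as you carry out. One small remark: you need not \emph{assume} separability of $\scrX$, since $\scrD(M)$ is itself separable and is dense in $\scrX$ by hypothesis, so separability of $\scrX$ is automatic.
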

We will use this result without comment.
\begin{remark} 
We now say a word about regularity in the measure-theoretic sense. 
Since $\scrD'(M) = \cup_{s\in \bbZ} \scrH^s(M)$ is a countable union of the continuous images of the Polish spaces $\scrH^s(M)$ under the embeddings $\scrH^s(M)\hookrightarrow \scrD'(M)$, every Borel probability measure on $\scrD'(M)$ is Radon. In particular, $\Phi_* \bbP$ is Radon. In other words, $\scrD'(M)$ is a Suslin space, and every Borel probability measure on every Suslin space is Radon. Cf.\ \cite[Part I-Chapter II]{SchwartzRadon}. 
\end{remark} 
Suppose we are given a sequence 
\begin{equation} 
\gamma_0,\gamma_1,\gamma_2 , \gamma_3,\cdots : \Omega\to \bbR
\end{equation}  
of independent Gaussian random variables, $\gamma_n \sim \operatorname{N}(\mu_n,\sigma_n)$, whose means $\mu_n \in \bbR$ and standard deviations $\sigma_n \in [0,\infty)$. Here $\operatorname{N}(\mu_n,\sigma_n) : \operatorname{Borel}(\bbR)\to [0,1]$ is the law of a Gaussian with mean $\mu_n$ and standard deviation $\sigma_n$, and `$\sim$' means equidistributed. The Radon-Nikodym derivative of $\operatorname{N}(\mu_n,\sigma_n)$ is given by $\mathrm{d} \mathrm{N}(\mu_n,\sigma_n)/\mathrm{d} \lambda^1 (\gamma) =  (1/\sigma_n) (2\pi)^{-1/2}\exp(- (\gamma-\mu_n)^2 / (2\sigma_n^2))$ in the case $\sigma_n>0$ and $\mathrm{N}(\mu_n,0) = \delta_{\mu_n}$ in the remaining, ``degenerate,'' case. 
We suppose that $\mu_n,\sigma_n$ grow at most polynomially in $n$. 
This latter locution means that there exist some $\{\gamma_n\}_{n\in \bbN}$-dependent constants  $C >0,p\in \bbR$ such that 
\begin{equation}
|\mu_n| , \sigma_n \leq C (n+1)^p 
\label{eq:parameters_upper_bound} 
\end{equation} 
for all $n\in \bbN$. 
We consider, for each $\omega \in \Omega$ and $N\in \bbN$, the partial series 
\begin{equation}
\Phi_N(\omega) = \sum_{n=0}^N \gamma_n(\omega) \phi_n  \in \scrD(M). 
\label{eq:Psi_partial}
\end{equation}
Each of these is a random distribution. Then, $\Phi_N$ converges in $\scrD'(M)$ almost surely as $N\to\infty$:
\begin{lemma}
	\label{global_regularity}
	Under the assumption \cref{eq:parameters_upper_bound}, for $\bbP$-almost all $\omega \in \Omega$ there exists some distribution $\Phi(\omega) \in \cap_{\varepsilon>0} \scrH^{-d(p+1/2+\varepsilon)}(M)$ such that   
	\begin{equation} 
	\Phi_N(\omega)\to \Phi(\omega)
	\label{eq:convergence}
	\end{equation} 
	 in $\scrH^{-d(p+1/2+\varepsilon)}(M)$ as $N\to\infty$ for each $\varepsilon>0$.
\end{lemma}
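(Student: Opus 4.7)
The plan is to use the isomorphism $\Sigma: h^\varsigma(\bbN) \to \scrH^{\varsigma d}(M)$ of \Cref{sequence_Sobolev_correspondence} to translate the convergence question into a purely sequential one. Setting $s = -d(p + 1/2 + \varepsilon)$ and $\varsigma = s/d = -(p+1/2+\varepsilon)$, the partial sum $\Phi_N(\omega)$ is the image under $\Sigma$ of the truncated sequence $(\gamma_0(\omega), \ldots, \gamma_N(\omega), 0, 0, \ldots)$. Since $\Sigma$ restricts to a Banach space isomorphism $h^\varsigma(\bbN) \to \scrH^s(M)$, it suffices to show that almost surely the sequence $\{\gamma_n(\omega)\}_{n=0}^\infty$ lies in $h^\varsigma(\bbN)$ and that the truncations converge to it in that space, i.e.\ that
\begin{equation}
\sum_{n=0}^\infty (1+n)^{-2(p+1/2+\varepsilon)} |\gamma_n(\omega)|^2 < \infty
\end{equation}
for $\bbP$-almost every $\omega$ (since the tail of this convergent series then gives the norm of $\Phi(\omega) - \Phi_N(\omega)$ up to a constant).

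The main step is a second-moment estimate. Since $\gamma_n \sim \operatorname{N}(\mu_n,\sigma_n)$ and the $\gamma_n$ are independent, $\bbE[|\gamma_n|^2] = \mu_n^2 + \sigma_n^2$, and the polynomial bound \cref{eq:parameters_upper_bound} yields $\bbE[|\gamma_n|^2] \leq 2C^2 (1+n)^{2p}$. By Tonelli's theorem,
\begin{equation}
\bbE\!\left[\sum_{n=0}^\infty (1+n)^{-2(p+1/2+\varepsilon)} |\gamma_n|^2\right] \leq 2C^2 \sum_{n=0}^\infty (1+n)^{-1-2\varepsilon} < \infty,
\end{equation}
so the random variable inside the expectation is almost surely finite. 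This shows, for each fixed $\varepsilon > 0$, that $\{\gamma_n(\omega)\} \in h^{-(p+1/2+\varepsilon)}(\bbN)$ almost surely, and hence $\Phi_N(\omega) \to \Phi(\omega) := \Sigma(\{\gamma_n(\omega)\})$ in $\scrH^{-d(p+1/2+\varepsilon)}(M)$ almost surely.

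To obtain a single full-measure event that works simultaneously for all $\varepsilon > 0$, I would apply the above to the countable sequence $\varepsilon_k = 1/k$ and intersect the corresponding almost sure events. Since the Sobolev spaces are nested --- $\scrH^{-d(p+1/2+\varepsilon)}(M) \subseteq \scrH^{-d(p+1/2+\varepsilon')}(M)$ for $\varepsilon \leq \varepsilon'$ --- the limiting distribution $\Phi(\omega)$ is independent of $\varepsilon$ (it is well-defined already as an element of $\scrD'(M)$, the limit being taken in the weaker ambient topology), and on this intersection $\Phi(\omega)$ lies in $\scrH^{-d(p+1/2+\varepsilon)}(M)$ for every $\varepsilon > 0$. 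There is essentially no obstacle; the only point requiring mild care is the measurability of the limit $\Phi$, which follows from \Cref{cor:wl} (or, more concretely, from \Cref{lem:measurability_lemma}, since the Fourier coefficients $\langle \Phi(\omega), \phi_n \rangle_{\scrL^2}$ are precisely $\gamma_n(\omega)$ by construction).
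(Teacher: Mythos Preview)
Your proof is correct and follows the same overall strategy as the paper: reduce the convergence in $\scrH^s(M)$ to membership of $\{\gamma_n(\omega)\}$ in $h^\varsigma(\bbN)$ via the isomorphism $\Sigma$ of \Cref{sequence_Sobolev_correspondence}. The only genuine difference is in how you establish that membership. The paper invokes the Borel--Cantelli lemma to obtain the pointwise growth bound $\limsup_{n\to\infty} n^{-p-\varepsilon}|\gamma_n(\omega)| < \infty$ almost surely, and then deduces $\{\gamma_n(\omega)\}\in h^{-p-2\varepsilon-1/2}(\bbN)$ from that. You instead compute the expectation $\bbE\bigl[\sum_n (1+n)^{-2(p+1/2+\varepsilon)}|\gamma_n|^2\bigr]$ directly via Tonelli and conclude finiteness almost surely. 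Your route is slightly more economical and lands exactly at the target exponent $\varsigma = -(p+1/2+\varepsilon)$, whereas the paper's pointwise bound costs an extra $\varepsilon$ (harmless, since $\varepsilon>0$ is arbitrary). The Borel--Cantelli approach does yield a marginally stronger intermediate statement (an almost sure pointwise growth rate for $\gamma_n$), but that is not used anywhere else. Your handling of the countable intersection over $\varepsilon_k = 1/k$ is also fine and makes explicit something the paper leaves implicit.
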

\begin{proof}
	By the Borel-Cantelli lemma, for each $\varepsilon>0$, $\limsup_{n\to\infty} n^{-p-\varepsilon}|\gamma_n(\omega)| <\infty $ for $\bbP$-almost all $\omega \in \Omega$. This implies that $\{\gamma_n(\omega)\}_{n=0}^\infty \in h^\varsigma(\bbN)$ -- for some $\varsigma$ independent of $\omega$ --  $\bbP$-almost surely. In particular, we can take $\varsigma = -p-2\varepsilon-1/2$. So for almost all $\omega$, the cutoff sequences $\{\gamma_{n,N}(\omega)\}_{n=0}^\infty$ defined by 
	\begin{equation}
	\gamma_{n,N}(\omega) = 
	\begin{cases}
	\gamma_n & (n\leq N) \\
	0 & (n>N) 
	\end{cases}
	\end{equation}
	satisfy $\{\gamma_{n,N}(\omega)\}_{n=0}^\infty \to \{\gamma_n(\omega)\}_{n=0}^\infty$ in $h^\varsigma(\bbN)$ as $N\to\infty$. 
	By the continuity of $\Sigma:h^\varsigma(\bbN)\to \scrH^s(M)$ for $s=d\varsigma$, this implies that $\Phi_N(\omega) = \Sigma \{\gamma_{n,N}(\omega)\}_{n=0}^\infty  $ converges to $\Sigma \{\gamma_n(\omega)\}_{n=0}^\infty \in \scrH^s(M)$ for almost all $\omega \in \Omega$.
\end{proof}
Technically, $\Phi(\omega)$ is a priori only defined for $\bbP$-almost all $\omega$, but going forwards we can modify our probability space so that \cref{eq:convergence} holds for all $\omega \in \Omega$ and all $\varepsilon>0$, in which case $\Phi$ is a well-defined function $\Phi:\Omega\to \scrD'(M)$, 
\begin{equation}
\Phi(\omega) = \sum_{n=0}^\infty \gamma_n(\omega) \phi_n = \lim_{N\to\infty} \sum_{n=0}^N \gamma_n(\omega)\phi_n \in \scrD'(M),
\label{eq:Phi}
\end{equation}   
and moreover $\Phi:\Omega \to \scrH^s(M)$ for $s$ as in the lemma. Since for any $\varphi \in \scrD(M)$, $\langle \Phi ,\varphi \rangle_{\scrL^2} = \lim_{N\to\infty} \langle \Phi_N, \varphi \rangle_{\scrL^2}$ is a limit of measurable functions, it is measurable, and therefore by the criterion \Cref{lem:measurability_lemma}, \cref{eq:Phi} defines a random distribution.  

If in addition to the upper estimate \cref{eq:parameters_upper_bound} we have a lower estimate, then it can be checked that we have a guaranteed amount of irregularity. More precisely:
\begin{proposition}
	Suppose that $\max\{|\mu_n|,\sigma_n\} \geq c (1+n)^q$
	for some $c>0$ and $q\in \bbR$ and a positive density subset of $n\in \bbN$, meaning that 
	\begin{equation} 
	\limsup_{N\to \infty} \frac{1}{N}\# \{n\leq N : \max\{|\mu_n|,\sigma_n\} \geq c (1+n)^q\}>0.
	\label{eq:ls}
	\end{equation} 
	Alternatively, suppose that $\{\mu_n\}_{n=0}^\infty \notin h^{-q-1/2}(\bbN)$. 
	In either case,  
	\begin{equation}	
	\Phi(\omega) \notin \scrH^{-d(q+1/2)}(M) 
	\label{eq:n}
	\end{equation}
for $\bbP$-almost all $\omega \in \Omega$. 
\end{proposition}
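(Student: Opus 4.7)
By the isomorphism $\Sigma$ of \Cref{sequence_Sobolev_correspondence}, $\Phi(\omega) \in \scrH^{-d(q+1/2)}(M)$ if and only if
\begin{equation}
T(\omega) := \sum_{n=0}^\infty (1+n)^{-2q-1}\gamma_n(\omega)^2
\end{equation}
is finite. Because the $\gamma_n$ are independent and $\{T<\infty\}$ lies in their tail $\sigma$-algebra, Kolmogorov's zero-one law gives $\bbP(T<\infty)\in\{0,1\}$, so it suffices to show $\bbP(T=\infty)>0$.

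The unified strategy is to produce, in each of the two cases, a family of independent nonnegative random variables $X_n = a_n 1_{E_n}$ satisfying $X_n \leq (1+n)^{-2q-1}\gamma_n^2$, $\inf_n \bbP(E_n) \geq \alpha$ for some constant $\alpha>0$, and $\sum_n a_n = \infty$. Given this, Kolmogorov's three-series theorem forbids almost-sure convergence of $\sum_n X_n$: if infinitely many $a_n$ exceed $1$, criterion (a) fails because $\sum_n \bbP(X_n > 1) \geq \alpha \cdot \#\{n:a_n>1\} = \infty$; otherwise, after discarding the finitely many indices with $a_n > 1$, we have $a_n \leq 1$ throughout and criterion (b) fails, since $\sum_n \bbE[X_n 1_{X_n \leq 1}] = \sum_n a_n \bbP(E_n) \geq \alpha \sum_n a_n = \infty$. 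By nonnegativity, $\sum_n X_n = +\infty$ almost surely, hence $T = \infty$ almost surely.

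Under the first hypothesis, set $E_n = \{\gamma_n^2 \geq \bbE[\gamma_n^2]/8\}$ and, for $n\in S$, $a_n = c^2/(8(1+n))$ (with $E_n,a_n$ trivial off $S$). The Paley-Zygmund inequality combined with the elementary Gaussian identity $\bbE[\gamma_n^4] \leq 3\bbE[\gamma_n^2]^2$ yields $\bbP(E_n) \geq (7/8)^2/3 = 49/192$. For $n\in S$, $\bbE[\gamma_n^2] = \mu_n^2+\sigma_n^2 \geq c^2(1+n)^{2q}$ gives $X_n \leq (1+n)^{-2q-1}\gamma_n^2$ on $E_n$; and the positive-density hypothesis \cref{eq:ls} ensures $\sum_{n\in S}(1+n)^{-1} = \infty$ via a short Abel summation argument.

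Under the second hypothesis, set $E_n = \{|\gamma_n| \geq |\mu_n|/2\}$ and $a_n = (1+n)^{-2q-1}\mu_n^2/4$. Since $\gamma_n \sim \operatorname{N}(\mu_n,\sigma_n^2)$ is symmetric about its mean $\mu_n$, the event $\{\gamma_n \geq \mu_n\}$ (or $\{\gamma_n \leq \mu_n\}$ when $\mu_n < 0$) has probability $\geq 1/2$ and forces $|\gamma_n| \geq |\mu_n|$, so $\bbP(E_n) \geq 1/2$; and $\sum_n a_n = \infty$ is precisely the standing hypothesis. The only real hurdle is the three-series verification described earlier: once the events $E_n$ and weights $a_n$ are identified, the rest is routine, but some care is needed to separate the regimes $a_n \leq 1$ and $a_n > 1$.
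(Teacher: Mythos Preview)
Your proof is correct and takes a genuinely different route from the paper's. The paper argues by contradiction: assuming $\Phi\in\scrH^{-d(q+1/2)}(M)$ with positive probability, it squares the probability space and uses the symmetrization $\Phi_{\mathrm{L}}-\Phi_{\mathrm{R}}\sim\sqrt{2}\bigl(\Phi-\sum_n\mu_n\phi_n\bigr)$ to decouple the deterministic mean series $\sum_n\mu_n\phi_n$ from the centered Gaussian series $\sum_n\sigma_n\tilde{\gamma}_n\phi_n$, and then treats each piece separately (the density hypothesis forces one of $|\mu_n|$ or $\sigma_n$ to be large on a positive-density set, contradicting membership of the corresponding piece in the target Sobolev space). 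You instead work directly with $\gamma_n^2$ and avoid the symmetrization entirely: the Paley--Zygmund bound on $\gamma_n^2$ absorbs the mean and variance contributions simultaneously via $\bbE[\gamma_n^2]=\mu_n^2+\sigma_n^2$, and your unified ``$X_n=a_n1_{E_n}$'' framework reduces both hypotheses to a single three-series verification. Your argument is more self-contained and elementary; the paper's rerandomization trick is more conceptual and is a recurring device elsewhere in the paper (cf.\ the proof of \Cref{lem:auxiliary} and the remarks in the introduction).
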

\begin{proof} 
	Clearly, $\{\omega\in \Omega : \Phi(\omega) \in \scrH^{-d(q+1/2)}(M)\}$ is in the tail $\sigma$-algebra $\cap_{N \in \bbN} \sigma(\gamma_N,\gamma_{N+1},\cdots)$. By the Kolmogorov zero-one law, this event either has probability zero or probability one.
	
	Suppose that $\Phi(\omega) \in \scrH^{-d(q+1/2)}(M)$ for $\omega$ in a set of positive probability, so that it occurs almost surely. 
	We can consider the square of our probability space and define two independent random distributions 
	\begin{equation} 
	\Phi_{\mathrm{L}}:\Omega^2\to \scrD'(M), \Phi_{\mathrm{R}}:\Omega^2\to \scrD'(M),
	\end{equation} where each of $\Phi_{\mathrm{L}},\Phi_{\mathrm{R}}$ is equidistributed with $\Phi$ and we use L/R to denote the left/right factors of the square.
	(So, letting $\pi_{\mathrm{L}}:\Omega^2\to \Omega$ and $\pi_{\mathrm{R}}:\Omega^2\to \Omega$ denote projection onto the left/right factors respectively, $\Phi_{\mathrm{L}} = \Phi \circ \pi_{\mathrm{L}}$ and $\Phi_{\mathrm{R}} = \Phi\circ \pi_{\mathrm{R}}$. )
	
	Clearly, given our supposition, the independent random distributions $\Phi_{\mathrm{L}},\Phi_{\mathrm{R}}$ are both in $\scrH^{-d(q+1/2)}(M)$ almost surely, and so their difference $\Phi_{\mathrm{L}}-\Phi_{\mathrm{R}}$ lies in $\scrH^{-d(q+1/2)}(M)$ almost surely as well. Note that $\Phi_{\mathrm{L}}-\Phi_{\mathrm{R}}$ is equidistributed with 
	\begin{equation}
	\sqrt{2} \Big(\Phi - \sum_{n=0}^\infty \mu_n \phi_n  \Big)\sim \sum_{n=0}^\infty \sqrt{2} \sigma_n \tilde{\gamma}_n \phi_n,  
	\label{eq:us}
	\end{equation}
	where $\tilde{\gamma}_0,\tilde{\gamma}_1,\tilde{\gamma}_2,\ldots$ are arbitrary i.i.d.\ standard Gaussian random variables on some probability space. Since the left-hand side and the first term of the left-hand side of \cref{eq:us} are together in $\scrH^{-d(q+1/2)}(M)$ almost surely, we conclude that the second term on the left-hand side is as well. By equidistribution, the right-hand side of \cref{eq:us} is in $\scrH^{-d(q+1/2)}(M)$ almost surely. To summarize, if $\Phi \in \scrH^{-d(q+1/2)}(M)$ with positive probability, then 
	\begin{equation}
	\sum_{n=0}^\infty \mu_n \phi_n \in \scrH^{-d(q+1/2)}(M) \quad \text{ and } \quad \sum_{n=0}^\infty \sigma_n \tilde{\gamma}_n \phi_n \in \scrH^{-d(q+1/2)}(M)
	\label{eq:z}
	\end{equation}
	with positive probability. 
	If the hypothesis \cref{eq:ls} of the proposition holds, then either 
	\begin{itemize} \item $|\mu_n| \geq c(1+n)^q$ for a positive density subset of $n\in \bbN$ or 
	\item  
		$\;\sigma_n\; \geq c(1+n)^q$ for a positive density subset of $n$. 
	\end{itemize} The former is clearly inconsistent with the first condition in \cref{eq:z}, and the latter can be shown to be inconsistent with the second condition in a few different ways, e.g.\ using Kolmogorov's three series theorem, cf.\ \cite[Chapter 22, Theorem 22.8]{Billingsley}. Likewise, if $\{\mu_n\}_{n=0}^\infty \notin h^{-q-1/2}(\bbN)$, then $\Sigma \{\mu_n\}_{n=0}^\infty \notin \scrH^{-d(q+1/2)}(M)$, which contradicts the first condition in \cref{eq:z}. 
\end{proof}

Combining the previous propositions:

\begin{propositionp}
	\label{global_regularity_sharp} 
	If the standard deviations $\sigma_n$ and means $\mu_n$ of our random Gaussian coefficients $\gamma_n$ satisfy the conditions 
	\begin{equation} 
	0<\liminf_{n\to\infty} (1+n)^{\varsigma}\sigma_n \leq \sup_{n\in \bbN} (1+n)^{\varsigma}  \sigma_n < \infty 
	\end{equation}
	and $\{\mu_n\}_{n=0}^\infty \in h^{\varsigma-1/2}(\bbN)$, 
	then 
	for $\bbP$-almost all $\omega \in \Omega$ the partial series $\Phi_N(\omega)$ converges as $N\to\infty$ to some 
	\begin{equation}
	\Phi(\omega) \in \cap_{\varepsilon>0} \scrH^{d(\varsigma-1/2-\varepsilon)}(M) \backslash \scrH^{d(\varsigma-1/2)}(M) 
	\end{equation}
	in the topology of $\scrH^{d(\varsigma-1/2-\varepsilon)}(M,g)$ for any $\varepsilon>0$. 
	$\Phi$ is a random distribution, and in particular by \Cref{cor:version} admits a $\smash{\scrH^{d(\varsigma-1/2-\varepsilon)}(M,g)}$-valued version. 
\end{propositionp}

The preceding argument does not yield negative results regarding local regularity because given a bump function $\chi \in \scrD(M)$, the functions $
\chi \phi_0, \chi \phi_1, \chi \phi_2, \ldots $
are no longer typically orthogonal elements of $\scrL^2(M,g)$. We need a more sophisticated analysis. The full local and microlocal result, \Cref{thm:main}, given in \S\ref{sec:main_theorem}, refines \Cref{global_regularity_sharp}.

\section{A consequence of Sazonov's theorem}
\label{sec:sazonov}
In this section, we assume that $\mu_n = 0$, so that $\gamma_0,\gamma_1,\gamma_2,\cdots:\Omega\to \bbR$ are independent centered Gaussian random variables. 
Given a fixed sequence ${\bm\sigma}=\{\sigma_n\}_{n=0}^\infty$ of $\sigma_n>0$, let $h_{\bm\sigma}(\bbN)$ denote the Hilbert space which as a set is given by 
\begin{equation}
h_{\bm\sigma} (\bbN) = \{ \{a_n\}_{n=0}^\infty \subset \bbC : \sum_{n=0}^\infty \sigma_n^{-2} |a_n|^2  < \infty \} 
\end{equation}
and whose inner product $\langle -,- \rangle_{h_{\bmsigma}} : h_{\bmsigma}(\bbN)\times h_{\bmsigma}(\bbN)\to \bbC$ is given by $\langle \{a_n\}_{n=0}^\infty , \{b_n\}_{n=0}^\infty \rangle_{h_{\bmsigma}} = \sum_{n=0}^\infty |\sigma_n|^{-2} a_n^* b_n$.
The degenerate case $\sigma_n=0$ is, while unproblematic, a bit technically awkward, hence our temporary assumption to the contrary. (Thus, the results of this section will take a small modification to apply to the massless Gaussian free field.) 
If $\bmsigma = \{(1+n)^\varsigma\}_{n=0}^\infty$  for some  $\varsigma \in \bbR$, $h_{\bmsigma}(\bbN)=h^{-\varsigma}(\bbN)$. 
If $\sigma_n$ grows at most polynomially in $n$, then $h_{\bmsigma}(\bbN)\subset d'(\bbN)$, and the map $\Sigma$ defined by \cref{eq:generic_sum} restricts to a continuous embedding 
\begin{equation}
\Sigma|_{h_{\bmsigma}(\bbN)} : h_{\bmsigma}(\bbN)\to \scrD'(M),
\label{eq:Sigma_restricted} 
\end{equation}
and as in the previous section we can replace `$\scrD'(M)$' on the right-hand side with some $\{\sigma_n\}_{n=0}^\infty$-dependent $L^2$-based Sobolev space.

We want to know for which $s\in \bbR$ and ``microlocal cutoffs'' $\operatorname{Op}(\chi)$ -- $\chi$ being an appropriate function on the cosphere bundle or symbol on the punctured cotangent bundle  and `$\operatorname{Op}(\chi)$' denoting an appropriate quantization thereof (see \S\ref{sec:main_theorem}) --  $ \operatorname{Op}(\chi)(1+\triangle_g)^s \Phi(\omega) \notin \scrL^2(M)$ holds for $\bbP$-almost all $\omega \in \Omega$. Our arguments in this section apply to general continuous linear operators $L:\scrD'(M)\to \scrD'(M)$ in place of $\operatorname{Op}(\chi)(1+\triangle_g)^s $, including all Kohn-Nirenberg $\Psi$DOs. 
The main result of this section is the following consequence (or perhaps variant) of Sazonov's theorem (together with its converse). (See \cite{SchwartzRadon} for similar statements. Our formulation differs somewhat from the formulation there, but this difference is minor. Hence we do not claim any novelty here, except in presentation.)

\begin{proposition}\label{Sazonov}
	Suppose that $L:\scrD'(M)\to \scrD'(M)$ is a continuous linear operator and that $\bmsigma=\{\sigma_n\}_{n=0}^\infty$ grows at most polynomially in $n$. 
	Then the random distribution \cref{eq:Phi} satisfies 
	\begin{itemize}
		\item[(I)]  $L \Phi(\omega) \in \scrL^2(M)$ for $\bbP$-almost all $\omega \in \Omega$   
	\end{itemize} 
	if and only if 
	\begin{itemize}
		\item[(II)]$L\circ \Sigma(h_{\bmsigma}(\bbN))\subseteq \scrL^2(M)$ and   
		\begin{equation}
		L\circ \Sigma|_{h_{\bmsigma}(\bbN)} : h_{\bmsigma}(\bbN)\to \scrL^2(M,g), 
		\label{eq:sigma_restricted} 
		\end{equation}   is Hilbert-Schmidt.
	\end{itemize}
Note that (II) is equivalent to the conjunction of having the inclusion $L \phi_n \in \scrL^2(M)$ for all $n\in \bbN$ and having the estimate $\sum_{n=0}^\infty \sigma_n^2 \lVert L \phi_n \rVert^2_{\scrL^2(M,g)}<\infty$.
\end{proposition}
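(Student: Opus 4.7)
The plan is to identify the law of $\Phi$ as the pushforward of the canonical Gaussian cylinder-set measure on $h_{\bmsigma}(\bbN)$ via the inclusion $\Sigma$, and then to invoke Sazonov's theorem together with its converse as a black box. First I would check this identification at the level of characteristic functionals: for $\varphi \in \scrD(M)$, the independence of the $\gamma_n$ gives
\[
\bbE\bigl[\exp(i\operatorname{Re}\langle \varphi,\Phi\rangle_{\scrL^2})\bigr] = \prod_{n=0}^\infty \bbE\bigl[\exp(i\gamma_n \operatorname{Re}\langle \varphi,\phi_n\rangle_{\scrL^2})\bigr] = \exp\!\Bigl(-\tfrac{1}{2}\sum_{n=0}^\infty \sigma_n^2 (\operatorname{Re}\langle \varphi,\phi_n\rangle_{\scrL^2})^2\Bigr),
\]
and one recognizes the right-hand side as the characteristic functional of $\Sigma_* \mu$, where $\mu$ denotes the canonical Gaussian cylinder-set measure on $h_{\bmsigma}(\bbN)$ (equipped with the orthonormal basis $\{\sigma_n e_n\}_{n=0}^\infty$, $e_n$ the $n$th standard unit vector). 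By continuity of $L$ on $\scrD'(M)$, the law of $L\Phi$ is then the cylinder-measure pushforward $(L\circ\Sigma)_* \mu$ on $\scrD'(M)$.

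The equivalence then reduces to the following classical dichotomy: for a bounded linear $T: h_{\bmsigma}(\bbN)\to \scrL^2(M,g)$ between separable Hilbert spaces, the pushforward $T_*\mu$ extends to a $\sigma$-additive Radon measure on $\scrL^2(M,g)$ if and only if $T$ is Hilbert-Schmidt. This is Sazonov's theorem \cite[Part II-Chapter III, Theorem 2]{SchwartzRadon} in the direction $(\Leftarrow)$ and its converse \cite[Part II-Chapter IV, Theorem 3]{SchwartzRadon} in the direction $(\Rightarrow)$. For (II) $\Rightarrow$ (I), applying Sazonov to $T = L\circ \Sigma|_{h_{\bmsigma}(\bbN)}$ produces a Gaussian Radon measure on $\scrL^2(M,g)$ whose characteristic functional matches that of $L\Phi$; uniqueness of Fourier transforms of Radon measures on $\scrD'(M)$ (a Suslin space, as noted in the paper) then forces this measure to coincide with the law of $L\Phi$, so $L\Phi \in \scrL^2(M,g)$ almost surely. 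For (I) $\Rightarrow$ (II), \Cref{cor:version} yields an $\scrL^2(M,g)$-valued version of $L\Phi$ whose law is a Gaussian Radon measure on $\scrL^2(M,g)$, and the converse to Sazonov forces $T$ to be Hilbert-Schmidt.

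A small preliminary step in (I) $\Rightarrow$ (II) is to verify that $T$ is actually everywhere defined and $\scrL^2(M,g)$-valued. This follows by splitting $\Phi = \gamma_n \phi_n + R_n$ into two independent pieces for each $n$ and using $\bbP(\gamma_n \neq 0)>0$: the a.s.\ inclusion $L\Phi \in \scrL^2$ forces $L\phi_n \in \scrL^2(M,g)$ individually for each $n$, so $T$ is defined on finitely supported sequences and extends to all of $h_{\bmsigma}(\bbN)$ by a closed-graph argument. The concluding remark is then immediate: computing $\lVert T\rVert_{\mathrm{HS}}^2$ in the orthonormal basis $\{\sigma_n e_n\}_{n=0}^\infty$ gives $T(\sigma_n e_n) = \sigma_n L\phi_n$ and hence
\[
\lVert T\rVert_{\mathrm{HS}}^2 = \sum_{n=0}^\infty \sigma_n^2 \lVert L\phi_n\rVert_{\scrL^2(M,g)}^2,
\]
which is the stated concrete reformulation of (II). The main obstacle is measure-theoretic bookkeeping across the three levels $h_{\bmsigma}(\bbN)\subset \scrL^2(M,g)\subset \scrD'(M)$ and the attendant cylinder-versus-Radon distinction; once Sazonov's theorem is accepted the rest is essentially formal, which is the entire point of reducing to it.
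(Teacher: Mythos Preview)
Your proposal is correct but organized differently from the paper.  The paper does not invoke the cylinder-measure formulation of Sazonov's theorem directly; instead it reduces \Cref{Sazonov} to an auxiliary result (\Cref{prop:Sazonov_equiv}) giving the equivalence of seven convergence modes for a Gaussian series $\sum_n \tilde{\gamma}_n x_n$ in a separable Hilbert space, proves that equivalence by several independent routes (Fernique, Paley--Zygmund plus Kahane--Khintchine, and Sazonov proper), and then reads off the proposition by writing $L\Phi_N = \sum_n \tilde{\gamma}_n(\sigma_n L\phi_n)$ and applying the equivalence (1)$\Leftrightarrow$(2) with the dense-set-of-functionals refinement (the dense set being $\scrD(M)\subset \scrL^2(M,g)$).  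Your route bypasses the random-series layer and goes straight through characteristic functionals and the cylinder-measure black box, which is shorter but hides the probabilistic content the paper chooses to make explicit.  Two small remarks: first, your ``closed-graph argument'' to extend $T$ is not needed---once $L\phi_n\in\scrL^2$ for every $n$ and the covariance operator of the Gaussian law of $L\Phi$ is trace class, the estimate $\sum_n\sigma_n^2\lVert L\phi_n\rVert^2<\infty$ follows directly, and that \emph{is} boundedness of $T$; second, your splitting $\Phi=\gamma_n\phi_n+R_n$ is exactly the paper's \Cref{lem:auxiliary}, which it proves carefully via a sign-flip (rerandomization) argument together with a purely measure-theoretic lemma guaranteeing two realizations differing in only the $n$th coordinate.
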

Recall that a linear mapping $L:\scrX\to \scrY$ of separable infinite dimensional Hilbert spaces $\scrX,\scrY$ is called \emph{Hilbert-Schmidt} if it satisfies 
\begin{equation}
\lVert L \rVert_{\mathrm{HS}}^2 \overset{\mathrm{def}}{=} \sum_{n=0}^\infty \lVert L x_n \rVert_{\scrY}^2= \operatorname{Tr} L^*L  < \infty
\label{eq:HS_condition} 
\end{equation}
for some orthonormal basis $\{x_n\}_{n=0}^\infty$ of $\scrX$, in which case $L$ satisfies \cref{eq:HS_condition} \emph{for all} orthonormal bases $\{x_n\}_{n=0}^\infty$ of $\scrX$. A Hilbert-Schmidt mapping is necessarily bounded and moreover compact. Given an orthonormal basis $\{x_n\}_{n=0}^\infty\subseteq \scrX$ and arbitrary $\{y_n\}_{n=0}^\infty \subseteq \scrY$, if a mapping $\{x_n\}_{n=0}^\infty \to \scrY$, $x_n\mapsto y_n$, extends to a 
bounded linear operator $\scrX\to \scrY$, the resultant extension is unique and is Hilbert-Schmidt if and only if 
\begin{equation} 
\sum_{n=0}^\infty \lVert y_n \rVert^2_\scrY < \infty.
\label{eq:m5}
\end{equation} 
The estimate \cref{eq:m5} implies the extension to a bounded linear operator $L:\scrX\to \scrY$ satisfying $\lVert L \rVert_{\mathrm{op}} \leq \lVert L \rVert_{\mathrm{HS}} = \smash{(\sum_{n=0}^\infty \lVert y_n \rVert_{\scrY}^2 )^{1/2}}$, and so \cref{eq:m5} is a necessary and sufficient condition for the extension of the given mapping $\{x_n\}_{n=0}^\infty \to \scrY$ to a Hilbert-Schmidt operator. 
This proves the equivalence of the formulation of (II) given at the end of \Cref{Sazonov} with (II) itself. Note that the Hilbert-Schmidt operators constitute an operator ideal (see e.g.\ \cite[Theorem 3.8.2]{Simon2015operator}). We refer the reader to \cite[Chapter 3, \S8]{Simon2015operator} for an exposition of the theory of Hilbert-Schmidt operators.

The whole proposition follows in short order from the following conjunction of mostly well-known results. Suppose that $\scrX$ is a separable $\bbC$-Hilbert space, $\{x_n\}_{n=0}^\infty \subseteq \scrX$, and $(\tilde{\Omega},\tilde{\calF},\tilde{\bbP})$ is a probability space on which i.i.d. standard Gaussian random variables $\tilde{\gamma}_0,\tilde{\gamma}_1,\tilde{\gamma}_2,\ldots:\tilde{\Omega}\to \bbR$
are defined.    
In the following, given (not necessarily orthonormal) $x_0,x_1,x_2,\ldots \in \scrX$, we set  
\begin{equation} 
\Sigma_N = \Sigma_N(-)=\sum_{n=0}^N \tilde{\gamma}_n(-) x_n. 
\label{eq:sn}
\end{equation}
Each of these is an $\scrX$-valued random variable, in the sense of \S\ref{subsec:random}.   Recall that the sequence $\{\Sigma_N\}_{N=0}^\infty$ is said to converge (strongly) \emph{in probability} to another $\scrX$-valued random variable $\Sigma$ if 
\begin{equation} 
\lim_{N\to\infty}\tilde{\bbP}[ \lVert \Sigma_N - \Sigma \rVert_\scrX>\delta]\to 0 
\end{equation} 
for each $\delta>0$, i.e.\ if the $\bbR$-valued random variable $\lVert \Sigma_N - \Sigma \rVert_{\scrX}$ converges to zero in probability.
Analogously, we say that $\Sigma_N$ converges \emph{weakly in probability} to $\Sigma$ if 
$\langle \Sigma_N , x^* \rangle_\scrX \to \langle \Sigma,x^* \rangle_\scrX$ as $N\to\infty$ in probability for each $x^* \in \scrX$.  (See \cite{Billingsley} for the $\bbR$-valued case, \cite{SchwartzRadon}\cite{Hytonen2016analysis} for the generalization to Banach spaces.) In addition, let $L^p(\tilde{\Omega},\tilde{\calF},\tilde{\bbP};\scrX)$ denote for each $p\in [1,\infty)$ the Banach space of $p$-integrable $X$-valued random variables on the given probability space --- see \cite{Hytonen2016analysis}. 
\begin{proposition} \label{prop:Sazonov_equiv}
Let $\scrX$ be a separable Hilbert space. 
Given the setup above, with $\Sigma_N$ defined by \cref{eq:sn}, the following seven conditions are equivalent.
	\begin{enumerate}
		\item $\sum_{n=0}^\infty \lVert x_n \rVert_\scrX^2 < \infty$, 
		\item $\sum_{n=0}^N \tilde{\gamma}_n(\tilde{\omega}) x_n$ converges \;weakly\; in $\scrX$ as $N\to\infty$ for $\tilde{\bbP}$-almost all $\tilde{\omega} \in \tilde{\Omega}$, 
		\label{item:convergence_i}
		\item $\sum_{n=0}^N \tilde{\gamma}_n(\tilde{\omega}) x_n$ converges strongly in $\scrX$ as $N\to\infty$ for $\tilde{\bbP}$-almost all $\tilde{\omega}\in \tilde{\Omega}$. 
		\label{item:convergence_ii}
		\item $\sum_{n=0}^N \tilde{\gamma}_n(-) x_n$ converges \;weakly\; in $\scrX$ as $N\to\infty$ in probability. 
		\item $\sum_{n=0}^N \tilde{\gamma}_n(-) x_n$ converges strongly in $\scrX$ as $N\to\infty$ in probability. 
		\item $\sum_{n=0}^N \tilde{\gamma}_n(-) x_n$ converges in $L^p(\tilde{\Omega},\tilde{\calF},\tilde{\bbP};\scrX)$ as $N\to\infty$ for some $\,p\in [1,\infty)$,  i.e.\ that 
		\begin{equation}
			\lim_{N\to\infty} \sup_{N' \geq N} \int_{\tilde{\Omega}} \Big\lVert \sum_{n=N}^{N'} \tilde{\gamma}_n(\tilde{\omega}) x_n \Big\rVert_{\scrX}^p \dd \tilde{\bbP} (\tilde{\omega}) =0 
			\label{eq:convinlpdef}
		\end{equation}
		for some $p\in [1,\infty)$. 
		\label{item:convergence_iii}
		\item $\sum_{n=0}^N \tilde{\gamma}_n(-) x_n$ converges in $L^p(\tilde{\Omega},\tilde{\calF},\tilde{\bbP};\scrX)$ as $N\to\infty$ for every $p\in [1,\infty)$, i.e.\ that \cref{eq:convinlpdef} holds for all $p\in [1,\infty)$.	
		\label{item:convergence_iv}
	\end{enumerate}
Moreover, it suffices to verify (2), (4) on a dense set of functionals. This means (by Riesz duality) that the following condition is equivalent to all of those above: there exists a total subset $S\subseteq \scrX$ and an $\scrX$-valued random variable $\Sigma:\smash{\tilde{\Omega}}\to \scrX$ such that \begin{equation} 
\langle \Sigma_N, x^* \rangle_\scrX\to \langle \Sigma,x^* \rangle_\scrX
\end{equation} 
in probability whenever $x^* \in S$.

If the equivalent conditions above hold, then the well-defined law $\Sigma_* \tilde{\bbP}:\operatorname{Borel}(\scrX)\to [0,1]$
of the almost surely existing strong limit 
\begin{equation}
\Sigma=\lim_{N\to\infty} \sum_{n=0}^N \tilde{\gamma}_n x_n
\end{equation} 
(which makes sense as an $\scrX$-valued random variable) is a centered Gaussian.  

\end{proposition}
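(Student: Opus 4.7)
The strategy is to close the cycle $(1) \Rightarrow (7) \Rightarrow (6) \Rightarrow (5) \Rightarrow (4) \Rightarrow (1)$, supplemented with $(1) \Rightarrow (3) \Rightarrow (5)$ and $(3) \Rightarrow (2) \Rightarrow (4)$. The two substantive implications are $(1) \Rightarrow (7)$ and $(4) \Rightarrow (1)$; the rest are routine or follow from It\^o-Nisio and Doob's maximal inequality. For the first, expanding the Hilbertian norm in an orthonormal basis $\{e_k\}$ of $\scrX$ and using $\bbE[\tilde\gamma_m\tilde\gamma_n]=\delta_{mn}$ yields
$$\bbE\Big[\Big\lVert\sum_{n=N+1}^{N'}\tilde\gamma_n x_n\Big\rVert^2_\scrX\Big] = \sum_{n=N+1}^{N'}\lVert x_n\rVert^2_\scrX,$$
so that under (1) the sequence $\{\Sigma_N\}$ is Cauchy in $L^2(\tilde\Omega;\scrX)$ and converges in $L^2$ to some $\Sigma$. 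Because each tail $\Sigma_{N'}-\Sigma_N$ is itself an $\scrX$-valued centered Gaussian, Fernique's theorem (equivalently Kahane-Khintchine for vector-valued Gaussians) makes all of its $L^p$-norms equivalent to its $L^2$-norm, promoting the $L^2$-convergence to $L^p$-convergence for every $p\in [1,\infty)$.

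The chain $(7)\Rightarrow(6)\Rightarrow(5)\Rightarrow(4)$ is immediate (Markov for the middle step), as are $(3)\Rightarrow(5)$, $(3)\Rightarrow(2)$, and $(2)\Rightarrow(4)$. For $(1)\Rightarrow(3)$, I would note that $\{\Sigma_N\}$ is a square-integrable $\scrX$-valued martingale relative to $\tilde\calF_N = \sigma(\tilde\gamma_0,\ldots,\tilde\gamma_N)$, closed in $L^2$ by the $\Sigma$ produced above; Doob's $L^2$-maximal inequality then gives $\bbE[\sup_N \lVert\Sigma - \Sigma_N\rVert^2_\scrX] < \infty$, whence $\Sigma_N \to \Sigma$ almost surely in norm.

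The core step is $(4)\Rightarrow(1)$. First, It\^o-Nisio for sums of independent symmetric Banach-space-valued variables states that convergence in distribution on any total set of continuous linear functionals already forces almost-sure strong convergence; applied to the symmetric Gaussian summands $\tilde\gamma_n x_n$, it upgrades (4) (even tested only on the total set $S$ of the final clause) to $\Sigma_N \to \Sigma$ a.s.\ in norm. For any ONB $\{e_k\}$ of $\scrX$, the scalar series $\langle\Sigma_N,e_k\rangle_\scrX = \sum_{n\le N}\tilde\gamma_n\langle x_n,e_k\rangle_\scrX$ then converges a.s.\ to $\langle\Sigma,e_k\rangle_\scrX$, so Kolmogorov's three-series theorem yields $V_k := \sum_n|\langle x_n,e_k\rangle|^2 < \infty$ with the limit a centered real Gaussian of variance $V_k$; joint Gaussianity of finite marginals is preserved under the a.s.\ limit, exhibiting $\Sigma$ as a centered $\scrX$-valued Gaussian. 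Fernique's theorem gives $\bbE[\lVert\Sigma\rVert^2_\scrX]<\infty$, and Parseval combined with Tonelli closes the argument:
$$\infty > \bbE[\lVert\Sigma\rVert^2_\scrX] = \sum_k \bbE[|\langle\Sigma,e_k\rangle_\scrX|^2] = \sum_k \sum_n |\langle x_n,e_k\rangle_\scrX|^2 = \sum_n \lVert x_n\rVert^2_\scrX.$$
The final clause of the proposition---centered Gaussianity of the limit---is recorded along the way. The main obstacle I anticipate is the appeal to Fernique: without global integrability of $\lVert\Sigma\rVert^2_\scrX$, the Parseval chain has no left-hand side, so translating one-dimensional Gaussianity of marginals into square-summability of the coefficient sequence genuinely requires this nontrivial concentration input about infinite-dimensional Gaussians.
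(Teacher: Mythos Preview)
Your argument is correct and overlaps substantially with the paper's ``first proof'' in \S3.1: the orthogonality identity for (1)$\Leftrightarrow$ $L^2$-convergence, Kahane--Khintchine for (6)$\Leftrightarrow$(7), and the Fernique $+$ Parseval/Tonelli chain for (4)$\Rightarrow$(1) are exactly what the paper does (Lemmas~3.7, 3.8, 3.10). Two points of genuine difference are worth noting.

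First, for (1)$\Rightarrow$(3) you invoke the vector-valued martingale convergence theorem (or Doob's maximal inequality plus a Borel--Cantelli patch). The paper does \emph{not} use this route: it obtains (3) either from It\^o--Nisio (after first proving (1)$\Rightarrow$(2) directly via the Kolmogorov two-series theorem and L\'evy's maximal inequality, Lemma~3.14), or via Schwartz's trick of factoring the Hilbert--Schmidt map through a compact operator (Lemma~3.15). Your martingale argument is cleaner and more direct in the Hilbert setting, though as written the line ``Doob gives $\bbE[\sup_N\lVert\Sigma-\Sigma_N\rVert^2]<\infty$, whence a.s.\ convergence'' needs a half-step more: finiteness of that expectation alone does not imply convergence; you need the martingale convergence theorem for Hilbert-valued martingales (valid since Hilbert spaces have the Radon--Nikodym property), or a subsequence Borel--Cantelli argument with Doob to control oscillations.

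Second, your (4)$\Rightarrow$(1) takes a slightly longer path than the paper's Lemma~3.10: you first invoke It\^o--Nisio to pass to a.s.\ convergence and then use the three-series theorem coordinatewise, whereas the paper observes directly that convergence in probability of each $\langle\Sigma_N,x^*\rangle$ already forces the limit $\langle\Sigma,x^*\rangle$ to be Gaussian (limits in law of Gaussians are Gaussian), after which Fernique and Parseval/Tonelli finish. Both routes are fine. One attribution note: the statement that It\^o--Nisio works when convergence is tested only against a \emph{total} set $S$ is the Hoffmann--J{\o}rgensen refinement (Theorem~3.12 in the paper), not the classical It\^o--Nisio theorem; the paper is careful about this distinction (cf.\ Lemma~3.11 and the surrounding discussion).
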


The final locution in the proposition just means  that $\smash{\langle \Sigma,x \rangle_\scrX: \tilde{\Omega}\to \bbR}$ is a (possibly degenerate) centered Gaussian random variable for each $x\in \scrX$.
Recall that a \emph{total} subset of a Banach space is a set whose algebraic span is dense. 

\begin{remark}
	A priori, the set of $\omega\in \Omega$ for which the formal series $\sum_{n=0}^\infty \tilde{\gamma}_n(\omega)x_n$ is weakly summable (resp.\ strongly summable) is in $\sigma(\tilde{\gamma}_0,\tilde{\gamma}_1,\tilde{\gamma}_2,\ldots)\subseteq \calF$. 
	\begin{itemize}
		\item 	This is clear in the case of strong summability, since the differences $\lVert \Sigma_N-\Sigma_M \rVert_\scrX^2$ are all measurable with respect to the given $\sigma$-algebra, and $\Sigma_N(\omega)$ converges strongly as $N\to\infty$ if and only if the sequence $\{\Sigma_N(\omega)\}_{N=0}^\infty$ is Cauchy.
		\item 	In the case of weak summability, note that weak summability implies strong boundedness (by the uniform boundedness principle --- cf.\ \cite[Exercise 3.16]{RS}) of partial sums, and for bounded sequences weak convergence may be verified by testing against a countable operator norm dense subset $S \subset \scrX^*$ of functionals. Let $\scrX_0$ denote a countable dense subset of $\scrX$, and consider the subset $F\subset \Omega$ of $\omega\in \Omega$ such that $\sup_{N} \lVert \Sigma_N(\omega) \rVert_\scrX < \infty$ and that  for each rational $\varepsilon>0$ there exists some $\Sigma_{\mathrm{approx}} \in \scrX_0$ such that for all $\Lambda \in S \backslash \{0\}$, there exists some $M=M(\Lambda,\Sigma_{\mathrm{approx}},\omega)$ such that 
		\begin{equation} 
		|\langle \Lambda, \Sigma_N-\Sigma_{\mathrm{approx}} \rangle| < \lVert \Lambda \rVert_{\mathrm{op}} \varepsilon
		\end{equation}
		for all $N\geq M$. Then,
		$F \in \sigma(\tilde{\gamma}_0,\tilde{\gamma}_1,\tilde{\gamma}_2,\ldots)$, and  $\Sigma_N(\omega)$ converges weakly as $N\to\infty$ if and only if $\omega\in F$.
	\end{itemize}		
	In fact, this set is in the tail $\sigma$-algebra $\cap_{N>0} \sigma(\tilde{\gamma}_N,\tilde{\gamma}_{N+1},\tilde{\gamma}_{N+2},\ldots)$, so by the Kolmogorov zero-one law either $\Sigma_N$ converges weakly (resp.\ strongly) almost surely or it fails to do so almost surely. 
\end{remark}

The fact that it suffices to verify (4) on a dense set of functionals can be seen from a standard proof of (4) $\Rightarrow$ (1), which can be modified to use only a dense set of functionals (as essentially noted in \cite{Hoffmann1974}). We will use the continuity of the Fourier transform of an $\scrX$-valued random variable with respect to the strong topology  \cite[Volume II pg. 122, alternatively 7.3.16]{Bogachev}\cite[Part II- Chapter 2, Theorem 1]{SchwartzRadon} in order to make the needed inference. Cf.\ \Cref{Ito-Nisio}, \Cref{density_lemma}.

We only need the equivalence of (1) and (2) -- with (2) weakened as noted above -- to prove \Cref{Sazonov}, but the equivalence of the other conditions is interesting in its own right and useful in proving the needed result, so we take the time to discuss it here. See \cite[Theorem 6.4.1, Corollary 6.4.4]{Hytonen2016analysis} for a textbook presentation of some of this.

The upshot of this section will be three independent proofs of the important implication (2) $\Rightarrow$ (1), where (2) is to be verified on dense set of functionals: 
one in terms of Sazonov's theorem and its converse (which gave this section its title), one in terms of Fernique's theorem \cite{Fernique}, and one in terms of the It\^o-Nisio theorem \cite{ItoNisio} in conjunction with the Paley-Zygmund inequality and Kahane-Khintchine inequalities. Somewhat arbitrarily, we split the proofs among two subsections, \S\ref{subsec:3.f} and \S\ref{subsec:3.st}.

We now turn to showing that \Cref{prop:Sazonov_equiv} implies \Cref{Sazonov}. 
First, we check (though it is not needed for the proof of the main theorem) that (I) can only hold if it is the case that $L \phi_n \in \scrL^2(M)$ for each $n\in \bbN$. 

\begin{lemma} \label{lem:auxiliary}
	If $L\Phi \in \scrL^2(M)$ with positive probability, meaning that the set of $\omega\in \Omega$ for which $L\Phi(\omega) \in \scrL^2(M)$ is in $\calF$ and satisfies $\bbP[L\Phi \in \scrL^2(M)]>0$, then $L\phi_n \in \scrL^2(M)$ for each $n\in \bbN$. 
\end{lemma}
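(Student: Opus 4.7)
The plan is a Fubini-type slicing argument which exploits the fact that each $\gamma_n$ is independent of the remaining noise. Fix $n\in \bbN$; I want to conclude $L\phi_n \in \scrL^2(M)$.

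First, decompose $\Phi=\gamma_n \phi_n + \Phi^{(n)}$, where $\Phi^{(n)}=\sum_{m\neq n}\gamma_m \phi_m$. By an argument identical to that in the proof of \Cref{global_regularity} (applied to the sequence with the $n$th term deleted), $\Phi^{(n)}:\Omega\to \scrD'(M)$ is a well-defined random distribution, and it is independent of $\gamma_n$ by the independence of the $\gamma_m$. Let $\mu$ denote the law of $\Phi^{(n)}$ on $\scrD'(M)$, so that the law of the pair $(\gamma_n,\Phi^{(n)})$ on $\bbR\times \scrD'(M)$ is the product $\mathrm{N}(0,\sigma_n)\otimes \mu$.

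Next, define $F:\bbR\times \scrD'(M)\to \scrD'(M)$ by $F(c,\psi) = L(c\phi_n+\psi)=cL\phi_n+L\psi$; this is continuous by continuity of $L$. Since the continuous inclusion $\scrL^2(M)\hookrightarrow\scrD'(M)$ makes $\scrL^2(M)$ a Borel subset of $\scrD'(M)$, the event $E=F^{-1}(\scrL^2(M))$ is Borel in $\bbR\times \scrD'(M)$. By hypothesis, the pair $(\gamma_n,\Phi^{(n)})$ lands in $E$ with positive probability, so $(\mathrm{N}(0,\sigma_n)\otimes \mu)(E)>0$. Fubini then produces a $\mu$-positive set of $\psi$'s for which the slice $E_\psi=\{c\in \bbR:F(c,\psi)\in \scrL^2(M)\}$ has positive $\mathrm{N}(0,\sigma_n)$-measure. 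Since $\sigma_n>0$, positive Gaussian measure implies positive Lebesgue measure, so at least one such slice contains two distinct points $c_1\neq c_2$.

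Finally, for such $\psi,c_1,c_2$, the difference
\begin{equation}
(c_1-c_2)L\phi_n = F(c_1,\psi)-F(c_2,\psi)
\end{equation}
lies in $\scrL^2(M)$ since each summand on the right does, which forces $L\phi_n\in \scrL^2(M)$. There is no real obstacle here; the only things to verify are the joint measurability of $E$ (handled by continuity of $F$ and the Borelness of $\scrL^2(M)\subseteq \scrD'(M)$) and the fact that $\Phi^{(n)}$ is a bona fide random distribution independent of $\gamma_n$. Note that the argument is very robust: it uses neither Gaussianity beyond the positivity of $\sigma_n$ (any coefficient whose law is absolutely continuous with respect to Lebesgue measure would suffice) nor any structure of $L$ beyond continuity on $\scrD'(M)$.
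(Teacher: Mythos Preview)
Your argument is correct and is essentially the same as the paper's: the paper isolates the Fubini slicing step as a separate measure-theoretic lemma (\Cref{measure_lem}), producing $\omega,\omega'$ in the good event with $\gamma_m(\omega)=\gamma_m(\omega')$ for $m\neq n$ but $\gamma_n(\omega)\neq\gamma_n(\omega')$, and then takes the difference exactly as you do. The only cosmetic difference is that the paper works on an abstract product decomposition of $(\Omega,\sigma(\gamma_0,\gamma_1,\ldots),\bbP)$ rather than pushing forward to $\bbR\times\scrD'(M)$, and it also sketches an alternative rerandomization (sign-flip / phase-rotation) route before giving the Fubini argument.
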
 
Note that if we do not already know the conclusion of the lemma, it is not a priori obvious that 
\begin{equation} 
(L\circ \Phi)^{-1}(\scrL^2(M))=\{\omega\in \Omega: L\Phi(\omega) \in \scrL^2(M)\} 
\end{equation} 
 is an event in the tail $\sigma$-algebra $\cap_{N>0} \sigma(\gamma_N,\gamma_{N+1},\gamma_{N+2},\ldots)$, and so we cannot directly apply the Kolmogorov zero-one law to deduce that 
\begin{equation}
\bbP[L\Phi \in \scrL^2(M)] > 0 \iff \bbP[L \Phi \in \scrL^2(M)] = 1.
\label{eq:0}
\end{equation}
If, however, we do know that $L\phi_n \in \scrL^2(M)$ for each $n\in \bbN$, then the zero-one law applies and \cref{eq:0} follows. Let's begin by proving \Cref{lem:auxiliary} under the additional assumption (I), i.e.\ that the right-hand side of \cref{eq:0} holds. 

\begin{proof}[Proof of \Cref{lem:auxiliary} if (I) holds]
	Fix $n\in \bbN$. 
	Note that the two random distributions
	\begin{equation}
	L \Phi = \sum_{m=0}^\infty \gamma_m L \phi_m  \quad \text{ and }\quad - \gamma_n L\phi_n +\sum_{m=0, m\neq n}^\infty \gamma_m L \phi_m 
	\end{equation}
	are equidistributed, so if condition (I) holds then both random distributions lie in $\scrL^2(M)$ almost surely. Given this, their difference 
	\begin{equation} 
	2\gamma_n L \phi_n  = L\Phi - \Big(  - \gamma_n L\phi_n +\sum_{m=0, m\neq n}^\infty \gamma_m L \phi_m   \Big) 
	\end{equation} 
	lies in $\scrL^2(M)$ almost surely. So, $L\phi_n \in \scrL^2(M)$.
\end{proof}
Using a union bound, the previous argument works as long as $\bbP[L\Phi \in \scrL^2(M)] >1/2$, in which case we conclude that $\gamma_n(\omega) L \phi_n \in \scrL^2(M)$ for some positive measure set of $\omega$ and therefore that $L \phi_n \in \scrL^2(M)$.
By promoting the $\gamma_n$ to complex Gaussians $\gamma_{n,\bbC} = \gamma_n + i \bar{\gamma}_n$ -- where $\gamma_0,\bar{\gamma}_0,\gamma_1,\bar{\gamma}_1,\ldots$ are mutually independent i.i.d.\ standard Gaussians -- and considering the equidistributed Gaussian random variables $\smash{e^{ 2\pi i m/ N}\gamma_{n,\bbC}}$ for $m\in \{1,\ldots,N\}$ in place of $\pm \gamma_n$, we can modify the previous argument to apply  as long as 
\begin{equation} 
\bbP[L\Phi \in \scrL^2(M)]> 1/N. 
\label{eq:1} 
\end{equation} 
If $L\Phi \in \scrL^2(M)$ with positive probability, then we can choose $N$ sufficiently large such that the hypothesis \cref{eq:1} holds and then conclude \Cref{lem:auxiliary} in full generality. Alternatively, we can use the following purely measure-theoretic fact:

\begin{lemma}
	If $F \in \sigma(\gamma_0,\gamma_1,\ldots)\subseteq \calF$ has positive measure, then for all $n\in \bbN$ there exist $\omega,\omega' \in F$ such that $\gamma_m(\omega) = \gamma_m(\omega')\iff m\neq n$. 
	\label{measure_lem}
\end{lemma}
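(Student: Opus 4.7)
My plan is to push everything forward to the canonical probability space $(\bbR^\bbN, \operatorname{Borel}, \nu)$ via the measurable map $\iota = (\gamma_0, \gamma_1, \ldots) : \Omega \to \bbR^\bbN$, where $\nu = \iota_* \bbP = \bigotimes_{m \in \bbN} \mu_m$ is the product of the Gaussian laws (this being a product by independence). Since $F \in \sigma(\iota)$, I can write $F = \iota^{-1}(B)$ for some Borel $B \subseteq \bbR^\bbN$ with $\nu(B) = \bbP(F) > 0$. Writing points of $\bbR^\bbN$ as $(y, t) \in \bbR^{\bbN \setminus \{n\}} \times \bbR$, setting $B_y := \{t \in \bbR : (y, t) \in B\}$, and decomposing $\nu = \nu_{\neq n} \otimes \mu_n$ with $\nu_{\neq n} := \bigotimes_{m \neq n} \mu_m$, the conclusion of the lemma becomes the search for two points $(y, t_1), (y, t_2) \in B$ with $t_1 \neq t_2$.

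The core of the argument will be Fubini combined with the non-atomicity of $\mu_n$ (guaranteed by the standing assumption $\sigma_n > 0$). Specifically, Fubini gives
\[
\nu(B) = \int_{\bbR^{\bbN \setminus \{n\}}} \mu_n(B_y) \, \dd \nu_{\neq n}(y) > 0,
\]
so $A := \{y : \mu_n(B_y) > 0\}$ has positive $\nu_{\neq n}$-mass, and in particular is nonempty. For any $y \in A$, the set $B_y \subseteq \bbR$ has positive $\mu_n$-measure, hence (by non-atomicity of $\mu_n$) is uncountable. Picking any such $y$ and any two distinct $t_1, t_2 \in B_y$ produces the needed pair of points in $B$, and this will finish the proof in the canonical setting.

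The hard part will be translating the conclusion back to $(\Omega, \calF, \bbP)$: the lemma demands genuine elements $\omega, \omega' \in F \subseteq \Omega$, but $\iota$ need not be surjective, so two points of $B$ agreeing off coordinate $n$ need not correspond to elements of $\Omega$. I plan to handle this by either (i) reducing to the canonical model at the outset—legitimate here because the lemma is used downstream only to derive the deterministic statement $L\phi_n \in \scrL^2(M)$, whose truth depends solely on the joint law of the $(\gamma_m)_m$—or (ii) working intrinsically via a disintegration argument, noting that by independence the regular conditional distribution of $\gamma_n$ given $Y := (\gamma_m)_{m \neq n}$ and restricted to $F$ is, for $\nu_{\neq n}$-a.e.\ $y \in A$, the non-degenerate probability $\mu_n(\cdot \cap B_y)/\mu_n(B_y)$, which forces the fiber of $F$ over $y$ to support two distinct $\gamma_n$-values.
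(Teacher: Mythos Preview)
Your approach is essentially identical to the paper's: both factor the measure via independence, apply Fubini to locate a slice of positive $\mu_n$-measure, and then invoke non-atomicity of the nondegenerate Gaussian law to extract two distinct values in that slice. The paper handles your ``hard part'' by asserting an isomorphism $\iota_n$ from a product space \emph{into} $(\Omega, \sigma(\gamma_0, \gamma_1, \ldots), \bbP)$ (so the two product-space points land in $F$ automatically), which is exactly your option (i) in different clothing; your explicit flagging of this subtlety is, if anything, more scrupulous than the paper.
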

\begin{proof} 
	Fix an $n\in \bbN$. 
	By the independence of the $\sigma$-algebras $\sigma(\gamma_n)$ and $\sigma(\gamma_m : m\neq n)$, we can 
	construct probability spaces $(\bbR,\operatorname{Borel}(\bbR),\bbP_n)$ and $(\Omega_n',\sigma(\tilde{\gamma}_{m,n} : m\neq n),\bbP_n')$ and an isomorphism 
	\begin{equation}
	\iota_n : (\bbR,\operatorname{Borel}(\bbR),\bbP_n) \times (\Omega_n',\sigma(\tilde{\gamma}_{m,n} : m\neq n),\bbP_n') \to (\Omega,\sigma(\gamma_0,\gamma_1,\ldots),\bbP)
	\label{eq:in}
	\end{equation}
	such that $\gamma_m \circ \iota_n  = \tilde{\gamma}_{m,n} \circ \pi^{(i)}$ for all $m\in \bbN$, where $i=1$ if $m=n$ and $i=2$ if $m\neq n$, where $\pi^{(1)},\pi^{(2)}$ are the canonical projections from the domain of \cref{eq:in} to the first or second factor respectively. Here $\tilde{\gamma}_{n,n}: \bbR\to \bbR$ is the identity, $\bbP_n$ is the law of $\gamma_n$ (i.e.\ a nondegenerate Gaussian), and $\tilde{\gamma}_{m,n}:\Omega_n'\to \bbR$ for $m\neq n$ are other random variables. By Fubini's theorem, we can write 
	\begin{equation}
	0 < \bbP[ \tilde{F} ] =  \int_{\Omega_n'}\Big(\int_{\bbR} 1_{\tilde{F}} \dd \bbP_n(\tilde{\gamma}_{n,n}) \Big) \dd \bbP_n'(\tilde{\gamma}_{m,n} : m\neq n) 
	\label{eq:m3}
	\end{equation}
	with $\tilde{F} = \iota_n^{-1}(F)\subseteq \bbR\times \Omega_n'$, where the right-hand side is a well-defined iterated integral.
	Since $\bbP_n$ is not supported on a single point (by the nondegeneracy assumption), \cref{eq:m3} implies that there exists some $w' \in \Omega_n'$ and distinct $r,\rho \in \bbR$ such that $(r,w'),(\rho,w') \in \tilde{F}$.  
	Setting $\omega = \iota_n(r,w')$ and $\omega' = \iota_n(\rho,w')$, $\omega,\omega' \in F$, and we see that 
	\begin{equation}
	\gamma_m(\omega) = \gamma_m\circ \iota_n(r,w') = \tilde{\gamma}_{m,n} \circ \pi^{(i)}(r,w') = 
	\begin{cases}
	r & (m=n), \\
	\tilde{\gamma}_{m,n}(w') & (\text{otherwise}), 
	\end{cases}
	\end{equation}
	where $i$ is as above, 
	and likewise for $\omega'$, with $\rho$ in place of $r$,  
	\begin{equation}
	\gamma_m(\omega') = \gamma_m\circ \iota_n(\rho,w') = \tilde{\gamma}_{m,n} \circ \pi^{(i)}(\rho,w') = 
	\begin{cases}
	\rho & (m=n), \\
	\tilde{\gamma}_{m,n}(w') & (\text{otherwise}), 
	\end{cases}
	\end{equation} 
	Therefore $\omega,\omega'$ have the desired properties. 
\end{proof}

\begin{proof}[Proof of \Cref{lem:auxiliary}]
	If (I) holds, let $F\in \calF$ be the positive measure set of all $\omega\in \Omega$ for which $L \Phi (\omega) \in \scrL^2(M)$. By \Cref{measure_lem}, for each $n\in \bbN$ there exists some $\omega,\omega' \in F$ such that 
	\begin{equation}
	\gamma L\phi_n + \sum_{m\neq n} \gamma_m(\omega) L \phi_m  \in \scrL^2(M,g) 
	\label{eq:03}
	\end{equation}
	for each $\gamma \in \{\gamma_n(\omega),\gamma_n(\omega')\}$ and $\gamma_n(\omega)\neq \gamma_n(\omega')$. Taking the difference of the series defined in \cref{eq:03} for both possible values of $\gamma$, we get $(\gamma_n(\omega)-\gamma_n(\omega')) L \phi_n \in \scrL^2(M,g)$, and this implies that $L \phi_n \in \scrL^2(M,g)$. 
\end{proof}

\begin{proof}[Proof of \ref{prop:Sazonov_equiv} $\Rightarrow$ \ref{Sazonov}]
	Given the setup of \Cref{Sazonov}, let $\tilde{\gamma}_n = \sigma_n^{-1}\gamma_n $. Then $\tilde{\gamma}_0,\tilde{\gamma}_1,\tilde{\gamma}_2,\ldots$ are i.i.d.\ standard Gaussian random variables, and we have 
	\begin{equation}
	\sum_{n=0}^N \tilde{\gamma}_n(\omega) L \circ \Sigma|_{h_{\bmsigma}} (\sigma_n \bmdelta_n) =\sum_{n=0}^N \tilde{\gamma}_n(\omega) \sigma_n L \phi_n = \sum_{n=0}^N \gamma_n(\omega) L \phi_n = L \Phi_N(\omega), 
	\label{eq:13}
	\end{equation}
	where $\{\sigma_n \bmdelta_n\}_{n=0}^\infty$ is the standard orthonormal basis for $h_{\bmsigma}(\bbN)$. 
	
	\begin{itemize}
		\item (II)$\Rightarrow$(I): 
		If $L\circ \Sigma|_{h_{\bmsigma}} : h_{\bmsigma}(\bbN)\to \scrL^2(M,g)$ is Hilbert-Schmidt, then \Cref{prop:Sazonov_equiv} applies, and we conclude using \cref{eq:13} that $L\Phi_N(\omega)$ converges ($\scrL^2(M)$-)weakly to some $\Phi_{L,\infty}(\omega) \in \scrL^2(M)$ for $\bbP$-almost all $\omega\in\Omega$.
		For such $\omega$, 
		\begin{equation} 
		\;L\Phi_N(\omega)\to \Phi_{L,\infty}(\omega)\text{ in $\scrD'(M)$}
		\label{eq:t5}
		\end{equation} 
		(since $\scrL^2(M)\hookrightarrow \scrD'(M)$ is continuous). Since by definition $\Phi_N\to \Phi$ in $\scrD'(M)$ (and by assumption $L:\scrD'(M)\to \scrD'(M)$ is continuous), $\Phi_\infty = \Phi$ satisfies 
		\begin{equation} 
		L\Phi_N(\omega)\to L \Phi_\infty(\omega)\;\text{ in $\scrD'(M)$}.
		\label{eq:t6}
		\end{equation} 
		Recall that we modified our probability space so that $\Phi_N(\omega)\to \Phi(\omega)$ in $\scrD'(M)$ for all $\omega\in \Omega$. Since $\scrD'(M)$ is a Hausdorff topological space, limits in it are unique, so the conjunction of \cref{eq:t5} and \cref{eq:t6} implies $\Phi_{L,\infty}(\omega) = L\Phi(\omega)$.
		We conclude that $L\Phi(\omega) \in \scrL^2(M)$ for $\bbP$-almost all $\omega\in \Omega$, as desired. 
		\item (I)$\Rightarrow$(II): we already know that if (I) holds, by \Cref{lem:auxiliary} $L\phi_n \in \scrL^2(M)$ for all $n\in \bbN$, so $L\Phi_N \in \scrL^2(M)$ as well. 
		By the continuity of $L$, $L \Phi_N (\omega) \to L \Phi(\omega)$ in $\scrD'(M)$. 
		In other words, 
		\begin{equation}
		\langle \varphi, L \Phi_N (\omega) \rangle_{\scrL^2(M,g)} \to \langle \varphi, L \Phi(\omega) \rangle_{\scrL^2(M,g)} , 
		\label{eq:r93}
		\end{equation}
		for  all $\varphi \in \scrD(M)$.   
		If (I) holds, then we can consider $L\Phi$ as an $\scrL^2(M,g)$-valued random variable. 
		\Cref{eq:r93} tells us that $L\Phi_N \to L\Phi$ in the weak topology generated by the elements of $\scrD(M)$ acting as linear functionals on $\scrL^2(M,g)$. These functionals are dense in $\scrL^2(M,g)$, so \Cref{prop:Sazonov_equiv} applied to \cref{eq:13} implies 
		\begin{equation}
		\sum_{n=0}^\infty \sigma_n^2 \lVert L \phi_n \rVert_{\scrL^2(M,g)}^2 < \infty .
		\end{equation}
		Therefore $L \circ \Sigma|_{h_{\bmsigma}}(h_{\bmsigma}(\bbN)) \subseteq \scrL^2(M)$, and the map \cref{eq:sigma_restricted} is Hilbert-Schmidt. 
	\end{itemize}
This completes the proof that \Cref{prop:Sazonov_equiv} implies \Cref{Sazonov}. 
\end{proof}

\subsection{First Proof of \Cref{prop:Sazonov_equiv}}
\label{subsec:3.f}
We now turn to the proof of \Cref{prop:Sazonov_equiv}.
In the following, all instances of `(1)', `(2)', `(3)', etcetera refer to the conditions in the proposition. 
The implications (3) $\Rightarrow$ (2), (3) $\Rightarrow$ (5), (5) $\Rightarrow$ (4), (7) $\Rightarrow$ (6), and (6) $\Rightarrow$ (4), (5)  and those that follow from these are all obvious. We only mention the other implications below. 
We will go back and forth between assuming that $\scrX$ is a general separable Banach space and that $\scrX$ is a separable Hilbert(izable) space. If it isn't specified otherwise, then $\scrX$ is assumed to be a Hilbert space, and with regards to the rest of the paper it suffices to assume as such.

\begin{lemma}
	\label{lem:6->1}
	(7) $\Rightarrow$ (1).
\end{lemma}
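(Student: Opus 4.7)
The plan is to specialize hypothesis (7) to $p=2$ and then exploit the Hilbert-space structure together with the fact that the $\tilde\gamma_n$ are centered, variance-one, and \emph{uncorrelated} (being independent). Crucially, the Hilbert-space inner product linearizes the squared norm of a finite sum, so the cross terms of $\lVert \Sigma_{N'}-\Sigma_N \rVert_\scrX^2$ vanish in expectation, leaving only the diagonal.

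Concretely, fix $N<N'$ and compute, inside $L^2(\tilde\Omega,\tilde\calF,\tilde\bbP;\scrX)$,
\begin{equation}
\int_{\tilde\Omega} \lVert \Sigma_{N'}(\tilde\omega)-\Sigma_N(\tilde\omega) \rVert_\scrX^2 \dd \tilde\bbP(\tilde\omega)
= \sum_{n,m=N+1}^{N'} \bbE[\tilde\gamma_n \tilde\gamma_m] \langle x_n,x_m \rangle_\scrX
= \sum_{n=N+1}^{N'} \lVert x_n \rVert_\scrX^2,
\end{equation}
where the second equality uses independence ($\bbE[\tilde\gamma_n\tilde\gamma_m]=0$ for $n\neq m$) and $\bbE[\tilde\gamma_n^2]=1$. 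Swapping the expectation with the (finite) double sum is justified by Fubini, since each $|\tilde\gamma_n\tilde\gamma_m \langle x_n,x_m \rangle_\scrX|$ is integrable by Cauchy--Schwarz and finiteness of Gaussian moments.

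Now invoke (7) with $p=2$: the hypothesis is that the partial sums $\{\Sigma_N\}_{N=0}^\infty$ are Cauchy in $L^2(\tilde\Omega,\tilde\calF,\tilde\bbP;\scrX)$. By the identity above, this is precisely the Cauchy condition
\begin{equation}
\lim_{N\to\infty}\sup_{N'\geq N}\sum_{n=N+1}^{N'}\lVert x_n \rVert_\scrX^2 = 0
\end{equation}
on the partial sums of the non-negative scalar series $\sum_{n=0}^\infty \lVert x_n \rVert_\scrX^2$. Such a series converges iff its partial sums are Cauchy, giving $\sum_{n=0}^\infty \lVert x_n \rVert_\scrX^2 <\infty$, which is (1).

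There is no real obstacle here; the whole point is that in a Hilbert space the $L^2$-isometry for independent Gaussian sums reduces the question to a scalar Cauchy criterion. The one thing worth flagging is that the argument genuinely uses the Hilbert-space assumption on $\scrX$ (to drop cross terms via orthogonality in $\scrX$, not just in $L^2(\tilde\bbP)$); the analogous Banach-space implication would require $\scrX$ to have type $2$ and is more delicate, but is not needed here.
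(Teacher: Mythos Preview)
Your proof is correct and follows essentially the same approach as the paper: both exploit the identity $\bbE\lVert \sum_{n\in I}\tilde\gamma_n x_n\rVert_\scrX^2 = \sum_{n\in I}\lVert x_n\rVert_\scrX^2$ coming from independence and the Hilbert-space structure, then specialize (7) to $p=2$. The paper states this identity for $\lVert\Sigma_N\rVert$ and concludes directly from boundedness of a monotone sequence, whereas you phrase it via the Cauchy increments $\lVert\Sigma_{N'}-\Sigma_N\rVert$; the difference is cosmetic.
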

\begin{proof} 
By the independence of the $\tilde{\gamma}_n$, we see that 
\begin{equation}
\Big\lVert \sum_{n=0}^N \tilde{\gamma}_n(-) x_n \Big\rVert_{L^2(\tilde{\Omega},\tilde{\calF},\tilde{\bbP};\scrX)}^2 = \sum_{n=0}^N \lVert x_n \rVert_\scrX^2
\label{eq:3h}
\end{equation}
for each $N\in \bbN$. If (7) holds, then the left-hand side of \cref{eq:3h} has to converge to something finite as $N\to\infty$, which just means that (1) holds. 
\end{proof}

\begin{lemma}
	(1) $\Rightarrow$ (6).
	\label{lem:1->5}
\end{lemma}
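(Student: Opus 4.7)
The plan is to prove convergence in $L^2(\tilde{\Omega},\tilde{\calF},\tilde{\bbP};\scrX)$, which is enough since (6) only requires convergence in $L^p$ for \emph{some} $p\in[1,\infty)$. The key observation is that $\scrX$ is a separable Hilbert space, so we can compute $L^2$-norms of $\scrX$-valued random series exactly by exploiting the inner product together with independence and mean-zero of the $\tilde{\gamma}_n$.

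More concretely, I would proceed as follows. For $N\geq M\geq 0$, I would expand
\begin{equation}
\Big\lVert \sum_{n=M+1}^{N} \tilde{\gamma}_n(-) x_n \Big\rVert_{L^2(\tilde{\Omega};\scrX)}^2 = \int_{\tilde{\Omega}} \sum_{n,m=M+1}^{N} \tilde{\gamma}_n(\tilde{\omega})\tilde{\gamma}_m(\tilde{\omega}) \langle x_n,x_m \rangle_{\scrX} \dd \tilde{\bbP}(\tilde{\omega}),
\end{equation}
and invoke $\bbE[\tilde{\gamma}_n \tilde{\gamma}_m] = \delta_{nm}$ (a consequence of independence and the standard Gaussian normalization) to collapse the double sum to
\begin{equation}
\Big\lVert \sum_{n=M+1}^{N} \tilde{\gamma}_n(-) x_n \Big\rVert_{L^2(\tilde{\Omega};\scrX)}^2 = \sum_{n=M+1}^N \lVert x_n \rVert_{\scrX}^2.
\end{equation}
This is precisely \cref{eq:3h} applied to a tail, and under hypothesis (1) the right-hand side tends to $0$ as $M,N\to\infty$. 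Consequently $\{\Sigma_N\}_{N=0}^\infty$ is Cauchy in $L^2(\tilde{\Omega},\tilde{\calF},\tilde{\bbP};\scrX)$, which is a Banach space (since $\scrX$ is separable, the usual Bochner theory applies), hence the sequence converges in $L^2(\tilde{\Omega},\tilde{\calF},\tilde{\bbP};\scrX)$. This verifies (6) with $p=2$.

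I do not expect a genuine obstacle here: once one recognizes that the Hilbert structure of $\scrX$ makes $L^2(\tilde{\Omega};\scrX)$ itself Hilbertian and turns the Gaussian-weighted series into a literal orthogonal series indexed by $n$, the result is a one-line Cauchy-sequence calculation. The only minor bookkeeping is to note that $L^2(\tilde{\Omega},\tilde{\calF},\tilde{\bbP};\scrX)$ is well-defined and complete for separable $\scrX$, and that the computation of cross-terms is justified by Fubini together with $\sum \lVert x_n \rVert_\scrX^2<\infty$. (One could instead prove (1) $\Rightarrow$ (7) directly using the Kahane--Khintchine inequalities to upgrade $L^2$-convergence to $L^p$-convergence for all $p\in[1,\infty)$, but for the implication (1) $\Rightarrow$ (6) as stated this is unnecessary.)
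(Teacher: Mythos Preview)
Your proof is correct and essentially identical to the paper's: both compute $\lVert \Sigma_N - \Sigma_M \rVert_{L^2(\tilde{\Omega};\scrX)}^2 = \sum_{n=M+1}^N \lVert x_n \rVert_\scrX^2$ using independence and the Hilbert structure of $\scrX$, deduce that $\{\Sigma_N\}$ is Cauchy in $L^2(\tilde{\Omega},\tilde{\calF},\tilde{\bbP};\scrX)$ under (1), and conclude (6) with $p=2$.
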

\begin{proof}
	As in \cref{eq:3h}, 
\begin{equation}
\Big\lVert \sum_{n=0}^N \tilde{\gamma}_n(-) x_n - \sum_{n=0}^M \tilde{\gamma}_n(-) x_n \Big\rVert_{L^2(\tilde{\Omega},\tilde{\calF},\tilde{\bbP};\scrX)}^2 = \sum_{n=N+1}^M \lVert x_n \rVert_\scrX^2 
\label{eq:h7}
\end{equation}
for $N\leq M$. If (1) holds, then the right-hand side of \cref{eq:h7} is $o(1)$ as $N\to\infty$, uniformly in $M$. This means that the sequence $\{\Sigma_N \}_{N=0}^\infty \in L^2(\tilde{\Omega},\tilde{\calF},\tilde{\bbP};\scrX)$
is Cauchy in $L^2(\tilde{\Omega},\tilde{\calF},\tilde{\bbP};\scrX) $, and consequently convergent. So (6) holds, witnessed by $p=2$. 
\end{proof}

\begin{lemma}[Kahane-Khintchine]
	(6) $\iff$ (7). 
	\label{lem:Khintchine}
\end{lemma}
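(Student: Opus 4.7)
The direction (7)$\Rightarrow$(6) is immediate. For (6)$\Rightarrow$(7), my plan is to reduce everything to a single classical inequality which justifies the lemma's name: for every $p, q \in [1,\infty)$ there exists a universal constant $K_{p,q} > 0$ such that, for any finite sequence $y_0, \ldots, y_M \in \scrX$,
\[
\Big\lVert \sum_{n=0}^M \tilde{\gamma}_n y_n \Big\rVert_{L^q(\tilde{\Omega}, \tilde{\calF}, \tilde{\bbP}; \scrX)} \leq K_{p,q} \Big\lVert \sum_{n=0}^M \tilde{\gamma}_n y_n \Big\rVert_{L^p(\tilde{\Omega}, \tilde{\calF}, \tilde{\bbP}; \scrX)}.
\]
This is the Kahane-Khintchine inequality for $\scrX$-valued Gaussian sums. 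Granting it, the lemma will follow in essentially one line: applied to the partial-sum differences $\Sigma_{N'} - \Sigma_N = \sum_{n=N+1}^{N'} \tilde{\gamma}_n x_n$, which are themselves Gaussian sums of the same form, the inequality shows that if $\{\Sigma_N\}_{N=0}^\infty$ is Cauchy in $L^p(\tilde{\Omega};\scrX)$ for some one $p$, then it is Cauchy -- hence, by completeness, convergent -- in $L^q(\tilde{\Omega}; \scrX)$ for every $q \in [1, \infty)$.

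The substantive step is therefore establishing the displayed inequality. The approach that fits most naturally with this paper (where Fernique's theorem is invoked independently later in this section) is to derive it from Fernique's theorem: any $\scrX$-valued Gaussian vector $S$ satisfies $\tilde{\bbE}\exp(\epsilon \lVert S \rVert_\scrX^2) < \infty$ for some $\epsilon > 0$, with both $\epsilon$ and the resulting upper bound controlled by the median (equivalently by $\tilde{\bbE}\lVert S\rVert_\scrX$) of $\lVert S \rVert_\scrX$. Applied to $S = \sum_{n=0}^M \tilde{\gamma}_n y_n$, Fernique's theorem yields $L^q$-control of $\lVert S \rVert_\scrX$ by $\tilde{\bbE} \lVert S \rVert_\scrX$; a subsequent application of Hölder's inequality promotes this to control by the $L^p$-norm of $\lVert S \rVert_\scrX$, with the constant depending only on $p$ and $q$ and, crucially, not on $M$ or on the $y_n$'s.

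The main obstacle is Fernique's theorem itself, which is nontrivial but classical and can be cited directly, e.g.\ from \cite{Fernique} or the expositions in \cite[Ch.\ 2]{Hytonen2016analysis}. An alternative route -- which bypasses Fernique -- would be to invoke the scalar Khintchine inequalities and transfer them to the Banach-space setting via a symmetrization/contraction argument plus Fubini, but this is no shorter than Fernique. Either way, once the Kahane-Khintchine comparison is in hand, the lemma itself requires only the one-line application to Cauchy differences outlined above.
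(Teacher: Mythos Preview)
Your proposal is correct and takes essentially the same approach as the paper: both invoke the Kahane--Khintchine inequality for Gaussian sums and apply it to the Cauchy differences $\Sigma_{N'}-\Sigma_N$ to transfer convergence from one $L^p$ to all $L^q$. The only difference is cosmetic---the paper simply cites the inequality (pivoting through $L^2$ with constants $c_p,C_p$, from \cite[Proposition 2.7]{vanNeerven2010}) rather than sketching a derivation via Fernique, so your discussion of how to prove the inequality itself is extra but not needed.
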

\begin{proof}  One formulation of the Kahane-Khintchine inequality -- cf.\ \cite[Proposition 2.7]{vanNeerven2010} --  states that for each $p\in [1,\infty)$ there exist universal constants $c_p,C_p>0$ such that 
	\begin{equation}
	c_p\Big\lVert \sum_{n=N}^M \tilde{\gamma}_n(-) x_n \Big\rVert_{L^p(\tilde{\Omega},\tilde{\calF}, \tilde{\bbP};\scrX)} \leq 
	\Big\lVert \sum_{n=N}^M \tilde{\gamma}_n(-) x_n \Big\rVert_{L^2(\tilde{\Omega},\tilde{\calF}, \tilde{\bbP};\scrX)}  \leq C_p \Big\lVert \sum_{n=N}^M \tilde{\gamma}_n(-) x_n \Big\rVert_{L^p(\tilde{\Omega},\tilde{\calF}, \tilde{\bbP};\scrX)}.
	\label{eq:hz2}
	\end{equation}
	(By H\"older's inequality, one of $c_p,C_p$ is equal to one, depending on whether $p\leq 2$ or $p\geq 2$.)

	Given this, the sequence $\{\Sigma_N \}_{N=0}^\infty$ is Cauchy and hence convergent in $L^2(\tilde{\Omega},\tilde{\calF}, \tilde{\bbP};\scrX)$ if and only if it is Cauchy  in $L^p(\tilde{\Omega},\tilde{\calF}, \tilde{\bbP};\scrX)$, and so we can conclude \Cref{lem:Khintchine}. 
\end{proof}

Recall the Paley-Zygmund inequality, which says that given a nonnegative random variable $Z: \tilde{\Omega}\to [0,\infty]$ with finite variance,
\begin{equation}
\tilde{\bbP} [Z > \theta \bbE Z] \geq (1-\theta)^2 \bbE [Z]^2/\bbE [Z^2].
\label{eq:PZ}
\end{equation}
for all $\theta \in [0,1]$. 
\Cref{eq:PZ} gives a quick proof of the implication (5) $\Rightarrow$ (6), (7) which can be substituted in for Fernique's theorem or the converse of Sazonov's theorem in a proof of the implication (2) $\Rightarrow$ (1). 
\begin{lemma}
	(5) $\Rightarrow$ (6), (7).
	\label{lem:4->5}
\end{lemma}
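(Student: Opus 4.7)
The plan is to show (5) $\Rightarrow$ (6) with $p=2$, whence (5) $\Rightarrow$ (7) follows by \Cref{lem:Khintchine}. The underlying principle is that Gaussian norm distributions are sufficiently concentrated that convergence in probability already forces convergence in $L^2$; \cref{eq:PZ} supplies the requisite reverse Chebyshev estimate.

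Concretely, I would set $S_{N,M} = \sum_{n=N+1}^M \tilde{\gamma}_n x_n$ and $\alpha_{N,M}^2 = \bbE \lVert S_{N,M}\rVert_\scrX^2 = \sum_{n=N+1}^M \lVert x_n\rVert_\scrX^2$, the latter equality being the Pythagorean identity in $L^2(\tilde{\Omega},\tilde{\bbP};\scrX)$ for the orthonormal system $\{\tilde{\gamma}_n\}$. Applying Paley-Zygmund to the nonnegative random variable $Z = \lVert S_{N,M}\rVert_\scrX^2$ at $\theta=1/2$ then yields
\begin{equation}
\tilde{\bbP}\Big[ \lVert S_{N,M}\rVert_\scrX \geq \tfrac{1}{\sqrt 2}\, \alpha_{N,M} \Big] \geq \frac{1}{4}\cdot \frac{\alpha_{N,M}^4}{\bbE \lVert S_{N,M}\rVert_\scrX^4}.
\label{eq:pz-applied-plan}
\end{equation}
Assuming a fourth-moment bound $\bbE \lVert S_{N,M}\rVert_\scrX^4 \leq K \alpha_{N,M}^4$ with universal $K$, the right-hand side of \cref{eq:pz-applied-plan} is bounded below by a positive constant, uniformly in $N,M$. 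The hypothesis (5) makes $S_{N,M}\to 0$ in probability as $N,M\to\infty$ (since $\{\Sigma_N\}$ is then Cauchy in probability), so if $\alpha_{N,M}$ failed to tend to zero a subsequence with $\alpha_{N_k,M_k}\geq \varepsilon_0>0$ would contradict \cref{eq:pz-applied-plan}. Hence $\alpha_{N,M}\to 0$, which is precisely the $L^2(\tilde{\Omega},\tilde{\calF},\tilde{\bbP};\scrX)$-Cauchy criterion for $\{\Sigma_N\}_{N=0}^\infty$, proving (6) with $p=2$.

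The main obstacle is the Gaussian fourth-moment comparison. I would handle it via Isserlis' (Wick's) theorem: letting $C_{N,M}$ denote the covariance operator of the centered Gaussian vector $S_{N,M}$ (so that $\operatorname{Tr} C_{N,M} = \alpha_{N,M}^2$), a basis-wise computation gives $\bbE \lVert S_{N,M}\rVert_\scrX^4 = (\operatorname{Tr} C_{N,M})^2 + 2\lVert C_{N,M}\rVert_{\mathrm{HS}}^2$, and the elementary estimate $\lVert C_{N,M}\rVert_{\mathrm{HS}}^2 \leq \lVert C_{N,M}\rVert_{\mathrm{op}}\operatorname{Tr} C_{N,M} \leq (\operatorname{Tr} C_{N,M})^2$ (using $\lVert C_{N,M}\rVert_{\mathrm{op}}\leq \operatorname{Tr} C_{N,M}$ for a nonnegative trace-class operator) yields $K=3$. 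This could alternatively be deduced from Gaussian hypercontractivity or \emph{ex post facto} from \Cref{lem:Khintchine}, but the Isserlis calculation keeps the step self-contained and sidesteps any appearance of circularity in deriving (6) from (5).
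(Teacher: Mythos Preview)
Your argument is correct and shares the same core idea as the paper's: apply the Paley--Zygmund inequality to $\lVert \Sigma_{M}-\Sigma_{N}\rVert_\scrX^2$ (or its $p$th power) so that convergence in probability forces the $L^2$-norms of the increments to vanish. The one substantive difference is how you control the ratio $(\bbE Z)^2/\bbE Z^2$. The paper bounds this uniformly using the Kahane--Khintchine inequalities \emph{inside} the Paley--Zygmund step (which has the advantage of working in an arbitrary separable Banach space and for every $p$ at once), whereas you compute the fourth moment directly via Isserlis' theorem, exploiting the Hilbert structure to obtain the explicit constant $K=3$, and only afterwards invoke \Cref{lem:Khintchine} to pass from $p=2$ to general $p$. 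Your route is a bit more concrete for the $p=2$ case and avoids black-boxing Kahane--Khintchine at that step; the paper's route is more uniform in $p$ and does not use that $\scrX$ is Hilbert. A minor remark: the identity $\bbE\lVert S_{N,M}\rVert_\scrX^4=(\operatorname{Tr}C_{N,M})^2+2\lVert C_{N,M}\rVert_{\mathrm{HS}}^2$ holds verbatim for the covariance on the underlying \emph{real} Hilbert space $\scrX_\bbR$; in the complex setting the direct Isserlis expansion produces an extra term $\sum_{n,m}\langle x_n,x_m\rangle_\scrX^2$, but this is real and bounded in absolute value by $\sum_{n,m}|\langle x_n,x_m\rangle_\scrX|^2$, so the bound $K=3$ survives either way.
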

We follow the argument in \cite[Chapter 6, Corollary  6.2.9]{Hytonen2016analysis}. The following makes sense for $\scrX$ any separable Banach space. 
\begin{proof}
	Suppose that (5) holds, so that the partial series $\smash{\Sigma_N=\sum_{n=0}^N \tilde{\gamma}_n x_n}$ converges strongly in probability to some $\scrX$-valued random variable $\smash{\Sigma:\tilde{\Omega}\to \scrX}$. Fix $\delta>0$ and to be decided $\epsilon>0$. Pick $N_0\in \bbN$ sufficiently large such that 
	\begin{equation} 
	\tilde{\bbP}[ \lVert \Sigma_{N'} - \Sigma_{N''} \rVert_\scrX > \delta/2] < \epsilon 
	\end{equation} 
	for all $N',N'' \geq N_0$. Let $Z = \lVert \Sigma_{N'} - \Sigma_{N''} \rVert^p_\scrX$ for $p\in [1,\infty)$, $Z:\tilde{\Omega}\to [0,\infty)$. Clearly, this has finite variance. Our goal is to show that if we choose $\epsilon = \epsilon(\delta)$ sufficiently small, $\bbE Z$ satisfies 
	\begin{equation} 
   	\bbE Z < \delta^p,
   	\label{eq:6}
	\end{equation} 
	for such $N',N''$, 
	which by the arbitrariness of $\delta$ implies that $\{\Sigma_N\}_{N=0}^\infty \subseteq L^p(\tilde{\Omega},\tilde{\calF},\tilde{\bbP};\scrX)$ is a Cauchy hence convergent sequence in $L^p(\tilde{\Omega},\tilde{\calF},\tilde{\bbP};\scrX)$. Suppose, to the contrary, that $\bbE Z \geq \delta^p$, so that $\bbP[Z > \theta \bbE Z] = \bbP[\lVert \Sigma_{N'} - \Sigma_{N''} \rVert_{\scrX}>\delta/2] < \epsilon$ for $\theta = (\delta/2)^p/\bbE Z \in [0,1]$. By the Paley-Zygmund inequality and the Kahane-Khintchine inequality, this implies that 
	\begin{equation}
	\epsilon  > \Big( 1 - \frac{\delta^p}{2^p \bbE Z} \Big)^2 \frac{\bbE [Z]^2}{\bbE [Z^2]} \geq \Big( 1 - \frac{\delta^p}{2^p \bbE Z} \Big)^2  \frac{c_{2p}^{2p}}{C_p^{2p}},
	\end{equation}
	where $c_p,C_p$ are as in \cref{eq:hz2}.
	Solving for $\bbE Z$ yields the inequality $(\delta/2)^p  > (1- \epsilon^{1/2} C_p^p c_{2p}^{-p}) \bbE Z$.
	 So taking $\epsilon$ sufficiently small,  \cref{eq:6} does indeed hold (and contradicting our assumption to the contrary).
\end{proof}

The previous argument yields the implication (5) $\Rightarrow$ (1) via \Cref{lem:6->1}. 
We now consider the implication (5) $\Rightarrow$ (1) using instead Fernique's theorem on the exponential integrability of Gaussian measures, and we prove the last statement in \Cref{prop:Sazonov_equiv}. Fernique's theorem  states that if $\scrX$ is a separable Banach space, then a centered $\scrX$-valued Gaussian random variable is exponentially integrable for some small base (in the sense of \cref{eq:za}), and in particular has moments of all finite orders. See  \cite{Fernique}\cite[\S2.2]{Prato}\cite[Chapter 6]{Hytonen2016analysis}. 

In the following arguments, we need the following: a sequence 
\begin{equation} 
\{\Gamma_N:\operatorname{Borel}(\bbR^D)\to [0,1]\}_{N=0}^\infty
\end{equation} 
of (possibly degenerate) Gaussian measures on finite dimensional Euclidean space converges in law if and only if the means ${\bm\mu}_N = \int_{\bbR^d} x \dd \Gamma_N(x) \in \bbR^D$ and covariances 
\begin{equation} 
	(\bmsigma_N)_{i,j} = \int_{\bbR^d} x_i x_j \dd \Gamma_N(x) \in \bbR^{D\times D}
\end{equation} 
converge as $N\to\infty$, in which case the limit is a (possibly degenerate) Gaussian measure with mean $\smash{\lim_{N\to\infty} {\bmmu}_N}$ and covariance matrix $\smash{\bmsigma = \lim_{N\to\infty} \bmsigma_N}$. This can be proven directly or alternatively using L\'evy's continuity theorem \cite[\S26]{Billingsley}.  
\begin{lemma} \label{Fernique_lemma}
	If (4) holds, then the limit $\Sigma:\tilde{\Omega}\to \scrX$ is Gaussian, which by Fernique's theorem implies that there exists some constant $\alpha>0$ such that 
	\begin{equation}
	e^{- \alpha \lVert \Sigma \rVert_{\scrX}^2} \in L^1(\tilde{\Omega},\tilde{\calF},\tilde{\bbP};\scrX). 
	\label{eq:za}
	\end{equation}
	Moreover, $\lVert \Sigma \rVert_\scrX$ has moments of all orders and (1) holds. 
\end{lemma}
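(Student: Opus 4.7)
The plan is a three-step procedure. I would first show that $\Sigma$ is a centered Gaussian $\scrX$-valued random variable, then invoke Fernique's theorem to get exponential integrability of $\lVert \Sigma \rVert_\scrX$ and hence all polynomial moments, and finally extract (1) from finiteness of $\bbE \lVert \Sigma \rVert_\scrX^2$ via Parseval in the Hilbert space $\scrX$.

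For the first step, fix $x^* \in \scrX^*$ and observe that $\langle \Sigma_N, x^* \rangle_\scrX = \sum_{n=0}^N \tilde{\gamma}_n \langle x_n, x^* \rangle_\scrX$ is, as a finite linear combination of i.i.d.\ standard real Gaussians, a centered Gaussian with second moment $\sum_{n=0}^N |\langle x_n, x^* \rangle_\scrX|^2$. By hypothesis (4), $\langle \Sigma_N, x^* \rangle_\scrX \to \langle \Sigma, x^* \rangle_\scrX$ in probability, and hence in distribution. The family of centered Gaussian laws is closed under convergence in distribution --- pointwise convergence of the characteristic functions $\exp(-\sigma_N^2 t^2/2)$ to a continuous limit forces $\{\sigma_N^2\}_N$ to converge to some $v(x^*)<\infty$ --- so $\langle \Sigma, x^* \rangle_\scrX$ is itself centered Gaussian, with second moment $v(x^*) = \sum_{n=0}^\infty |\langle x_n, x^* \rangle_\scrX|^2$. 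Since $x^*$ was arbitrary, $\Sigma$ is a centered Gaussian $\scrX$-valued random variable.

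Fernique's theorem \cite{Fernique} then supplies some $\alpha > 0$ with $\bbE \exp(\alpha \lVert \Sigma \rVert_\scrX^2) < \infty$ (the minus sign in the exponent in the stated lemma should be read as a plus, as otherwise the estimate is vacuous). The pointwise bound $x^p \leq C_{p,\alpha} \exp(\alpha x^2)$ for $x\geq 0$ immediately yields finite polynomial moments of $\lVert \Sigma \rVert_\scrX$ of every order; in particular $\bbE \lVert \Sigma \rVert_\scrX^2 < \infty$. To finish, fix any orthonormal basis $\{e_k\}_{k\in \bbN}$ of $\scrX$. Applying Parseval pointwise, taking expectations via Tonelli (all summands are nonnegative), and substituting the second-moment formula from step one,
\begin{equation*}
\bbE \lVert \Sigma \rVert_\scrX^2 = \sum_{k\in \bbN} \bbE |\langle \Sigma, e_k \rangle_\scrX|^2 = \sum_{k\in \bbN}\sum_{n\in \bbN} |\langle x_n, e_k \rangle_\scrX|^2 = \sum_{n\in \bbN} \lVert x_n \rVert_\scrX^2,
\end{equation*}
the final rearrangement being Tonelli once more together with Parseval in the opposite direction. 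Finiteness of the left-hand side is exactly condition (1).

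The only nontrivial probabilistic input is Fernique's theorem; the rest is routine bookkeeping. The one point to keep in mind is the rigidity of centered Gaussian laws under convergence in distribution, invoked both when showing $\Sigma$ is Gaussian and when identifying the second moments $\bbE|\langle \Sigma, e_k \rangle_\scrX|^2$ with the sums $\sum_n|\langle x_n, e_k \rangle_\scrX|^2$. No further obstacle is anticipated.
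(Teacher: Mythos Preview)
Your proposal is correct and follows essentially the same route as the paper: show each $\langle \Sigma, x^*\rangle_\scrX$ is centered Gaussian as a distributional limit of centered Gaussians, invoke Fernique to get $\bbE\lVert\Sigma\rVert_\scrX^2<\infty$, then expand via Parseval and Tonelli along an orthonormal basis and identify the resulting second moments with $\sum_n|\langle x_n,e_k\rangle_\scrX|^2$ via the rigidity of Gaussian limits. You also correctly flag the sign typo in the exponent of the displayed integrability statement.
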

\begin{proof}
	Suppose that (4) holds, so that there exists a random variable $\Sigma:\tilde{\Omega}\to \scrX$ such that
	$\langle \Sigma_N, x^* \rangle_\scrX$ converges to $\langle \Sigma,x^*\rangle_\scrX$ in probability for each individual $x^*\in \scrX$ and therefore converges in distribution. Each individual 
	\begin{equation} 
	\langle \Sigma_N ,x^* \rangle_\scrX = \sum_{n=0}^N \tilde{\gamma}_n \langle x_n ,x^* \rangle_\scrX  
	\end{equation} 
	is a sum of independent centered Gaussian random variables and therefore has a centered Gaussian law. This implies that $\langle \Sigma,x^* \rangle_\scrX$ has a centered Gaussian law.  That is, $\Sigma$ is a Gaussian $\scrX$-valued random variable. 
	We deduce \cref{eq:za} via Fernique's theorem. 
	
	To see that this implies (1), first note that it implies that $\lVert \Sigma \rVert_{\scrX}$ has all moments. In particular, 
	$\bbE (\lVert \Sigma \rVert^2_{\scrX})<\infty$. By the monotone convergence theorem, 
	\begin{equation}
	\bbE (\lVert \Sigma \rVert^2_{\scrX}) = \sum_{m=1}^\infty \bbE ( |\langle \Sigma , x^*_m \rangle_\scrX|^2 ) \label{eq:43h}
	\end{equation}
	for $\{x_m^* \}_{m=1}^\infty$ any orthonormal basis of $\scrX$. Since $\langle \Sigma , x^*_m \rangle_\scrX$ is Gaussian, the convergence in law of the Gaussian $\langle \Sigma_N , x^*_m \rangle_\scrX$ to it as $N\to\infty$ implies that $\bbE ( |\langle \Sigma_N  , x^*_m \rangle_\scrX|^2) \to \bbE ( |\langle \Sigma,x^*_m \rangle_{\scrX}|^2)$ as $N\to\infty$. Computing out the former,
	\begin{equation}
	\bbE (| \langle \Sigma,x^*_m \rangle_{\scrX}|^2) = \sum_{n=0}^\infty |\langle x_n ,x_m^* \rangle_{\scrX}|^2. 
	\end{equation}
	Plugging this into \cref{eq:43h}, 
	\begin{equation}
	\bbE (\lVert \Sigma \rVert^2_{\scrX})  = \sum_{m=1}^\infty \sum_{n=0}^\infty |\langle x_n , x_m^* \rangle_\scrX|^2 = \sum_{n=0}^\infty \sum_{m=1}^\infty  |\langle x_n , x_m^* \rangle_\scrX|^2 = \sum_{n=0}^\infty \lVert x_n \rVert_{\scrX}^2 . 
	\label{eq:xxv}
	\end{equation}
	So, $\sum_{n=0}^\infty \lVert x_n \rVert^2_\scrX$ is finite. 
\end{proof}

We can strengthen \Cref{Fernique_lemma} to only require the verification of (4) on a dense subset $S$ of functionals, which we assume without loss of generality is a cone in $\scrX^*\cong \scrX$.  This yields the penultimate statement in \Cref{prop:Sazonov_equiv}. In order to accomplish this, we can construct orthonormal $x_m^* \in S$ via the Gram-Schmidt process. For the proof, it suffices to note that an $\scrX$-valued random variable $\Sigma:\tilde{\Omega}\to \scrX$ is Gaussian if  $\langle \Sigma,x^* \rangle_\scrX$ is Gaussian for all $x^*$ in some dense subset of $\scrX$. In order to conclude the result, we introduce a bit of Fourier analysis. 
Given a separable real Banach space $\scrY$ and a Borel probability measure 
\begin{equation} 
\mu : \operatorname{Borel}(\scrY)\to [0,1]
\end{equation} 
on $\scrY$ -- which we recall (like any other Borel probability measure on any Polish space) is automatically Radon -- its \emph{Fourier transform} is the function $\calF\mu : \scrY^* \to \bbC$ given by 
\begin{equation}
\calF \mu (\Lambda) = \int_{\scrY} e^{- i \Lambda(y)} \dd \mu(y). 
\label{eq:fu}
\end{equation}
We work with real Banach spaces so that the right-hand side of \cref{eq:fu} is well-defined. 
It can be shown -- for instance using the inner regularity of $\mu$ (or \cite{Bogachev} the dominated convergence theorem and the equivalence of sequential continuity and continuity in a metric space) -- that $\calF \mu$ is continuous with respect to the operator norm topology on $\scrY^*$. See \cite[Part II-Chapter II]{SchwartzRadon} for a discussion in full generality.

We will apply this definition with $\scrY = \scrX_\bbR$ the real Banach space underlying $\scrX$, which for the sake of generality we can take to be an arbitrary separable complex Banach space. We conflate $(\scrX^*)_\bbR \cong (\scrX_\bbR)^*$.
\begin{lemma}
	If $\scrX$ is a separable complex Banach space and $S\subseteq \scrX^*$ is a dense subset, then an $\scrX$-valued random variable $\Sigma:\tilde{\Omega}\to \scrX$ is Gaussian if and only if $\Lambda\circ \Sigma$ is a $\bbC$-valued Gaussian random variable for each $\Lambda \in S$. 
	\label{density_lemma}
\end{lemma}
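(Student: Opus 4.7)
The forward (``only if'') implication is immediate: if $\Sigma$ is Gaussian then by definition $\Lambda \circ \Sigma$ is $\bbC$-valued Gaussian for every $\Lambda \in \scrX^*$, hence in particular for every $\Lambda \in S$. The content is in the reverse direction.

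For the reverse direction, the plan is to exploit pointwise continuity of $\Lambda \mapsto \Lambda \circ \Sigma$ together with the closedness of Gaussian laws under weak limits. Fix an arbitrary $\Lambda \in \scrX^*$. By density of $S$ in the operator norm topology there exists a sequence $\{\Lambda_n\}_{n=0}^\infty \subseteq S$ with $\lVert \Lambda_n - \Lambda \rVert_{\mathrm{op}} \to 0$. For each $\tilde{\omega}\in \tilde{\Omega}$ we have the pointwise estimate
\begin{equation}
|\Lambda_n(\Sigma(\tilde{\omega})) - \Lambda(\Sigma(\tilde{\omega}))| \leq \lVert \Lambda_n - \Lambda \rVert_{\mathrm{op}} \lVert \Sigma(\tilde{\omega}) \rVert_{\scrX},
\end{equation}
which tends to $0$ as $n\to\infty$. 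Hence $\Lambda_n \circ \Sigma \to \Lambda \circ \Sigma$ \emph{everywhere} on $\tilde{\Omega}$, and in particular the laws $(\Lambda_n \circ \Sigma)_* \tilde{\bbP}$ converge weakly to the law $(\Lambda \circ \Sigma)_* \tilde{\bbP}$ on $\bbC\cong \bbR^2$.

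By hypothesis, $(\Lambda_n \circ \Sigma)_* \tilde{\bbP}$ is, for each $n$, a (possibly degenerate) Gaussian measure on $\bbR^2$. The fact -- recalled in the paragraph preceding \Cref{Fernique_lemma} -- that the class of Gaussian measures on a fixed finite-dimensional Euclidean space is closed under weak convergence (with the limit determined by the limits of the means and covariance matrices) yields immediately that $(\Lambda \circ \Sigma)_* \tilde{\bbP}$ is also Gaussian, so that $\Lambda \circ \Sigma$ is a $\bbC$-valued Gaussian random variable. Since $\Lambda \in \scrX^*$ was arbitrary, this establishes that $\Sigma$ is Gaussian.

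There is really no serious obstacle here: the only nontrivial ingredient is the closedness of Gaussian laws on $\bbR^D$ under weak limits, which has already been invoked in the paper. The role of the lemma is essentially to package the observation that, because $\tilde{\omega}\mapsto \lVert \Sigma(\tilde{\omega}) \rVert_{\scrX}$ is everywhere finite, operator-norm density of $S$ in $\scrX^*$ upgrades to pointwise-everywhere density of $\{\Lambda \circ \Sigma : \Lambda \in S\}$ among the $\bbC$-valued random variables $\{\Lambda \circ \Sigma : \Lambda \in \scrX^*\}$, which is more than enough to propagate Gaussianity.
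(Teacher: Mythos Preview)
Your proof is correct and follows essentially the same approach as the paper's: approximate an arbitrary functional by elements of $S$ in operator norm, deduce convergence of the corresponding scalar random variables, and invoke the closedness of Gaussian laws on finite-dimensional Euclidean space under limits. The only cosmetic difference is that the paper routes the limit through continuity of the Fourier transform $\calF\Sigma_*\tilde{\bbP}$ on $\scrX_\bbR^*$ (and works with real functionals), whereas you invoke pointwise-everywhere convergence and weak convergence of the laws on $\bbC\cong\bbR^2$ directly; these are equivalent formulations of the same argument.
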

\begin{proof} We may assume without loss of generality that $S$ is a cone in $\scrX^*$, meaning that it is closed under scalar multiplication. Note that $S_\bbR$ is dense in $\scrX^*_\bbR$. 
	Let $\Lambda \in \scrX_\bbR^*$, $\Lambda_1,\Lambda_2,\ldots \in S_\bbR$ converge to $\Lambda$ in operator norm. For each $\lambda \in \bbR$, $\lambda \Lambda_m \to \lambda \Lambda$ in operator norm. Suppose that $\Sigma:\tilde{\Omega}\to \scrX$ is a random variable satisfying the hypothesis of the lemma. By the continuity of the Fourier transform  $\calF \Sigma_* \tilde{\bbP}$ of the law of $\Sigma$ with respect to the operator norm, 
	\begin{equation}
	(\calF \Sigma_* \tilde{\bbP}) (\lambda \Lambda_m) \to 
	(\calF \Sigma_* \tilde{\bbP}) (\lambda \Lambda) 
	\label{eq:10}
	\end{equation}
	as $m\to\infty$ for each $\lambda \in \bbR$. The function on the left-hand side of \cref{eq:10} is a (possibly degenerate) Gaussian function of $\lambda$ for each $m\in \bbN^+$, and so $\lambda\mapsto \calF \Sigma_* \tilde{\bbP} (\lambda \Lambda) $ is a pointwise limit of Gaussian functions on $\bbR$. It follows that $\calF \Sigma_* \tilde{\bbP} (\lambda \Lambda)$ is a Gaussian or constant, and therefore that $\Lambda(\Sigma)$ is a (possibly degenerate) $\bbR$-valued Gaussian random variable. 
	
	Since $\Lambda \in \scrX^*_\bbR$ was arbitrary, we conclude that $\Sigma$ is a Gaussian $\scrX_\bbR$-valued random variable, which implies that it is a Gaussian $\scrX$-valued random variable.
\end{proof}

We note that the equivalence of (2), (3), (4) is a very special case of the It\^o-Nisio theorem \cite{ItoNisio}\cite{Hytonen2016analysis}: 
\begin{proof}[Proof of (2) , (4) $\Rightarrow$ (3) from It\^o-Nisio]
	The It\^o-Nisio theorem says in particular that (2), (3), and (4) are equivalent even if $\scrX$ is replaced by any separable $\bbC$-Banach space and the Gaussians are replaced by arbitrary but still independent symmetric random variables. 
\end{proof}
We now have one complete proof of \Cref{prop:Sazonov_equiv}. 

The following slight strengthening of the It\^o-Nisio theorem due to Hoffmann-J{\o}rgensen is useful (but not strictly necessary for the proof of \Cref{prop:Sazonov_equiv} given \Cref{density_lemma}). 
Given a dense set of functionals $S\subseteq \scrX^*$, where $\scrX$ is still an arbitrary separable complex Banach space, we consider the topology $\tau_S$ on the vector space $\scrX$ generated by the elements of $S$.
Clearly, $\tau_S$ is weaker than the norm topology on $\scrX$ and furnishes $\scrX$ with the structure of a locally convex Hausdorff vector space.   (So $\tau_S$ is the topology of the LCTVS we called `$\scrX_S$' in \S\ref{subsec:random}.) Moreover, the unit ball $\{x \in \scrX: \lVert x \rVert_\scrX \leq 1\}$ of $\scrX$ is $\tau_S$-closed, as we noted earlier. The following is therefore essentially a special case of \cite[Theorem 6.2]{Hoffmann1974}. 
\begin{theoremp}
	\label{Ito-Nisio}
	Suppose that $\scrX$ is a separable Banach space and that $x_0,x_1,x_2,\ldots : \smash{ \tilde{\Omega}}\to \scrX$ are $\scrX$-valued independent symmetric random variables. If there exists an $\scrX$-valued random variable $\Sigma:\tilde{\Omega}\to \scrX$ and dense subset $S\subseteq \scrX^*$ such that 
	\begin{equation} 
		\Lambda\Big[\sum_{n=0}^N x_n\Big] \to \Lambda(\Sigma)
	\end{equation} 
	in probability for each $\Lambda \in S$, then, for $\Sigma_N = \sum_{n=0}^N x_n$,  $\lim_{N\to\infty} \Sigma_N  = \Sigma$ strongly $\tilde{\bbP}$-almost surely. 
\end{theoremp}
We sketch how this follows from a standard proof of the It\^o-Nisio theorem.
\begin{proof}[Proof Sketch]
	We may assume without loss of generality that $S$ is a subspace of $\scrX^*$. 
	As above, let $\scrX_\bbR$ denote the real Banach space underlying $\scrX$, and note that the set $S_\bbR = \{\Re \Lambda : \Lambda \in S\}$ is dense in $\{\Re \Lambda :\Lambda \in \scrX^* \}=(\scrX_\bbR)^*$. Since the Fourier transform $(\scrX_\bbR)^* \to \bbC$ of the law of $\Sigma$ is necessarily continuous, it is determined by its restriction to $S_\bbR$. 
	
	The Hahn-Banach separation theorem implies that given any $x_0\in \scrX$ and nonnegative $R < \lVert x_0 \rVert_\scrX$, there exists a $\Lambda \in S_{\bbR}$ such that $\Lambda(x)>0$ for every 
	\begin{equation} 
	x\in B_R(x_0)=\{x \in \scrX: \lVert x - x_0 \rVert_{\scrX}\leq R\}.
	\end{equation} 
	The argument in \cite[\S6.4]{Hytonen2016analysis} used to prove the It\^o-Nisio theorem therefore only needs the assumption of weak convergence in probability to be verified on the elements of $S_\bbR$. 
\end{proof}

\subsection{Second and Third Proof of \Cref{prop:Sazonov_equiv}}
\label{subsec:3.st}
Recall the Kolmogorov two-series theorem \cite[Theorem 22.6]{Billingsley}, which states that given a sequence $r_0,r_1,r_2,\ldots$ of independent $\bbC$-valued random variables on some probability space with finite mean $\bbE r_n = \mu_n$ and variance $\operatorname{Var}(r_n) \in [0,\infty]$,
\begin{equation}
\sum_{n=0}^\infty \mu_n < \infty \text{ and } \sum_{n=0}^\infty \operatorname{Var}(r_n) < \infty \Rightarrow \sum_{n=0}^N r_n \text{ converges as $N\to\infty$ almost surely}. 
\end{equation}
Given the setup of \Cref{prop:Sazonov_equiv}, we will apply the two-series theorem to the $\bbC$-valued random variables $\langle x^*, \tilde{\gamma}_n(-) x_n \rangle_\scrX$ for $x^* \in \scrX$ the elements of an orthonormal basis of $\scrX$. 

e now prove, independently of the reasoning in \S\ref{subsec:3.f}:

\begin{lemma}
	(1) $\Rightarrow$ (2). 
	\label{lem:1->2}
\end{lemma}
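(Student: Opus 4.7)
The plan is to apply Kolmogorov's two-series theorem coordinate by coordinate with respect to a fixed orthonormal basis $\{e_m\}_{m=1}^\infty$ of $\scrX$. For each fixed $m$, I would apply the two-series theorem to the sequence $\{\tilde{\gamma}_n \langle x_n, e_m\rangle_{\scrX}\}_{n=0}^\infty$ of independent, centered $\bbC$-valued random variables. Their variances are $|\langle x_n, e_m\rangle_\scrX|^2 \leq \lVert x_n\rVert_\scrX^2$ by Cauchy-Schwarz, so hypothesis (1) guarantees that the sum of variances is finite, and the two-series theorem delivers the $\tilde{\bbP}$-a.s.\ convergence of $\langle \Sigma_N(\tilde\omega), e_m\rangle_\scrX$ to some limit $A_m(\tilde\omega)\in\bbC$. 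Intersecting countably many full-measure sets, one obtains a single full-measure set on which $\langle \Sigma_N, e_m\rangle_\scrX \to A_m$ for \emph{every} $m$ simultaneously.

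Next, I would produce the prospective weak limit as an actual element of $\scrX$. Each $A_m$ is a centered Gaussian with $\bbE |A_m|^2 = \sum_n |\langle x_n,e_m\rangle_\scrX|^2$, so by Tonelli's theorem and Parseval's identity applied to the expansion of each $x_n$,
\begin{equation*}
\bbE \sum_{m=1}^\infty |A_m|^2 = \sum_{m=1}^\infty \sum_{n=0}^\infty |\langle x_n,e_m\rangle_\scrX|^2 = \sum_{n=0}^\infty \lVert x_n\rVert_\scrX^2 < \infty.
\end{equation*}
Hence $\{A_m(\tilde\omega)\}_m \in \ell^2(\bbN)$ almost surely, and the prescription $\Sigma_\infty(\tilde\omega) = \sum_m A_m(\tilde\omega) e_m$ defines a well-formed element of $\scrX$ almost surely. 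By construction $\langle \Sigma_N, e_m\rangle_\scrX \to \langle \Sigma_\infty, e_m\rangle_\scrX$ for every $m$ a.s., which is weak convergence tested against the total family $\{e_m\}\subset\scrX\cong\scrX^*$.

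The hard part is upgrading weak convergence against the basis to weak convergence against every element of $\scrX$: convergence on a total set combined with \emph{norm-boundedness} of the sequence is equivalent to weak convergence, by the uniform boundedness principle, so it suffices to show that $\sup_N \lVert \Sigma_N(\tilde\omega)\rVert_\scrX < \infty$ almost surely. To establish this, I would note that $\{\Sigma_N\}_N$ is an $\scrX$-valued $L^2$-martingale with respect to the filtration $\calF_N = \sigma(\tilde\gamma_0,\ldots,\tilde\gamma_N)$, so by Jensen's inequality $\{\lVert \Sigma_N\rVert_\scrX\}_N$ is a nonnegative real-valued submartingale. Doob's $L^2$-maximal inequality combined with the orthogonality computation $\bbE\lVert \Sigma_N\rVert_\scrX^2 = \sum_{n=0}^N\lVert x_n\rVert_\scrX^2$ then yields
\begin{equation*}
\bbE \sup_{N\in\bbN} \lVert \Sigma_N\rVert_\scrX^2 \;\leq\; 4 \sup_{N\in\bbN} \bbE \lVert \Sigma_N\rVert_\scrX^2 \;=\; 4 \sum_{n=0}^\infty \lVert x_n\rVert_\scrX^2 \;<\; \infty,
\end{equation*}
whence $\sup_N \lVert \Sigma_N(\tilde\omega)\rVert_\scrX < \infty$ for $\tilde{\bbP}$-almost all $\tilde\omega$. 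Combined with the weak convergence on the total set $\{e_m\}$ from the previous step, this yields a.s.\ weak convergence of $\Sigma_N$ to $\Sigma_\infty$ in $\scrX$, which is (2).
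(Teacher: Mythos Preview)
Your proof is correct and follows essentially the same architecture as the paper's: apply Kolmogorov's two-series theorem coordinatewise, show the limiting coefficients are square-summable so as to assemble a candidate limit $\Sigma_\infty\in\scrX$, then upgrade convergence on the basis to full weak convergence via a.s.\ norm-boundedness of the partial sums. The only substantive difference is in the boundedness step: the paper invokes L\'evy's maximal inequality together with Markov's inequality to show $\tilde{\bbP}[\sup_N\lVert\Sigma_N\rVert_\scrX>R]\to 0$, whereas you recognize $\{\Sigma_N\}$ as an $\scrX$-valued $L^2$-martingale and apply Doob's maximal inequality to get $\bbE\sup_N\lVert\Sigma_N\rVert_\scrX^2<\infty$ directly. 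Your route is arguably cleaner here, giving $L^2$-control of the maximal function in one stroke; the paper's route has the mild advantage of not requiring any martingale machinery and of using only the symmetry of the summands (L\'evy's inequality holds for arbitrary independent symmetric increments), which aligns with the broader It\^o--Nisio framework discussed elsewhere in the section.
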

In order to prove this, we use the following maximal inequality. 
\begin{lemma}[L\'evy's Maximal Inequality]
	\label{lem:Levy}
	Let $\scrX$ be a separable complex Banach space. 
	Let $x_0,x_1,x_2,\ldots$ be independent symmetric $\scrX$-valued random variables. Then, setting $\smash{\Sigma_N  = \sum_{n=0}^N x_n}$, 
	\begin{equation}
	\bbP[(\exists N_0 \in \{0,\ldots,N\})\lVert \Sigma_{N_0} \rVert_\scrX >  R] \leq 2 \bbP[\lVert \Sigma_N \rVert_{\scrX} > R]
	\end{equation}
	for all $N\in \bbN$ and real $R>0$. In particular, (by countable additivity,) $\bbP[(\exists N_0 \in \bbN) \lVert \Sigma_{N_0} \rVert_\scrX >  R ] \leq 2 \liminf_{N\to \infty} \bbP[\lVert \Sigma_N \rVert_{\scrX} > R]$. 
\end{lemma}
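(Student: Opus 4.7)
My plan is the classical first-passage-time argument, reducing the maximal inequality to a symmetrization (``reflection'') trick enabled by the hypothesis that the $x_n$ are symmetric. I would first introduce the stopping time $\tau = \min\{k \in \{0,\ldots,N\} : \lVert \Sigma_k \rVert_\scrX > R\}$, with the convention $\tau = N+1$ if no such $k$ exists. Then the event on the left-hand side is $\{\tau \leq N\} = \bigsqcup_{k=0}^N \{\tau = k\}$, and each $\{\tau = k\}$ lies in $\sigma(x_0,\ldots,x_k)$, using only that $\lVert \cdot \rVert_\scrX \colon \scrX \to \bbR$ is Borel measurable (automatic from separability of $\scrX$, cf.\ \Cref{lem:bof}).

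For each $k$, I would decompose $\Sigma_N = \Sigma_k + T_k$ with $T_k = \sum_{j=k+1}^N x_j$ (and $T_N = 0$), and introduce the reflected partial sum $\Sigma_k - T_k$. Applying the triangle inequality to the identity $2\Sigma_k = (\Sigma_k + T_k) + (\Sigma_k - T_k)$, one sees that on $\{\tau = k\}$, where $\lVert \Sigma_k \rVert_\scrX > R$, at least one of $\lVert \Sigma_N\rVert_\scrX > R$ and $\lVert \Sigma_k - T_k\rVert_\scrX > R$ must hold.

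The crux is then the symmetrization step: since $T_k$ is independent of $\sigma(x_0,\ldots,x_k)$ and is symmetric (being a sum of independent symmetric summands), the joint laws of $(\Sigma_k, T_k, \mathbf{1}_{\{\tau = k\}})$ and $(\Sigma_k, -T_k, \mathbf{1}_{\{\tau = k\}})$ coincide; this forces
\[ \bbP[\tau = k,\; \lVert \Sigma_k + T_k\rVert_\scrX > R] = \bbP[\tau = k,\; \lVert \Sigma_k - T_k\rVert_\scrX > R], \]
and combining with the dichotomy above yields $\bbP[\tau = k] \leq 2\bbP[\tau = k,\; \lVert \Sigma_N \rVert_\scrX > R]$. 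Summing over $k \in \{0,\ldots,N\}$ and using disjointness gives the main inequality, while the ``in particular'' clause then follows from continuity from below of $\bbP$ along the family of events indexed by $N$, together with the elementary fact that $a_N \leq b_N$ and $a_N \to a$ imply $a \leq \liminf b_N$. I expect no real obstacle; the whole argument is elementary, and the only points requiring any care are the measurability claims (automatic by separability) and the invariance of the equidistribution $(\Sigma_k, T_k)\sim (\Sigma_k,-T_k)$ under conditioning on the $\sigma(x_0,\ldots,x_k)$-measurable event $\{\tau = k\}$, which is immediate from the independence of $T_k$ from that $\sigma$-algebra.
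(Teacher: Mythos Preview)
Your argument is correct and is the standard reflection/first-passage proof of L\'evy's maximal inequality. Note that the paper does not actually supply a proof of this lemma: it simply refers the reader to \cite[Proposition~6.1.12]{Hytonen2016analysis}, whose proof is essentially the one you have written out.
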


See \cite[Proposition 6.1.12]{Hytonen2016analysis} for a proof. We will use the fact observed above that a bounded sequence in a Banach space converges weakly if and only if it converges $S$-weakly for a dense set of functionals $S\subseteq \scrX^*$. 

\begin{proof}[Proof of \Cref{lem:1->2}]
	Set $r_n=\langle x^*, \tilde{\gamma}_n(-) x_n \rangle_\scrX$ for an arbitrary unit  vector $x^* \in \scrX$. Clearly, $\bbE r_n = 0$, and by Cauchy-Schwarz we have an upper bound
	\begin{equation}
	\operatorname{Var}(r_n)  = \bbE [|\langle x^*, \tilde{\gamma}_n(-) x_n \rangle_\scrX|^2] \leq \lVert x_n \rVert_\scrX^2  \bbE [\tilde{\gamma}_n^2] = \lVert x_n \rVert_{\scrX}^2. 
	\end{equation}
	Assumption (1) implies the remaining hypothesis of the two-series theorem, and therefore that
	\begin{equation}
	\sum_{n=0}^N \langle x^* , \tilde{\gamma}_n(-) x_n \rangle_\scrX = \Big\langle x^* ,
	\sum_{n=0}^N \tilde{\gamma}_n(-) x_n \Big\rangle_\scrX 
	\label{eq:z0}
	\end{equation}
	converges as $N\to\infty$ almost surely. Let $\Sigma_N = \Sigma_{\infty,N} = \sum_{n=0}^N \tilde{\gamma}_n x_n$, let $\{x^{*}_m\}_{m=1}^\infty$ denote an orthonormal basis of $\scrX$, and let $S \subseteq \scrX$ denote the set consisting of all linear combinations of finitely many of the $x^*_m$'s with coefficients in $\bbQ(i)$. $S$ is dense in $\scrX$ and countable. 
	Almost surely, the preceding convergence holds for all $x^* \in S$ simultaneously. 
	In particular, there exist $\bbC$-valued random variables $a_1,a_2 ,a_3,\ldots : \tilde{\Omega}\to \bbC$ such that 
	\begin{equation}
	\sum_{n=0}^\infty  \langle x^{*}_m ,\tilde{\gamma}_n(-)
	 x_n \rangle_\scrX = a_m \text{ for all $m$}
	\label{eq:am}
	\end{equation}
	almost surely. Consider the random variable $\Sigma_{M,\infty}:\tilde{\Omega}\to \scrX$ defined by
	\begin{equation}
	\Sigma_{M,\infty} = \sum_{m=1}^M a_m x^{*}_m .
	\end{equation}
	For fixed $\tilde{\omega}\in \tilde{\Omega}$, $\lim_{M\to\infty} \Sigma_{M,\infty}(\tilde{\omega})$ exists as a well-defined element of $\scrX$ if and only if $\sum_{m=1}^\infty |a_m(\tilde{\omega})|^2$ is finite. 
	
	The infinite series $ \sum_{m=1}^\infty |a_m|^2$ is a well-defined $[0,\infty]$-valued random variable, and so will be almost surely finite if  it has finite expectation value. In this case, we conclude that 
	\begin{equation} \lim_{M\to\infty} \Sigma_{M,\infty} = \Sigma_{\infty,\infty}  \in \scrX
	\label{eq:kkk}
	\end{equation}
	exists and satisfies $\sum_{m=1}^\infty |a_m|^2 = \lVert \Sigma_{\infty,\infty} \rVert_{\scrX}^2$ almost surely. 
	We check that $\bbE  \sum_{m=1}^\infty |a_m|^2 < \infty$. The monotone convergence theorem implies that 
	\begin{equation}
	\bbE \sum_{m=1}^\infty |a_m|^2 = \lim_{M\to \infty} \int_{\tilde{\Omega}} \sum_{m=1}^M |a_m|^2 \dd \tilde{\bbP} =  \sum_{m=1}^\infty \int_{\tilde{\Omega}}  |a_m|^2 \dd\tilde{\bbP} =\sum_{m=1}^\infty \bbE [|a_m|^2]. 
	\label{eq:l9}
	\end{equation}
	Applying Fatou's lemma,
	\begin{align}
	\sum_{m=1}^\infty \bbE [|a_m|^2] &\leq \sum_{m=1}^\infty \liminf_{N\to\infty} \int_{\tilde{\Omega}}  \Big|\Big\langle x^*_m ,
	\sum_{n=0}^N \tilde{\gamma}_n(\tilde{\omega}) x_n\Big\rangle_\scrX \Big|^2 \dd \tilde{\bbP} (\tilde{\omega}) \\ &=  \sum_{m=1}^\infty \liminf_{N\to\infty} \int_{\tilde{\Omega}}  \Big|
	\sum_{n=0}^N \tilde{\gamma}_n(\tilde{\omega}) \langle x^*_m ,  x_n\rangle_\scrX \Big|^2 \dd \tilde{\bbP} (\tilde{\omega}). 
	\label{eq:zz}
	\end{align}
	Because $\tilde{\gamma}_0,\tilde{\gamma}_1,\tilde{\gamma}_2,\ldots$ are independent and have mean zero, we can calculate the right-hand side of \cref{eq:zz}. The result is the bound 
	\begin{equation}
	\bbE \sum_{m=1}^\infty |a_m|^2 \leq  \sum_{m=1}^\infty \liminf_{N\to\infty} 
	\sum_{n=0}^N | \langle x^*_m ,  x_n\rangle_\scrX|^2 = \sum_{n=0}^\infty \sum_{m=1}^\infty |\langle x^*_m , x_n \rangle_\scrX|^2  = \sum_{n=0}^\infty \lVert x_n \rVert_\scrX^2. 
	\label{eq:39}
	\end{equation}
	By our assumption (1), the right-hand side of \cref{eq:39} is finite. We conclude that $\Sigma=\Sigma_{\infty,\infty}$, \cref{eq:kkk} is a well-defined element of $\scrX$. 
	
	We already know that $\Sigma_{\infty,N}(\tilde{\omega})$ converges to $\Sigma(\tilde{\omega})$ $S$-weakly for $\tilde{\bbP}$-almost all $\tilde{\omega}\in \tilde{\Omega}$, and since $S$ is dense in $\scrX$, in order to prove that 
	\begin{equation} 
		\Sigma_N(\tilde{\omega})=\Sigma_{\infty,N}(\tilde{\omega})\to \Sigma(\tilde{\omega})
	\end{equation} 
	weakly for $\tilde{\bbP}$-almost all $\tilde{\omega}\in \tilde{\Omega}$, it suffices to prove that $\{\Sigma_{N}(\tilde{\omega})\}_{N=0}^\infty$ is bounded in $\scrX$ for $\tilde{\bbP}$-almost all $\tilde{\omega}\in \tilde{\Omega}$.  In other words, we want to show that 
	\begin{equation}
	\tilde{\bbP}[(\forall R>0) (\exists N \in \bbN) \lVert \Sigma_N \rVert_\scrX > R] = 0.
	\end{equation}
	Since $\{\tilde{\omega}\in \tilde{\Omega} : (\forall R>0) (\exists N \in \bbN) \lVert \Sigma_N \rVert_\scrX > R\} = \cap_{R>0} \{\tilde{\omega}\in \tilde{\Omega} : (\exists N \in \bbN) \lVert \Sigma_N \rVert_\scrX > R\}$ and 
	\begin{equation} 
		\{\tilde{\omega}\in \tilde{\Omega}:(\exists N \in \bbN) \lVert \Sigma_N \rVert_\scrX > R\} = \cup_{N>0} \{\tilde{\omega}\in \tilde{\Omega}:(\exists N_0 \in \{0,\ldots,N\}) \lVert \Sigma_{N_0} \rVert_\scrX > R\}
	\end{equation} 	 
	 (so that by L\'evy's maximal inequality $\bbP[(\exists N \in \bbN) \lVert \Sigma_N \rVert_\scrX > R] \leq 2 \liminf_{N\to\infty} \tilde{\bbP}[\lVert \Sigma_N \rVert_\scrX>R]$ for all $R>0$), it suffices to prove that 
	\begin{equation}
	 \liminf_{N\to\infty} \tilde{\bbP}[\lVert \Sigma_N \rVert_{\scrX} > R] \to 0. 
	 \label{eq:j}
	\end{equation}
	as $R\to \infty$. By Markov's inequality, $ \tilde{\bbP}[\lVert \Sigma_N \rVert_{\scrX} > R] =  \tilde{\bbP}[\lVert \Sigma_N \rVert_{\scrX}^2 > R^2] \leq (1/R^2) \bbE [ \lVert \Sigma_N \rVert_{\scrX}^2]$ for $R>0$.   But $\sup_N \bbE [\lVert \Sigma_N \rVert_{\scrX}^2] = \sum_{n=0}^\infty \lVert x_n \rVert_{\scrX}^2 < \infty$ by assumption,  and so \cref{eq:j} holds. 
\end{proof}

We can upgrade the preceding proof of (1) $\Rightarrow$ (2) to a proof of (1) $\Rightarrow (3)$, i.e.\ to conclude almost sure strong convergence, bypassing the It\^o-Nisio theorem. This argument uses a trick of Schwartz, 
based on the following deterministic functional analytic lemma:

\begin{lemma}
	\label{lem:fa}
	Given any Hilbert-Schmidt map $L:\scrY\to \scrX$ of separable Hilbert spaces, there exists a separable Hilbert space $\scrX_0$, compact $i:\scrX_0\to \scrX$, and Hilbert-Schmidt $L_0:\scrY\to \scrX_0$ such that $L = i\circ L_0$. 
\end{lemma}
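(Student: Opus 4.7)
The plan is to exploit the singular value decomposition (SVD) of $L$ and redistribute the singular values between two factors. Since $L:\scrY\to\scrX$ is Hilbert-Schmidt, the spectral theorem applied to $L^*L$ yields an SVD
\[
L = \sum_{n=0}^\infty s_n \langle e_n,\cdot\rangle_\scrY f_n,
\]
where $\{e_n\}\subset \scrY$ and $\{f_n\}\subset \scrX$ are orthonormal, $s_n\geq 0$, and $\sum_n s_n^2 = \lVert L\rVert_{\mathrm{HS}}^2<\infty$. The first step is to notice that the square-summability of $\{s_n\}$ leaves room to multiply by a divergent weight and still keep square-summability.

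Accordingly, I would choose auxiliary weights $t_n\geq 1$ with $t_n\to\infty$ satisfying $\sum_n s_n^2 t_n^2<\infty$. Such a sequence exists by a standard block construction: pick integers $N_1<N_2<\cdots$ with $\sum_{n\geq N_k} s_n^2 < 2^{-k}$ and set $t_n = 2^{k/2}$ for $N_k\leq n<N_{k+1}$, so that $\sum_n s_n^2 t_n^2 \leq \sum_k 2^k\cdot 2^{-k}<\infty$. Now define $\scrX_0$ to be any separable Hilbert space with a chosen orthonormal basis $\{g_n\}_{n=0}^\infty$, and define
\[
L_0: \scrY \to \scrX_0, \qquad L_0 y = \sum_{n=0}^\infty s_n t_n \langle e_n, y\rangle_\scrY g_n,
\]
\[
i:\scrX_0\to\scrX, \qquad i\Bigl(\sum_{n=0}^\infty a_n g_n\Bigr) = \sum_{n=0}^\infty t_n^{-1} a_n f_n.
\]
By construction, $\lVert L_0\rVert_{\mathrm{HS}}^2 = \sum_n s_n^2 t_n^2 <\infty$, so $L_0$ is HS. Also $\lVert i\rVert_{\mathrm{op}}\leq \sup_n t_n^{-1}\leq 1$, so $i$ is bounded, and compactness follows because the finite-rank truncations $i_N$ (sending $g_n\mapsto t_n^{-1}f_n$ for $n\leq N$ and to $0$ otherwise) satisfy $\lVert i - i_N\rVert_{\mathrm{op}}\leq \sup_{n>N} t_n^{-1}\to 0$.

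Finally, I would verify the factorization by a direct computation: for any $y\in\scrY$,
\[
(i\circ L_0)(y) = i\Bigl(\sum_n s_n t_n\langle e_n,y\rangle_\scrY g_n\Bigr) = \sum_n s_n t_n\cdot t_n^{-1}\langle e_n,y\rangle_\scrY f_n = Ly.
\]
The only genuinely substantive step is the choice of the weight sequence $\{t_n\}$: the problem would be trivial if $L$ were already trace class (just take $t_n = s_n^{-1/2}$), but in the generic HS case one must exploit that $\ell^2$ embeds into a weighted $\ell^2$ of the form $\{(a_n): \sum t_n^2 a_n^2<\infty\}$ for some divergent $\{t_n\}$, which is the elementary fact underlying the construction above. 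Everything else is bookkeeping.
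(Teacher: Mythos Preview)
Your argument is essentially the one the paper points to: the paper does not give its own proof but cites Schwartz's treatise, noting the result follows ``from a theorem of du Bois-Reymond,'' which is precisely the fact you isolate as the substantive step---that any convergent series $\sum s_n^2$ of nonnegative terms admits divergent weights $t_n\to\infty$ with $\sum s_n^2 t_n^2<\infty$. The SVD-based factorization you build on top of this is the standard packaging.

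One slip to fix: in your block construction you claim $\sum_n s_n^2 t_n^2 \leq \sum_k 2^k\cdot 2^{-k}<\infty$, but $\sum_k 2^k\cdot 2^{-k}=\sum_k 1=\infty$. The repair is immediate---for instance choose $N_k$ with $\sum_{n\geq N_k} s_n^2 < 4^{-k}$ (so the bound becomes $\sum_k 2^k\cdot 4^{-k}=\sum_k 2^{-k}<\infty$), or keep the tails bounded by $2^{-k}$ and instead set $t_n=k^{1/2}$ on the $k$th block. You should also state explicitly that $t_n=1$ for $n<N_1$; this is implicit in ``$t_n\geq 1$'' but not written. With these cosmetic corrections the proof is complete and matches the approach the paper references.
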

See \cite[Part II-Chp.\ III, Proposition 6]{SchwartzRadon} for a proof (from a theorem of duBois-Reymond).

\begin{proof}[Proof of (1) $\Rightarrow$ (3) from (1) $\Rightarrow$ (2) and \Cref{lem:fa}]
	Suppose that (1) holds. 
	
	Then we can define a HS map $L:\ell^2(\bbN)\to \scrX$ such that $L{\bmdelta}_n= x_n$. 
Applying \Cref{lem:fa} with 	$\scrY= \ell^2(\bbN)$, we get $\scrX_0$, compact $i:\scrX_0\to \scrX$, and HS $L_0:\ell^2(\bbN)\to \scrX_0$ with $L=i\circ L_0$. Because $L_0$ is HS, 
\begin{equation}
\sum_{n=0}^\infty \lVert L_0 \bmdelta_n \rVert_{\scrX_0}^2 < \infty. 
\end{equation}
We can therefore apply the implication (1) $\Rightarrow$ (2) with $\scrX_0$ in place of $\scrX$ and $L_0 \bmdelta_n$ in place of $x_n$. This tells us that 
\begin{equation}
\sum_{n=0}^N \tilde{\gamma}_n(\tilde{\omega}) L_0 \bmdelta_n \text{ converges weakly in $\scrX_0$ as $N\to\infty$}
\end{equation}
for $\tilde{\bbP}$-almost all $\tilde{\omega}\in\tilde{\Omega}$. 
Compact maps between Banach spaces map weakly convergent sequences to norm convergent sequences \cite[Theorem 3.1.9]{Simon2015operator}, so we conclude that 
\begin{equation}
	i\Big(\sum_{n=0}^N \tilde{\gamma}_n(\tilde{\omega}) L_0 \bmdelta_n \Big) = \sum_{n=0}^N \tilde{\gamma}_n(\tilde{\omega}) x_n \text{ converges strongly in $\scrX$ as $N\to\infty$}
\end{equation}
for those same $\tilde{\omega}$. 
Therefore (3) holds. 
\end{proof}

We now have a second complete proof of \Cref{prop:Sazonov_equiv}. 
We finally turn to using Sazonov's theorem (really its converse), which we take to be a classification of the covariance operators of Gaussian measures on a separable Hilbert space. (It is not difficult to translate this perspective into one regarding cylinder-set measures, and the interested reader is referred to \cite{SchwartzRadon}. Another account in terms of $\gamma$-radonifying operators is given in \cite{vanNeerven2010}.)

If $\mu$ is a Borel probability measure on a real separable Banach space $\scrY$ and has finite second moment, then its \emph{covariance matrix} is the sesquilinear form $C:\scrY^*\times \scrY^*\to \bbC$ given by 
\begin{equation}
C(\Lambda_1,\Lambda_2) = \int_{\scrY} \Lambda_1(y)^* \Lambda_2(y)\dd \mu(y). 
\end{equation}
(This is well-defined by the assumption on the second moment and Cauchy-Schwarz.)
The map $C$ is jointly continuous due to the inner regularity of $\mu$. 
When $\scrY$ is a real Hilbert space, we identify $\scrY^*$  with $\scrY$ in the definition above. When $\scrY$ is the real Hilbert space underlying our complex separable Hilbert space $\scrX$, $C$ defines via joint continuity or via the Hellinger-Toeplitz theorem \cite[\S3.5]{RS} a self-adjoint bounded $\bbC$-linear map $Q:\scrX\to \scrX$ satisfying $C(x,y) = \langle Qx , y \rangle_\scrX$. This is the \textit{covariance operator}. Note that $Q$ is positive semidefinite and self-adjoint.

One formulation of the converse of Sazonov's theorem says that a bounded positive semidefinite self-adjoint operator on $\scrX$ is a covariance matrix of some Gaussian measure on $\scrX$ if and only if it is of trace class.

\begin{proof}[Proof of (4) $\Rightarrow$ (1) using Sazonov's theorem]
	As above, let $\Sigma : \tilde{\Omega}\to \scrX$ be the random variable to which the partial series $\smash{\Sigma_N = \sum_{n=0}^N \tilde{\gamma}_n x_n}$, \cref{eq:sn}, converge weakly in $\scrX$ in probability. Also as above, $\Sigma$ is Gaussian. 
	The covariance operator of the law of $\Sigma$ is given, via (4), by 
	\begin{equation}
	C(x,y) = \bbE [ \langle x, \Sigma \rangle_\scrX \langle \Sigma,y \rangle_\scrX] = \lim_{N\to\infty}  \bbE [ \langle x, \Sigma_N \rangle_\scrX \langle  \Sigma_N,y \rangle_\scrX] 
	\end{equation}
	for $x,y \in \scrX$. We can directly compute 
	\begin{equation}
	 \bbE [ \langle x, \Sigma_N \rangle_\scrX \langle  \Sigma_N,y \rangle_\scrX] = \sum_{n=0}^N \langle x , x_n \rangle_\scrX \langle x_n ,y \rangle_\scrX,
	 \label{eq:misc_010}
	\end{equation}
	so $C(x,y) = \sum_{n=0}^\infty \langle x,x_n \rangle_\scrX \langle x_n,y\rangle_\scrX$. Consequently, 
	\begin{equation} 
	 \langle x,Q y\rangle_\scrX = C(x,y) = \lim_{N\to\infty}\Big\langle x,  \sum_{n=0}^N x_n \langle x_n,y \rangle_\scrX \Big\rangle_\scrX.
	\end{equation} 
	Since $x$ was arbitrary, we conclude that $\smash{\sum_{n=0}^N \langle x_n,y \rangle_\scrX x_n \to Qy}$ weakly as $N\to\infty$.
	
	Fixing an arbitrary orthonormal basis $\{x_m^*\}_{m=1}^\infty$ of $\scrX$, the trace of $Q$ is  given by 
	\begin{align} 
	\begin{split} 
	\operatorname{Tr}Q = \sum_{m=1}^\infty \langle x_m^* ,Q x_m^* \rangle_\scrX  &=\sum_{m=1}^\infty \lim_{N\to\infty}\Big\langle x_m^* , \sum_{n=0}^N x_n \langle x_n,x_m^* \rangle_\scrX  \Big\rangle_\scrX    \\
	&=\sum_{m=1}^\infty \sum_{n=0}^\infty |\langle x_m^* ,x_n \rangle_\scrX|^2 \\ &= \sum_{n=0}^\infty \sum_{m=1}^\infty  |\langle x_m^* ,x_n \rangle_\scrX|^2= \sum_{n=0}^\infty \lVert x_n \rVert_{\scrX}^2. 
	\end{split}
	\label{eq:Sazonov_final}
	\end{align} 
	By Sazonov's theorem, $Q$ is of trace class, and so we conclude (1). 
\end{proof}

This completes our discussion of \Cref{prop:Sazonov_equiv}.

\section{Proof of main result}
\label{sec:main_theorem}

Recall the definition of the subspace $\Psi^s_{\mathrm{cl}}(M) \subseteq \Psi^s(M)$ of $s$th ($s\in \bbR$) order Kohn-Nirenberg classical pseudodifferential operators on $M$. This is not strictly necessary for our argument, but it does simplify matters somewhat. 
These are the pseudodifferential operators on $M$ whose Schwartz kernels are smooth except (possibly) at the diagonal, where they are in local coordinates given by the Schwartz kernels of elements of 
\begin{equation} 
\Psi^s_{\infty,\mathrm{cl}}(\bbR^d) = \{\text{$s$th order classical Kohn-Nirenberg $\Psi$DOs on $\bbR^d$}\}.
\end{equation}  
See \cite{Vasy18}\cite{HintzNotes}. 
Consider the zeroth order component of the principal symbol short exact sequence. If we restrict attention to classical zeroth order operators, then we are left with a short exact sequence 
\begin{equation}
0 \to \Psi^{-1}_{\mathrm{cl}}(M)  \hookrightarrow \Psi^0_{\mathrm{cl}}(M) \overset{\sigma^0}{\longrightarrow} S^{[0]}_{\mathrm{cl}}(T^* M) \to 0. 
\label{eq:8g3f}
\end{equation}
The quotient $S^{[0]}_{\mathrm{cl}}(T^* M) = S^0_{\mathrm{cl}}(T^* M)/ S^{-1}_{\mathrm{cl}}(T^* M)$ is naturally identifiable with the space $C^\infty(\bbS^*M)$ of smooth complex-valued functions on the cosphere bundle over $M$. 
We will choose an arbitrary (and noncanonical) right-inverse $\operatorname{Op}: C^\infty(\bbS^*M) \hookrightarrow \Psi^0_{\mathrm{cl}}(M)$ of $\sigma^0$ in \cref{eq:8g3f}, so that $(\sigma^0 \circ \operatorname{Op})(\chi) = \chi$ for all $\chi \in C^\infty(\bbS^*M)$ and with the additional property that if $\chi$ vanishes identically on a given open subset $U\subseteq \bbS^* M$ then 
\begin{equation} 
U \cap \operatorname{WF}'(\operatorname{Op}(\chi)) = \varnothing. 
\label{eq:i9}
\end{equation} 
(Clearly, at least one such ``quantization'' exists.) The characteristic and elliptic sets of $\operatorname{Op}(\chi)$ are then given by 
\begin{equation}
\operatorname{Char}^s(\operatorname{Op}(\chi)) = \chi^{-1}(\{0\}) \quad \text{ and } \quad  \operatorname{Ell}^s(\operatorname{Op}(\chi)) = \chi^{-1}(\bbC\backslash \{0\})
\end{equation}
respectively. Also, by \cref{eq:i9}, the essential support of $\operatorname{Op}(\chi)$ is given by $\operatorname{WF}'(\operatorname{Op}(\chi)) = \operatorname{supp} \chi$. 

We will consider, for $\ell_1,\ell_2\in \bbR$, the composition
\begin{equation} 
\Psi^s_{\mathrm{cl}}(M)\ni  (1+\triangle_g)^{\ell_1}\circ  \operatorname{Op}(\chi)\circ (1+\triangle_g)^{\ell_2} : \scrD'(M)\to \scrD'(M),
\label{eq:k90}
\end{equation} 
which restricts to a bounded linear operator on $\scrH^\sigma(M,g)$ for $\ell_1+\ell_2\leq 0$ and arbitrary $\sigma\in \bbR$.  
The $\scrH^s$-wavefront set of $u\in \scrD'(M)$, as defined by \cref{eq:wf}, is given in terms of the elements of $\Psi^0_{\mathrm{cl}}(M)$ by 
\begin{align}
\operatorname{WF}^s(u) &= \bigcap \{ \operatorname{Char}^0(\operatorname{Op}(\chi)) : \chi \in C^\infty(\bbS^* M), \operatorname{Op}(\chi) u \in \scrH^s(M) \} 
\label{eq:wavefront_definition_one}
\\
&= \bigcap \{ \operatorname{Char}^0(\operatorname{Op}(\chi)) : \chi \in C^\infty(\bbS^* M), (1+\triangle_g)^{s/2}\operatorname{Op}(\chi) u \in \scrL^2(M) \} \label{eq:wavefront_definition_two}\\
&= \bigcap \{ \operatorname{Char}^0(\operatorname{Op}(\chi)) : \chi \in C^\infty(\bbS^* M), \operatorname{Op}(\chi) (1+\triangle_g)^{s/2} u \in \scrL^2(M) \}.
\label{eq:wavefront_definition_three}
\end{align}
(The equivalence of \cref{eq:wavefront_definition_one} with \cref{eq:wf} is a consequence of the abundance of elliptic classical symbols and elliptic regularity. In particular, given any point in the cosphere bundle and a neighborhood of it, there is a 0th order classical symbol which is elliptic at the point and essentially supported within the neighborhood.
The equality of the two sets on the right-hand sides of \cref{eq:wavefront_definition_two} and \cref{eq:wavefront_definition_three} is a consequence of the microlocality and ellipticity of $(1+\triangle_g)^{s/2}$.) Substituting in 
\begin{equation} 
\operatorname{Char}^0(\operatorname{Op}(\chi)) = \chi^{-1}(\{0\})
\end{equation} 
to \cref{eq:wavefront_definition_three}, 
\begin{equation}
\operatorname{WF}^s(u) = \bbS^*M \backslash \bigcup\{ \chi^{-1}(\bbC\backslash \{0\}) : \chi \in C^\infty(\bbS^*M) , \operatorname{Op}(\chi) (1+\triangle_g)^{s/2} u \in \scrL^2(M)\}.
\label{eq:lk}
\end{equation}
We use the characterization \cref{eq:lk} with $u = \Phi(\omega)$, $\omega \in \Omega$ fixed. 

In order to apply \Cref{Sazonov}, we need to determine when $\operatorname{Op}(\chi) (1+\triangle_g)^{s/2}$ is a Hilbert-Schmidt operator on the various Sobolev spaces. Since the compositions of Hilbert-Schmidt operators with bounded operators are Hilbert-Schmidt, 
\begin{multline}
\operatorname{Op}(\chi) (1+\triangle_g)^{s/2} : \scrH^\sigma(M,g)\to \scrH^\sigma(M,g)\text{ is HS } \\ \iff (1+\triangle_g)^{\sigma/2} \operatorname{Op}(\chi) (1+\triangle_g)^{(s-\sigma)/2} : \scrL^2(M,g)\to \scrL^2(M,g)\text{ is HS.}
\end{multline}
It therefore suffices, if we consider the two parameter family \cref{eq:k90} of operators  $(1+\triangle_g)^{\ell_1}\operatorname{Op}(\chi) (1+\triangle_g)^{\ell_2}$ for $\ell_1,\ell_2 \in \bbR$, to restrict attention to $\scrL^2(M,g)$ among all the $L^2$-based Sobolev spaces. 

We make use of the following general characterization of Hilbert-Schmidt operators on $\scrL^2(M,g)$: a bounded linear operator $L:\scrL^2(M,g)\to \scrL^2(M,g)$ is Hilbert-Schmidt if and only if its Schwartz kernel $K_L \in \scrD'(M\times M)$ satisfies 
\begin{equation}
	K_L \in \scrL^2(M\times M,g\times g),
\end{equation}
where $g\times g$ is the product metric on $M\times M$. 
Indeed, the Hilbert-Schmidt norm $\lVert L \rVert_{\mathrm{HS}} \in [0,\infty]$ is given by 
\begin{equation}
\lVert L \rVert_{\mathrm{HS}}^2 = \sum_{n=0}^\infty \lVert L \phi_n \rVert_{\scrL^2(M,g)}^2 = \sum_{n,m=0}^\infty |\langle \phi_m ,L \phi_n \rangle_{\scrL^2(M,g)}|^2 = \lVert K_L \rVert_{\scrL^2(M\times M,g\times g)}^2.
\end{equation}
A well-known corollary of this elementary observation and the Parseval-Plancherel theorem (applied locally) is a characterization of the somewhere elliptic Kohn-Nirenberg $\Psi$DOs  which act as Hilbert-Schmidt operators on $\scrL^2(M,g)$. 
\begin{proposition} 
\label{HS_criterion} 	
If $s<-d/2$, any Kohn-Nirenberg pseudodifferential operator $L\in \Psi^s(M)$ restricts to a Hilbert-Schmidt operator on $\scrL^2(M,g)$. 
Conversely, if  $\operatorname{Ell}^s(L) \subseteq \bbS^*M$ is nonempty, then  $s<-d/2$ is necessary. 
\end{proposition}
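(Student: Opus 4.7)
The approach hinges entirely on the observation, already recorded before the proposition, that a bounded operator on $\scrL^2(M,g)$ is Hilbert-Schmidt if and only if its Schwartz kernel lies in $\scrL^2(M\times M, g\times g)$, combined with the local Parseval-Plancherel identity computing the $L^2$-norm of the kernel in terms of the full symbol.

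For the forward direction, suppose $L\in \Psi^s(M)$ with $s<-d/2$. Using a finite partition of unity $\{\psi_j\}$ subordinate to a finite cover of $M$ by coordinate charts and the fact that operators whose Schwartz kernels are smooth are Hilbert-Schmidt on any compact manifold, it suffices to bound $\lVert K_{M_{\psi_j} L M_{\psi_k}} \rVert_{\scrL^2}$ for each pair $(j,k)$. The off-diagonal pieces have smooth kernels and cause no trouble, so only the diagonal case $j=k$ remains. In local coordinates on a chart $U\subseteq \bbR^d$, the operator $M_{\psi_j}LM_{\psi_j}$ is (up to an $\Psi^{-\infty}$-error) a Kohn-Nirenberg operator with full symbol $c(x,\xi)$ compactly supported in $x$ and satisfying $|\partial_x^\alpha \partial_\xi^\beta c(x,\xi)| \leq C_{\alpha,\beta}\langle \xi \rangle^{s-|\beta|}$. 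Its kernel is $K(x,y) = (2\pi)^{-d} \int e^{i(x-y)\cdot\xi} c(x,\xi)\,\mathrm{d}\xi$, so applying Parseval-Plancherel in the $\xi\leftrightarrow z=y-x$ variables yields
\begin{equation}
\lVert K \rVert_{\scrL^2(\bbR^{2d})}^2 = (2\pi)^{-d}\int_{\bbR^d}\int_{\bbR^d} |c(x,\xi)|^2 \,\mathrm{d}\xi\,\mathrm{d}x.
\end{equation}
The hypothesis $s<-d/2$ gives $\int_{\bbR^d} \langle \xi \rangle^{2s}\,\mathrm{d}\xi <\infty$, so the right-hand side is finite and we conclude.

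For the converse, suppose $L\in \Psi^s(M)$ is Hilbert-Schmidt and elliptic at some $(x_0,\xi_0)\in \bbS^* M$. Choose a coordinate chart $U$ centered at $x_0$ on which $L$ remains elliptic throughout some small conic neighborhood $\Gamma\subset T^*U\setminus 0$ of the ray through $(x_0,\xi_0)$; concretely, $|\varsigma^s(L)(x,\xi)|\geq c_0 \langle \xi \rangle^s$ for $(x,\xi)\in \Gamma$ with $|\xi|\geq 1$. Pick $b \in C^\infty(\bbS^* M)$ with $b(x_0,\hat{\xi}_0)\neq 0$ and $\operatorname{supp} b$ contained in the image of $\Gamma$ in the cosphere bundle, and set $B=\operatorname{Op}(b)\in \Psi_{\mathrm{cl}}^0(M)$. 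Let $\psi\in \scrC^\infty_{\mathrm{c}}(U)$ satisfy $\psi(x_0)=1$. Then $T = M_\psi B L M_\psi$ is Hilbert-Schmidt since Hilbert-Schmidt operators form a two-sided ideal among bounded operators and $M_\psi$, $B$ are bounded on $\scrL^2(M,g)$. In local coordinates on $U$, $T$ is a compactly supported-in-$x$ Kohn-Nirenberg $\Psi$DO of order $s$ whose full symbol $\tilde{c}(x,\xi)$ has $|\tilde{c}(x,\xi)|\geq c_1 \langle \xi \rangle^s$ on some slightly smaller conic neighborhood $\Gamma_0$ of $(x_0,\xi_0)$ with $x$ near $x_0$ (by the principal symbol calculation $\sigma^s(T)(x,\xi) = \psi(x)^2 b(x,\xi) \sigma^s(L)(x,\xi)$ and shrinking $\Gamma$ to absorb the lower-order remainders in a standard way).

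Applying Parseval-Plancherel to the kernel of $T$ exactly as above produces
\begin{equation}
\infty > \lVert K_T \rVert_{\scrL^2}^2 = (2\pi)^{-d}\int\int |\tilde{c}(x,\xi)|^2\,\mathrm{d}\xi\,\mathrm{d}x \geq (2\pi)^{-d} c_1^2 \int_{(x,\xi)\in \Gamma_0,\,|\xi|\geq 1}\langle \xi\rangle^{2s}\,\mathrm{d}\xi\,\mathrm{d}x,
\end{equation}
and the right-hand integral is (up to a positive constant) $\int_1^\infty r^{2s+d-1}\,\mathrm{d}r$, which is finite if and only if $s<-d/2$. The main obstacle here is just the bookkeeping required to justify the symbol lower bound $|\tilde{c}|\geq c_1\langle \xi\rangle^s$ on $\Gamma_0$, i.e.\ that the lower-order remainders arising from composition $M_\psi B L M_\psi$ do not spoil the ellipticity we inherit from $L$ in a shrunken conic neighborhood; this is a routine feature of the symbol calculus and we invoke it without further comment.
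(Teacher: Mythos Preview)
Your proposal is correct and follows essentially the same approach as the paper: reduce to local coordinates via a partition of unity, invoke Parseval--Plancherel to identify $\lVert K\rVert_{\scrL^2}$ with $\lVert a\rVert_{\scrL^2}$, and then use the symbol estimate (upper bound for sufficiency, lower bound from ellipticity for necessity). The only minor difference is that in the converse you introduce an auxiliary microlocal cutoff $B=\operatorname{Op}(b)$ before localizing in space; the paper proceeds more directly by noting that ellipticity at $(x_0,\xi_0)$ already furnishes the pointwise lower bound $|a(x,\xi)|\geq c\langle\xi\rangle^s$ on an open-set-times-cone, and the divergence of $\int\langle\xi\rangle^{2s}\,\mathrm{d}\xi$ over that region (for $s\geq -d/2$) is enough to force $\lVert a\rVert_{\scrL^2}=\infty$ without any further cutting.
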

Since the proof below only uses the leading order behavior of symbols, this proposition applies to more general pseudodifferential operators than the Kohn-Nirenberg $\Psi$DOs we consider here \cite[\S 27]{Shubin}. 
\begin{proof}
	We first consider $L\in \Psi_{\mathrm{c}}^s(V)\subseteq  \Psi_\infty^s(\bbR^d)$ for open $V\subseteq \bbR^d$ --- here $\Psi^s_{\mathrm{c}}(V)$ consists of the $s$th order Kohn-Nirenberg $\Psi$DOs whose Schwartz kernels are compactly supported in $V\times V$. We can therefore write the Schwartz kernel $K_L \in \scrS'(\bbR^d\times \bbR^d)$ of such an operator $L$ as an oscillatory integral 
	\begin{equation}
	K_L(x,y) = \frac{1}{(2\pi)^{d/2}} \int_{\bbR^d} e^{-i(x-y)\cdot \xi} a(x,\xi)  \dd^d \xi  
	\end{equation}
	for some $a(x,\xi)\in S^s(\bbR^d_x\times \bbR^d_\xi)$ which for some $L$-dependent $R$ vanishes if $|x|\geq R$. The Parseval-Plancherel theorem and Fubini's theorem together imply that $
	\lVert K_L \rVert_{\scrL^2(\bbR^{2d})} = \lVert a \rVert_{\scrL^2(\bbR^{2d})}$. 
	If $s<-d/2$, then $a$ satisfies \cref{eq:j9} with $-d/2-\varepsilon$ in place of $s$ for all sufficiently small $\varepsilon>0$. In particular
	\begin{equation} 
	C\overset{\mathrm{def}}=\sup_{x ,\xi \in \bbR^d} |\langle \xi \rangle^{+d/2+\varepsilon} a(x,\xi)| < \infty. 
	\end{equation} 
	The $L^2$-norm of $a$ is then bounded above by 
	\begin{equation}
	\lVert a \rVert_{\scrL^2(\bbR^{2d})} \leq  C\Big( R^d \operatorname{Vol}(\bbB^d) \int_{\bbR^d} \langle \xi \rangle^{-d-2\varepsilon} \dd^d \xi \Big)^{1/2} < \infty. 
	\end{equation}
	Consequently, $K_L \in \scrL^2(\bbR^{2d})$. 
	Conversely, if $a$ satisfies an estimate $|a(x,\xi)| \geq c\langle \xi \rangle^{-d/2}$ for some $c>0$ and all $x\in U \subset \bbR^d$ ($U$ open and nonempty, and $\xi$ sufficiently large and in some cone), $\lVert a \rVert_{\scrL^2(\bbR^{2d})} = \infty$. So, for somewhere elliptic $a$, the constraint $s<-d/2$ is necessary. 
	
	Given a finite open cover $\calU$, $M = \cup_{U\in \calU} U$, of $M$ by coordinate charts $x_U^\bullet:U\to \bbR^d$, we can write 
	\begin{equation}
	K_L(x,y) = E(x,y) + \sum_{U\in \calU} x_U^{\bullet *} K_L^{(U)} (x_U^{\bullet-1})^*
	\label{eq:local_decomposition} 
	\end{equation}
	for some $E\in \scrC^\infty(M\times M)$ and Schwartz kernels $K_L^{(U)}$ of elements of $\Psi_{\mathrm{c}}^s(x_U^{\bullet*}(U))$ \cite[Theorem 5.25]{HintzNotes}. If $s<-d/2$, each term in \cref{eq:local_decomposition} is in $\scrL^2(M\times M)$, so we conclude that $K_L \in \scrL^2(M\times M)$ as well. Conversely, if $\smash{K_L^{(U)}}$ is not in $\scrL^2(\bbR^{2d})$, then neither will $K_L$ be.
\end{proof}

\begin{theorem}
	\label{thm:main}
	If the standard deviations of our Gaussian distributions $\gamma_n \sim \operatorname{N}(\mu_n,\sigma_n)$ satisfy the polynomial asymptotics 
	\begin{equation} 
	0 < \liminf_{n\to \infty} (1+n)^\varsigma \sigma_n \leq \sup_{n\in \bbN} (1+n)^\varsigma \sigma_n < \infty, 
	\label{eq:zl}
	\end{equation} 
	and in addition $\{\mu_n\}_{n=0}^\infty \in h^{\varsigma-1/2}(\bbN)$, 
	then the Sobolev wavefront set of the random distribution $\Phi:\Omega\to \scrD'(M)$, \cref{eq:Phi}, is given by 
	\[
	\operatorname{WF}^s(\Phi(\omega)) = 
	\begin{cases}
	\varnothing & (s<d(\varsigma-1/2)) \\
	\bbS^*M & (s\geq d(\varsigma-1/2)) 
	\end{cases}
	\]
	for $\bbP$-almost all $\omega \in \Omega$. 
\end{theorem}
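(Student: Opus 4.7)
The easy direction, $\operatorname{WF}^s(\Phi)=\varnothing$ for $s<d(\varsigma-1/2)$, is immediate from \Cref{global_regularity_sharp}: choosing $\varepsilon>0$ with $s<d(\varsigma-1/2-\varepsilon)$ places $\Phi$ in $\scrH^s(M)$ almost surely, which kills the $\scrH^s$-wavefront set. All the real work lies in showing $\operatorname{WF}^s(\Phi)=\bbS^*M$ almost surely for $s\geq d(\varsigma-1/2)$.

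My plan for this is a countable reduction followed by a single application of Sazonov. First I would fix a countable basis $\{U_i\}_{i\in \bbN}$ for the topology of $\bbS^*M$ and, for each $i$, a cutoff $\chi_i\in \scrC^\infty(\bbS^*M)$ with $\operatorname{supp}\chi_i\subseteq U_i$ and $\chi_i\not\equiv 0$, and set $L_i=\operatorname{Op}(\chi_i)(1+\triangle_g)^{s/2}\in \Psi^s_{\mathrm{cl}}(M)$. A standard microlocal parametrix argument -- if $\operatorname{WF}^s(\Phi)^{\mathrm{c}}\neq\varnothing$, pick a point $p$ in it and, via \cref{eq:lk}, an elliptic cutoff $\chi$ at $p$ with $\operatorname{Op}(\chi)(1+\triangle_g)^{s/2}\Phi\in \scrL^2$; then choose a basis element $U_i\ni p$ small enough to sit inside $\chi^{-1}(\bbC\backslash\{0\})$ and conclude $L_i\Phi\in \scrL^2$ from the identity $\operatorname{Op}(\chi_i)=\operatorname{Op}(\chi_i)Q\operatorname{Op}(\chi)+R$ with $Q$ a microlocal parametrix of $\operatorname{Op}(\chi)$ and $R$ smoothing -- establishes
\begin{equation}
\operatorname{WF}^s(\Phi)=\bbS^*M \iff L_i\Phi\notin \scrL^2(M)\text{ for every } i\in \bbN.
\label{eq:plan_reduction}
\end{equation}
Thus the theorem is reduced to the assertion that, for each fixed $i$, $L_i\Phi\notin \scrL^2(M)$ almost surely.

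This assertion I would handle with \Cref{Sazonov}. That proposition is stated for centered Gaussians, so I first extend it: if $L_i\Phi\in \scrL^2$ with positive probability, then, on the product space, two independent copies $\Phi_{\mathrm L},\Phi_{\mathrm R}$ satisfy $L_i\Phi_{\mathrm L},L_i\Phi_{\mathrm R}\in \scrL^2$ with positive probability, hence so does $L_i(\Phi_{\mathrm L}-\Phi_{\mathrm R})$; since the latter is equidistributed with $\sqrt 2\, L_i\Phi_0$, where $\Phi_0=\Phi-\sum_n\mu_n\phi_n$ is the centered part, \Cref{Sazonov} applied to $\Phi_0$ then forces $L_i\circ \Sigma|_{h_{\bmsigma}}$ to be Hilbert-Schmidt; the contrapositive is what I need. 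Under the hypothesis \cref{eq:zl}, $h_{\bmsigma}(\bbN)=h^\varsigma(\bbN)$ as Hilbert spaces up to equivalent norms, and \Cref{sequence_Sobolev_correspondence} identifies this isomorphically with $\scrH^{d\varsigma}(M,g)$ via $\Sigma$. Hence $L_i\circ \Sigma|_{h_{\bmsigma}}$ is HS if and only if
\[
L_i(1+\triangle_g)^{-d\varsigma/2}=\operatorname{Op}(\chi_i)(1+\triangle_g)^{(s-d\varsigma)/2}\in \Psi^{s-d\varsigma}_{\mathrm{cl}}(M)
\]
is HS on $\scrL^2(M,g)$. Since $\chi_i\not\equiv 0$, this operator is somewhere elliptic, so by \Cref{HS_criterion} the HS property holds iff $s-d\varsigma<-d/2$. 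Under the standing assumption $s\geq d(\varsigma-1/2)$ this fails, and the extended Sazonov delivers $L_i\Phi\notin \scrL^2$ almost surely; intersecting over the countable family and invoking \cref{eq:plan_reduction} finishes the proof.

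The main obstacles are the two bureaucratic steps above: the countable reduction \cref{eq:plan_reduction} with its microlocal parametrix argument, and the extension of \Cref{Sazonov} past the centered setting in which it is stated. Neither is conceptually deep; once they are in place, the entire theorem turns on the single pseudodifferential order count placing $s-d\varsigma$ relative to $-d/2$, which locates the sharp threshold exactly at $s=d(\varsigma-1/2)$.
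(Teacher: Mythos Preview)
Your proof is correct and follows essentially the same strategy as the paper's: a countable reduction to single microlocal cutoffs, then \Cref{Sazonov} together with \Cref{HS_criterion} to locate the threshold at $s-d\varsigma=-d/2$. The only cosmetic differences are that the paper handles the means by subtracting $\sum_n\mu_n\phi_n\in\scrH^{d(\varsigma-1/2)}(M)$ at the outset (rather than symmetrizing on a product space as you do) and places $(1+\triangle_g)^{s/2}$ to the left of $\operatorname{Op}(\chi)$ rather than to the right; both variants are already noted as equivalent in the paper (cf.\ \cref{eq:wavefront_definition_two}--\cref{eq:wavefront_definition_three}).
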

Compare with the global result \Cref{global_regularity_sharp}.
\begin{proof}
	We already verified in \Cref{global_regularity} that almost surely $\Phi \in \scrH^{d(\varsigma-1/2)-\varepsilon}(M)$ for any $\varepsilon>0$, which immediately implies that for any $s<d(\varsigma-1/2)$, 
	\begin{equation}
	\operatorname{WF}^s(\Phi(\omega)) = \varnothing \text{ for $\bbP$-almost all $\omega\in\Omega$.}
	\end{equation}
	It remains to prove that for $s\geq d(\varsigma-1/2)$, $\operatorname{WF}^s(\Phi) = \bbS^*M$ almost surely.

	We first reduce to the case $\mu_0,\mu_1,\mu_2,\ldots = 0$ --- letting $\tilde{\gamma}_n$ denote the centered Gaussian $\tilde{\gamma}_n= (\gamma_n - \mu_n)$,  we can write 
	\begin{equation}
	\Phi(\omega) = \sum_{n=0}^\infty \mu_n \phi_n + \sum_{n=0}^\infty \tilde{\gamma}_n \phi_n , 
	\end{equation}
	and by \Cref{sequence_Sobolev_correspondence} the first sum is in $\scrH^{d(\varsigma-1/2)}(M)$. Letting $\tilde{\Phi}(\omega)$ denote the second sum, this implies that 
	\begin{equation}
	\operatorname{WF}^s(\Phi(\omega)) = \operatorname{WF}^s(\tilde{\Phi}(\omega)) \text{ for }s \leq d(\varsigma-1/2).
	\end{equation}
	Since $\operatorname{WF}^s(\Phi(\omega)) \supseteq \operatorname{WF}^{d(\varsigma-1/2)}(\Phi(\omega))$ for all $s\geq d(\varsigma-1/2)$ (as follows immediately from the definition), if $\operatorname{WF}^{d(\varsigma-1/2)}(u) = \bbS^*M$ almost surely then for any $s\geq d(\varsigma-1/2)$, 
	\begin{equation}
	\operatorname{WF}^s(\Phi(\omega)) = \bbS^*M \text{ for $\bbP$-almost all $\omega\in\Omega$}
	\end{equation}
	as well.
	It therefore suffices to consider the case $\mu_0,\mu_1,\mu_2,\ldots = 0$, as claimed. 
	
	Given our hypothesis \cref{eq:zl}, $\sigma_n>0$ for sufficiently large $n\in \bbN$. We may assume without loss of generality that  $\sigma_n>0$ for all $n\in \bbN$, so that for $\bmsigma = \{\sigma_n\}_{n=0}^\infty$, $h_{\bmsigma}(\bbN)$ is well-defined. (The general case is trivially reduced to this one.)
	
	From the discussion above, we can write 
	\begin{equation}
	\operatorname{WF}^s(\Phi(\omega)) = \bbS^*M \iff (1+\triangle_g)^{s/2}\operatorname{Op}(\chi) \Phi(\omega) \notin \scrL^2(M) \text{ for all }0\neq \chi \in C^\infty(\bbS^*M). 
	\label{eq:condition} 
	\end{equation} 
	More generally, if $\chi$ is supported in some open neighborhood $U$ of $\bbS^*M$, then $(1+\triangle_g)^{s/2}\operatorname{Op}(\chi) \Phi(\omega) \notin \scrL^2(M) \Rightarrow \operatorname{WF}^s(\Phi(\omega)) \cap U \neq \varnothing$. 
	We can choose a countable collection $\{\chi_m\}_{m\in \bbN} \subset C^\infty(\bbS^*M)$ of nonzero smooth functions such that given any nonempty open neighborhood $U\subseteq \bbS^*M$ there is some $m\in \bbN$ such that $\chi_m$ is supported within $U$. If the right-hand side of \cref{eq:condition} holds for all $\chi = \chi_m$, then we conclude that the left-hand side is dense in $\bbS^*M$. Since -- by construction -- $\operatorname{WF}^s(\Phi(\omega))$ is an intersection of closed sets and therefore closed, we conclude that $\operatorname{WF}^s(\Phi(\omega)) = \bbS^*M$. 
	
	The upshot is that 
		\begin{equation}
	\operatorname{WF}^s(\Phi(\omega)) = \bbS^*M \iff (1+\triangle_g)^{s/2}\operatorname{Op}(\chi_m) \Phi(\omega) \notin \scrL^2(M) \text{ for all }m \in \bbN. 
	\label{eq:condition_improved} 
	\end{equation} 
	If we can prove that 
	\begin{itemize}
		\item for each fixed $m$,  $(1+\triangle_g)^{s/2}\operatorname{Op}(\chi_m) \Phi(\omega) \notin \scrL^2(M)$ for $\bbP$-almost all $\omega\in \Omega$,
	\end{itemize}
	 then we can conclude (by the countable additivity of $\bbP$) that 
	 \begin{itemize}
	 	\item 
	 	for $\bbP$-almost all $\omega \in \Omega$, $(1+\triangle_g)^{s/2}\operatorname{Op}(\chi_m) \Phi(\omega) \notin \scrL^2(M)$ for all $m \in \bbN$. 
	 \end{itemize}
 By \cref{eq:condition_improved}, this implies  that $\operatorname{WF}^s(\Phi(\omega)) = \bbS^*M$ for $\bbP$-almost all $\omega \in \Omega$, hence the result.  
 
 By \Cref{Sazonov} and the Kolmogorov 0-1 law (which applies because $\Psi(X)C^\infty(X)=C^\infty(X)$), the first of these statements is equivalent to the failure of the composition 
	\begin{equation} 
	(1+\triangle_g)^{s/2}\operatorname{Op}(\chi_m)\circ 	\Sigma :h_{\bmsigma}(\bbN) \to \scrD'(M) 
	\label{eq:j923f}
	\end{equation} 
	to be a Hilbert-Schmidt map into $\scrL^2(M,g)$. Given \cref{eq:zl},   $h_{\bmsigma}(\bbN)=h^{\varsigma}(\bbN)$ at the level of sets, and clearly \cref{eq:j923f} fails to  be a Hilbert-Schmidt map into $\scrL^2(M)$ if and only if 
		\begin{equation} 
	(1+\triangle_g)^{s/2}\operatorname{Op}(\chi_m)\circ 	\Sigma :h^{\varsigma}(\bbN) \to \scrD'(M) 
	\label{eq:j923g}
	\end{equation} 
	fails to be a Hilbert-Schmidt map into $\scrL^2(M,g)$. 
	But recall  that $\Sigma|_{h^{\varsigma}(\bbN)}:h^{\varsigma}(\bbN)\to \scrH^{d\varsigma}(M,g)$ is an equivalence of Banach spaces, so -- since the Hilbert-Schmidt operators constitute an operator ideal (see \cite[Theorem 3.8.2]{Simon2015operator}) -- \cref{eq:j923g} fails to be a Hilbert-Schmidt map into the space $\scrL^2(M,g)$ if and only if 
	\begin{equation} (1+\triangle_g)^{s/2} \operatorname{Op}(\chi_m) : \scrH^{d\varsigma}(M,g) \to \scrD'(M)
	\label{eq:j923h}
	\end{equation} 
	fails to be a Hilbert-Schmidt map into $\scrL^2(M,g)$. 
	And since $(1+\triangle_g)^{d\varsigma/2}$ defines an isomorphism $\scrH^{d\varsigma}(M,g)\to \scrL^2(M,g)$, \cref{eq:j923h} fails to be a Hilbert-Schmidt map into $\scrL^2(M)$ if and only if 
	\begin{equation} (1+\triangle_g)^{s/2} \operatorname{Op}(\chi_m) (1+\triangle_g)^{-d\varsigma/2}:\scrL^2(M,g) \to \scrD'(M)  \label{eq:j923j}
	\end{equation} 
	fails to be a Hilbert-Schmidt map into $\scrL^2(M,g)$. 
	By the commutativity of the principal symbol map, the pseudodifferential operator \cref{eq:j923j} is microlocally speaking somewhere elliptic of order $s-d\varsigma$, $\operatorname{Ell}^{s-d\varsigma}((1+\triangle_g)^{s/2} \operatorname{Op}(\chi_m) (1+\triangle_g)^{-d\varsigma/2}) \neq \varnothing$.

	We can now appeal to \Cref{HS_criterion} to conclude that the members of the preceding family of maps fail to be Hilbert-Schmidt maps into $\scrL^2(M,g)$ if and only if $s-d\varsigma \geq -d/2$, that is if and only if $s\geq d(\varsigma-1/2)$. 
\end{proof}

\begin{proof}[Proof of \Cref{thm:prelim}]
	We know that $\Gamma$ is the law of Gaussian noise with $\sigma_n = (1+\lambda_n)^{-1/2}$. By Weyl's law,  $\sigma_n$ obeys polynomial asymptotics with $\varsigma = 1/d$ in \cref{eq:zl}. \Cref{thm:main} therefore gives the desired result.
\end{proof}

\section*{Acknowledgments}

This research was partially supported by a Hertz fellowship. 

The problem of the microlocal (ir)regularity of random distributions sprung out of a somewhat related question asked by my advisor, Peter Hintz, during my qualifying exam,  and it goes without saying that I am indebted to his comments and guidance. 

In addition, I express my gratitude to the anonymous reviewer for their comments.

\printbibliography

\end{document}